\newcommand{\lk}[2]{{\rm lk}_{#1}(#2)}
\newcommand{\st}[2]{{\rm st}_{#1}(#2)}
\newtheorem{Lemma}{Lemma}[section]
\newtheorem{Theorem}[Lemma]{Theorem}
\newtheorem{Proposition}[Lemma]{Proposition}
\newtheorem{Corollary}[Lemma]{Corollary}
\newtheorem{definition}[Lemma]{Definition}
\newtheorem{Question}[Lemma]{Question}
\def\t{\tau}
\def\s{\sigma}
\def\p{\partial}
\def\D{\Delta}
\def\lk{\text{\textnormal{lk}}}
\def\st{\text{\textnormal{st}}}
\def\wrt{{\rm with respect to }}
\begin{document}

\title{Normal $4$-pseudomanifolds with a relative 2-skeleton}
\author{Biplab Basak, Mangaldeep Saha and Sourav Sarkar}
\date{}
\maketitle
\vspace{-10mm}
\noindent{\small Department of Mathematics, Indian Institute of Technology Delhi, New Delhi 110016, India.}

%\footnotetext[1]{{\em E-mail addresses:} \url{biplab@iitd.ac.in} (B.
%Basak), \url{Sourav.Sarkar@maths.iitd.ac.in} (S. Sarkar).}
\begin{center}
\date{\today}
\end{center}

\hrule
\begin{abstract}
The study of face-number-related invariants in simplicial complexes is a central topic in combinatorial topology. Among these, the invariant $g_2$ plays a significant role. For a normal $d$-pseudomanifold $K$ ($d \geq 3$), it is known that $g_2(K) \geq g_2(\lk(v, K))$ for every vertex $v$. If $K$ has at most two singularities and satisfies $g_2(K) = g_2(\lk(t, K))$ for a singular vertex $t$, then $g_3(K) \geq g_3(\lk(t,K))$ holds. A normal $d$-pseudomanifold $K$ is called $g_2$- and $g_3$-optimal if $g_2(K) = g_2(\lk (t,K))$ and $g_3(K) = g_3(\lk (t,K))$ for a singular vertex $t$.

In this article, we establish structural results for normal $4$-pseudomanifolds under $g_2$- and $g_3$-optimality conditions. We show that if $K$ is a normal $4$-pseudomanifold with exactly one singular vertex $t$ and is $g_2$- and $g_3$-optimal at $t$, then $K$ can be obtained from boundary complexes of $5$-simplices through a sequence of operations of types vertex foldings and connected sums. When $K$ has exactly two singularities and is $g_2$- and $g_3$-optimal at one singular vertex, it is derived from the boundary complexes of $4$-simplices through a sequence of operations of types one-vertex suspensions, vertex foldings, and connected sums. Alternatively, we prove that if $K$ has two singular vertices and is $g_2$- and $g_3$-optimal at one of them, then it arises from boundary complexes of $5$-simplices through a sequence of operations of types vertex foldings, edge foldings, and connected sums.
\end{abstract}

\noindent {\small {\em MSC 2020\,:}  Primary 05E45; Secondary 05C30, 57Q15, 57Q25.
	
	\noindent {\em Keywords:} Normal pseudomanifolds, $f$-vector, edge folding, vertex foling.}

\section{Introduction}
The \( g \)-conjecture \cite{McMullen1971} offers a detailed classification of \( f \)-vectors of simplicial polytopes and investigates which \( f \)-vectors could arise in other types of simplicial complexes. It was shown in \cite{Stanley} that every component of the \( g \)-vector associated with a simplicial \( d \)-polytope is non-negative. The conjecture’s sufficiency conditions for the \( f \)-vectors of simplicial polytopes and convex simplicial polytopes were rigorously established in \cite{Billera1980, Billera1981}. This naturally leads to broader questions about the geometric arrangement of general simplicial complexes in relation to their \( g \)-values. Notably, the condition \( g_1(\D) = 0 \) is satisfied if and only if \( \D \) is a simplex.  

Significant classification results have been derived by analyzing the third component, \( g_2 \), of the \( g \)-vector. Studies on \( g_2 \) of pseudomanifolds with isolated singularities are discussed in \cite{NovikSwartz2012}, while \cite{NovikSwartz2020} explores \( g \)-vectors of manifolds with boundary. In \cite{Swartz2008}, Swartz proved that the number of combinatorial manifolds, up to PL-homeomorphism, of a given dimension $d$ with an upper bound on $g_2$, is finite. Moreover, various structural results concerning homology manifolds and normal pseudomanifolds can be explored in \cite{Swartz2009}. The Lower Bound Theorem (LBT), formulated by Barnette \cite{Barnette1, Barnette2}, asserts that the boundary complex of a simplicial polytope of dimension  $(d+1)$ or, more generally, for any finite triangulation of a connected, compact \( d \)-manifold without boundary where \( d \geq 2 \), the inequality \( g_2(\D) \geq 0 \) holds. Walkup \cite{Walkup} further demonstrated that any triangulation of a 3-manifold with \( g_2 \leq 9 \) must be a triangulated 3-sphere, and when \( g_2 = 0 \), the structure is necessarily stacked. Moreover, the upper bound \( g_2 \leq 9 \) for triangulated 3-spheres is sharp, as certain triangulations of 3-dimensional sphere bundles (both orientable and non-orientable) achieve \( g_2 = 10 \). Additionally, the non-negativity of \( g_2(\D) \) has been verified for all normal pseudomanifolds of dimension at least three \cite{BagchiDatta, Fogelsanger, Kalai}. Furthermore, for \( d \geq 3 \), it has been established that the only normal \( d \)-pseudomanifolds for which \( g_2 = 0 \) are the stacked \( d \)-spheres (cf. \cite{Kalai, Tay}). 

Since Kalai’s work, significant efforts have been made to classify normal pseudomanifolds with small \( g_2 \).  Nevo and Novinsky classified such complexes for \( g_2 = 1 \) \cite{NevoNovinsky}, while Zheng extended the classification to \( g_2 = 2 \) \cite{Zheng}. In both cases, all resulting complexes are polytopal spheres.  

Basak and Swartz introduced the concept of a normal pseudomanifold \( K \) with relatively minimal \( g_2 \), meaning \( g_2(K) = g_2(\lk(v, K)) \) for some vertex \( v \) in \( K \) \cite{BasakSwartz}. In the same work, they introduced the notions of vertex folding and edge folding and characterized normal 3-pseudomanifolds with relatively minimal \( g_2 \) when \( K \) has at most two singularities. In particular, they classified normal 3-pseudomanifolds for \( g_2 = 3 \).  

Later, Basak and Gupta extended this classification to normal 3-pseudomanifolds with \( g_2 \leq 5 \) \cite{BasakGupta}. They proved that any such pseudomanifold is either a triangulation of a sphere or a suspension of \( \mathbb{RP}^2 \). Further progress was made by Basak, Gupta, and Sarkar, who provided a combinatorial characterization of normal 3-pseudomanifolds with relatively minimal \( g_2 \) when the complex has either three singular vertices, including one \( \mathbb{RP}^2 \)-singularity, or four singular vertices, including two \( \mathbb{RP}^2 \)-singularities \cite{BGS24}. In a subsequent work \cite{BGS23}, they characterized normal 3-pseudomanifolds \( K \) satisfying \( g_2(K) \leq g_2(\lk(v, K)) + 9 \) for some vertex \( v \), focusing on cases where \( K \) has either a single singularity or two singularities, at least one of which is an \( \mathbb{RP}^2 \)-singularity.  

In contrast, relatively little work has been done on normal \( d \)-pseudomanifolds for \( d \geq 4 \) in relation to \( g_2 \). Basak and Sarkar recently classified homology 4-manifolds \( K \) satisfying \( g_2(K) \leq 5 \) \cite{BS24}. In this paper, we focus on the combinatorial characterization of normal 4-pseudomanifolds with relatively minimal \( g_2 \).  

For a normal \( d \)-pseudomanifold \( K \) (\( d \geq 3 \)), it is known that \( g_2(K) \geq g_2(\lk(v, K)) \) for every vertex \( v \). Moreover, if \( K \) has at most two singularities and satisfies \( g_2(K) = g_2(\lk(t, K)) \) for a singular vertex \( t \), then the inequality \( g_3(K) \geq g_3(\lk(t, K)) \) holds. A normal \( d \)-pseudomanifold \( K \) is said to be \( g_2 \)- and \( g_3 \)-optimal if it satisfies \( g_2(K) = g_2(\lk(t, K)) \) and \( g_3(K) = g_3(\lk(t, K)) \) for some singular vertex \( t \).  In this paper, we establish structural results for normal 4-pseudomanifolds under \( g_2 \)- and \( g_3 \)-optimality conditions.  The main results of our article are the following.

\begin{Theorem}\label{dim 4 1-sing main}
Let  $K$ be a normal $4$-pseudomanifold with exactly one singular vertex $t$, and assume that $K$ is $g_2$- and $g_3$-optimal with respect to $t$. Then, $K$ is obtained from the boundary complexes of $5$-simplices by a sequence of operations consisting of vertex foldings and connected sums
\end{Theorem}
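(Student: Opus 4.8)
The plan is to analyze the local structure at the singular vertex and then dismantle $K$ by reversing the two building operations. Write $L=\lk(t,K)$ and let $B$ denote the antistar of $t$, i.e.\ the subcomplex of faces of $K$ missing $t$, so that $K=B\cup(t\ast L)$ with $B\cap(t\ast L)=L$. First I would record that, because $t$ is the \emph{only} singular vertex, every $v\neq t$ has $\lk(v,K)\cong S^3$; hence each vertex of $L$ satisfies $\lk_L(v)=\lk(\{t,v\},K)\cong S^2$, so $L$ is a closed connected combinatorial $3$-manifold, and $L\neq S^3$ since $t$ is singular. By Walkup's theorem this already forces $g_2(L)\geq 10$, so the entire value $g_2(K)=g_2(L)$ is concentrated at $t$; this is the numerical signature that all of the ``extra'' structure of $K$ lives in the star of $t$.

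Next I would convert the two optimality equalities into a rigid combinatorial description of $B$ --- this is where the relative $2$-skeleton of the title enters. Using the rigidity and relative-$g$ identities established earlier in the paper, $g_2(K)-g_2(L)$ equals a nonnegative relative quantity measuring the ``independent'' edges of $B$ relative to $L$, while $g_3(K)-g_3(L)$ controls the next relative stratum; the simultaneous vanishing forced by $g_2$- and $g_3$-optimality should pin $B$ down as a \emph{relatively stacked} ball over $L$, i.e.\ $B$ carries no faces beyond those dictated by a stacked triangulation relative to its boundary. Concretely I expect this to produce a dichotomy: either $K$ contains a separating missing facet (the boundary of a $4$-simplex all of whose proper faces, but not itself, lie in $K$, and which separates $K$), or no such facet exists.

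With this structure in hand I would induct on the number of facets. If $K$ has a separating missing facet, write $K=K_1\#K_2$, where the singular vertex lies in one summand $K_1$; since $g_2$ is additive under connected sum and $g_2(K_1)\geq g_2(\lk(t,K_1))=g_2(L)$, the equality $g_2(K)=g_2(L)$ forces $g_2(K_2)=0$, so $K_2$ is a stacked $4$-sphere and optimality descends to $K_1$, whence the inductive hypothesis rebuilds $K_1$ and therefore $K$. If $K$ is prime, the relative structure above should exhibit $t$ as a single folded vertex: separating $t$ into two vertices $u_1,u_2$ yields a normal $4$-pseudomanifold $\widehat K$, and $g_2$-optimality gives $g_2(\widehat K)=g_2(K)-g_2(L)=0$, so by Kalai's theorem $\widehat K$ is a stacked $4$-sphere, i.e.\ a connected sum of copies of $\partial\Delta^5$; then $K$ is recovered as the vertex folding of $\widehat K$, closing the recursion.

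The main obstacle is the middle step: converting the purely numerical equalities $g_2(K)=g_2(L)$ and $g_3(K)=g_3(L)$ into the combinatorial assertion that $B$ is relatively stacked and, above all, that the resulting vertex split $\widehat K$ is a \emph{bona fide} normal $4$-pseudomanifold --- connected vertex links, no spurious new singular vertices --- with $g_2(\widehat K)=0$. This is delicate because it requires the minimality of the relative $2$-skeleton to guarantee that $L$ separates correctly along the two $3$-spheres $\lk(u_1),\lk(u_2)$ being un-identified and that no edge or triangle through $t$ obstructs the un-folding. The accompanying bookkeeping --- that $g_2$- and $g_3$-optimality are inherited by the connected summands and by $\widehat K$, so that the induction genuinely closes --- is the part I expect to demand the most care.
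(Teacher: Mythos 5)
Your proposal follows the same overall skeleton as the paper (reduce to a rigid structure on the antistar of $t$, then induct by splitting off connected sums and unfolding at $t$), but the prime case contains a genuine quantitative error that breaks the recursion. A vertex unfolding at $t$ changes $g_2$ by exactly $\binom{d+1}{2}=10$ in dimension $4$ (equation (\ref{folding g2})), not by $g_2(L)$. So a single unfolding gives $g_2(\widehat K)=g_2(K)-10$, which is $0$ only in the special case $g_2(\lk (t,K))=10$; in general $t$ remains singular in $\widehat K$ and the unfolding must be iterated. Your claim ``$g_2(\widehat K)=g_2(K)-g_2(L)=0$, so by Kalai's theorem $\widehat K$ is a stacked sphere, closing the recursion'' is therefore false whenever $g_2(L)>10$. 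The paper instead runs a genuine induction on $g_2$ and the number of vertices, applying vertex unfoldings as long as $t$ stays singular, and when $t$ finally becomes nonsingular it invokes a separate lemma (Lemma \ref{4-manifold}) that a $g_2$- and $g_3$-optimal $4$-manifold is a stacked sphere --- note that at that moment $g_2$ need not vanish, since a $3$-sphere link can have positive $g_2$, so Kalai's $g_2=0$ theorem alone cannot finish. A second, related misstep: you describe the unfolding as ``separating $t$ into two vertices $u_1,u_2$,'' but in a vertex folding at $t$ the admissible bijection \emph{fixes} $t$ and doubles the other four vertices of the missing facet $t\star\tau$; what gets un-identified is $\{a,b,c,d\}\mapsto\{a',b',c',d'\}$, with $\lk (t,\D)$ recognized as a handle addition $S^{\psi}$ (this is the content of Lemma \ref{4-dim vertex folding}).

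Two further gaps. First, the missing facets produced by the relative-$2$-skeleton condition are precisely of the form $t\star\tau$, hence contain $t$; in the connected-sum branch $t$ is then an \emph{identified} vertex lying in both summands, so your assertion that ``the singular vertex lies in one summand $K_1$'' with $g_2(K_2)=0$ does not apply to these facets. The correct descent is Proposition \ref{Proposition invariant}$(iii)$: both summands are $g_2$- and $g_3$-optimal at the identified vertex, and each has either smaller $g_2$ or fewer vertices, which is why the induction is on both parameters. Second, the step you flag as the main obstacle --- converting the two optimality equalities into the combinatorial statement about the antistar --- is exactly what the paper's Lemmas \ref{G(D)=G(st t)} and \ref{2-skeletn same} supply: generic rigidity forces $Skel_{1}(\D)=Skel_{1}(\st (t,\D))$ and a short $f$-vector count using $g_3$-optimality upgrades this to $Skel_{2}(\D)=Skel_{2}(\st (t,\D))$. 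Crucially, this holds not for $K$ itself but for a complex $\D$ obtained from $K$ by reversing facet subdivisions (handling vertices outside $\st (t,K)$, whose links are stacked spheres by Proposition \ref{vertex outside star}); your proposal omits this reduction entirely, and without it the claimed structure of the antistar $B$ of $K$ is simply not available.
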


\begin{Theorem}\label{dim 4 2-sing main1}
Let $K$ be a normal $4$-pseudomanifold with exactly two singularities, and suppose that $K$ is $g_2$- and $g_3$-optimal with respect to a singular vertex. Then, $K$ is obtained from the boundary complex of $4$-simplices through a sequence of operations consisting of one-vertex suspensions, vertex foldings, and connected sums.
\end{Theorem}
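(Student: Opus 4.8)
The plan is to induct on $g_2(K)$, proving the slightly more general statement that every normal $4$-pseudomanifold with at most two singular vertices that is $g_2$- and $g_3$-optimal (at a singular vertex, when one exists) is obtained from boundary complexes of $4$-simplices by one-vertex suspensions, vertex foldings, and connected sums. The base case $g_2(K)=0$ is Kalai's theorem: $K$ is then a stacked $4$-sphere, hence an iterated connected sum of copies of $\partial\Delta^5=\mathrm{Susp}_w(\partial\Delta^4)$, which is already of the required form. Throughout I fix a distinguished singular vertex $t$, write $L=\lk(t,K)$ (a normal $3$-pseudomanifold that is not a $3$-sphere), and let $t'$ denote the second singular vertex when it is present.

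For the inductive step ($g_2(K)>0$) I would first exploit that connected sum is additive on $g_2$ and $g_3$: I look for a missing facet exhibiting $K=K_1\# K_2$, so that each $K_i$ has strictly smaller $g_2$, at most two singularities, and inherits optimality. Induction --- together with Theorem~\ref{dim 4 1-sing main} for a piece carrying a single singularity and the sphere classification for a piece carrying none --- then finishes this branch, so I may assume $K$ is prime. Here the equality $g_2(K)=g_2(L)$ forces tightness in the rigidity inequality behind the bound $g_2(K)\ge g_2(L)$, which I expect to mean that, off $\st(t,K)$, the graph of $K$ supports no affine stress beyond those coming from $L$; the further equality $g_3(K)=g_3(L)$ pushes this control up to the $2$-faces, the level genuinely needed in dimension $4$ and absent from the dimension-$3$ theory.

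The crux is to read off from this rigidity that the last operation used to build a prime $K$ was either a vertex folding or a one-vertex suspension, and to recognize which. Using normality --- a singular edge forces both endpoints singular, so $\{t,t'\}$ is the only candidate singular edge --- I would split into the cases $\{t,t'\}\notin K$, $\{t,t'\}\in K$ nonsingular, and $\{t,t'\}\in K$ singular, and argue that tightness forces $\lk(t,K)\cong\lk(t',K)$ and the join-type gluing $(w*\mathrm{ast}_M(w))\cup(t'*M)$ characteristic of a one-vertex suspension $\mathrm{Susp}_w(M)$ of a normal $3$-pseudomanifold $M$, with $t,t'$ as the two poles; when only one singularity is active I would instead detect a foldable configuration and reverse a vertex folding. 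In either case the predecessor has strictly smaller $g_2$ and retains the required hypotheses, so the inductive hypothesis applies, and reassembling yields the claimed sequence. The identity $\mathrm{Susp}_w(\partial\Delta^4)=\partial\Delta^5$ is what reconciles this with Theorem~\ref{dim 4 1-sing main} and with the alternative reconstruction from $\partial\Delta^5$ using edge foldings.

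The step I expect to be the main obstacle is precisely this recognition of the one-vertex suspension: showing the two singularities are genuinely paired as poles rather than interacting through a more complicated identification, and that no stray faces obstruct writing $K=\mathrm{Susp}_w(M)$. This is where both optimality hypotheses are indispensable --- $g_2$ to control the edges between the two singular regions and $g_3$ to control the triangles joining them --- and where one must invoke the equality case of the Lower Bound Theorem in dimension $3$ to identify the cross-section $M$ and the link $L$. A secondary but unavoidable difficulty is the bookkeeping: verifying that each reduction strictly decreases $g_2$ while preserving the at-most-two-singularities condition and the optimality, so that the inductive hypothesis genuinely applies to the smaller complex.
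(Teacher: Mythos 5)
Your high-level architecture (induct on $g_2$, split off connected sums, reverse vertex foldings, recognize a one-vertex suspension in the terminal case) does match the paper's, but your proposal leaves unproved precisely the steps that carry the argument, and the mechanism you gesture at for them is not the one that works. The paper never argues via ``no affine stress off the star'' at the level of $2$-faces. Instead, the $g_2$-equality is first converted (Lemma \ref{G(D)=G(st t)}, via reversing facet subdivisions) into a complex $\D$ with $Skel_1(\D)=Skel_1(\st (t,\D))$, and then the $g_3$-equality is used \emph{purely numerically}: since $f_0$ and $f_1$ of $\D$ and $\st (t,\D)$ already agree, $g_3(\D)=g_3(\st (t,\D))$ forces $f_2(\D)=f_2(\st (t,\D))$, i.e.\ $Skel_2(\D)=Skel_2(\st (t,\D))$ (Lemma \ref{2-skeletn same}). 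This relative $2$-skeleton condition is the missing engine in your plan: it makes $t\star\t$ a missing facet for every interior $3$-simplex $\t$ of $\D[V(\D)\setminus\{t\}]$ --- in particular for every $3$-simplex of $\lk (t_1,\D)$ avoiding $t$ that is not in $\lk (t,\D)$ --- so Proposition \ref{d-dim connected cum} and Lemma \ref{4-dim vertex folding} yield the connected-sum/vertex-folding dichotomy, and it dissolves your declared ``main obstacle'' outright: in the terminal case every facet of $\lk (t_1,\D)$ not containing $t$ lies in $\lk (t,\D)$, whence these two closed normal $3$-pseudomanifolds coincide and $\D$ is \emph{literally} the one-vertex suspension of $\lk (t,\D)$ at $t_1$ (Step 3 of Lemma \ref{4 dim npm up to one-vertex suspension}); no primality reduction, no case analysis on the edge $tt_1$, and no tightness-of-stresses argument is needed.

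The second genuine gap is the endgame. Your appeal to ``the equality case of the Lower Bound Theorem in dimension 3'' cannot identify the cross-section: $M=\lk (t,\D)$ is a singular normal $3$-pseudomanifold with $g_2(M)>0$ in general, so LBT-equality (stackedness) says nothing about it, and your induction on $g_2$ of $4$-pseudomanifolds never reaches the $3$-dimensional complex $M$ at all. What is actually needed, and what the paper uses, is twofold: Lemma \ref{one vertex suspension g2-minimal}, an $f$-vector computation for one-vertex suspensions showing that $g_3(\D)=g_3(\lk (t,\D))$ forces $f_1(M)=f_1(\st (t_1,M))$ and hence that $M$ is $g_2$-minimal with respect to its unique singular vertex $t_1$; and then the Basak--Swartz structure theorem (Proposition \ref{3-dim npm two singularity}), which decomposes such an $M$ into boundary complexes of $4$-simplices by vertex foldings and connected sums. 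Without these two inputs the conclusion of the theorem --- building $K$ from $\p\D^4$'s rather than merely exhibiting a suspension structure --- is out of reach, so as it stands the proposal is an outline whose two load-bearing steps (producing missing facets from optimality, and structuring the $3$-dimensional cross-section) are exactly the ones left unresolved.
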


\begin{Theorem}\label{dim 4 2-sing main2}
Let  $K$ be a normal $4$-pseudomanifold with exactly two singular vertices, and assume that $K$ is $g_2$- and $g_3$-optimal with respect to a singular vertex. Then, $K$ is obtained from some boundary complexes of $5$-simplices by a sequence of operations consisting of vertex foldings, edge foldings, and connected sums.
\end{Theorem}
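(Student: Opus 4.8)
The plan is to deduce Theorem~\ref{dim 4 2-sing main2} from Theorem~\ref{dim 4 2-sing main1} by re-encoding the construction that the latter provides. By Theorem~\ref{dim 4 2-sing main1}, our $K$ is obtained from copies of $\partial\Delta^{4}$ (the boundary of the $4$-simplex, a $3$-sphere) through a sequence of one-vertex suspensions, vertex foldings, and connected sums. Since vertex folding and connected sum preserve dimension while $\partial\Delta^{4}$ is $3$-dimensional, every branch of the construction must include a dimension-raising one-vertex suspension in order to produce the $4$-dimensional complex $K$, and no branch can include two, since a second would force dimension $5$. The key elementary identity is that the one-vertex suspension of $\partial\Delta^{4}$ over any of its vertices equals $\partial\Delta^{5}$. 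The strategy is therefore to transport every one-vertex suspension down to the building blocks, turning each $\partial\Delta^{4}$ into a $\partial\Delta^{5}$, and to record how the intervening vertex foldings and connected sums are transformed along the way.

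To carry this out I would first establish the behaviour of the one-vertex suspension $\Sigma_{v}$ relative to the other two operations, in the form of two lemmas. First, $\Sigma_{v}$ commutes with connected sums: if $v$ is chosen outside the facet along which $M_{1}$ and $M_{2}$ are glued, then $\Sigma_{v}(M_{1}\# M_{2})=\Sigma_{v}(M_{1})\# \Sigma_{v}(M_{2})$. Second, and crucially, $\Sigma_{v}$ converts a vertex folding into an edge folding: if $M'$ is obtained from a lower-dimensional $M$ by a vertex folding, then $\Sigma_{v}(M')$ is obtained from $\Sigma_{v}(M)$ by an edge folding. The mechanism is that the new suspension apex $w$, being joined to every vertex of $M'$, together with the singular vertex $u$ produced by the vertex folding spans an edge $\{w,u\}$ of $\Sigma_{v}(M')$ whose link is exactly the link of $u$ in $M'$, a non-sphere surface; thus $\{w,u\}$ is a singular (folded) edge and $w,u$ are its two singular endpoints. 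A bookkeeping check with links confirms the counts: a vertex folding in dimension $3$ creates one singular vertex, its one-vertex suspension creates two, matching precisely the two singular vertices produced by an edge folding of the suspended sphere. Granting these lemmas, I would reorder the construction sequence by induction on its length, at each stage moving the earliest one-vertex suspension past all preceding vertex foldings and connected sums. The effect is to apply the one-vertex suspensions directly to the $\partial\Delta^{4}$ blocks, turning them into $\partial\Delta^{5}$, while every vertex folding performed before a suspension becomes an edge folding, every vertex folding performed after a suspension remains a vertex folding, and every connected sum remains a connected sum. This exhibits $K$ as built from boundary complexes of $5$-simplices using vertex foldings, edge foldings, and connected sums, where the two regimes---two singular vertices joined by a singular edge, or lying in distinct connected summands---correspond respectively to the edge foldings and to the surviving vertex foldings.

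The principal obstacle is to prove the second lemma with full rigour: I must show that the admissibility conditions on the links that permit a vertex folding of $M$ correspond exactly to the conditions permitting the associated edge folding of $\Sigma_{v}(M)$, and that the folding produces no singular vertices beyond the prescribed two. A secondary difficulty is the reordering itself, since one-vertex suspension, vertex folding, and connected sum do not commute unconditionally; the inductive argument must verify at each stage that the intermediate complexes remain normal pseudomanifolds of the correct dimension with at most two singular vertices, and that the suspension vertex can always be selected away from the active facets and folding sites. Throughout, I would use the monotonicity $g_{2}(K)\ge g_{2}(\lk(t,K))$ and the inequality $g_{3}(K)\ge g_{3}(\lk(t,K))$ recalled in the introduction to confirm that the rewritten construction yields exactly the $g_{2}$- and $g_{3}$-optimal complexes at the singular vertex $t$, so that no complexes are gained or lost in passing between the two descriptions.
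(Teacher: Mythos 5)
Your overall strategy---deduce Theorem~\ref{dim 4 2-sing main2} from Theorem~\ref{dim 4 2-sing main1} by pushing the one-vertex suspension down to the $\partial\Delta^4$ blocks (and indeed $\Sigma_{v,u}\,\partial\Delta^4=\partial\Delta^5$, as you say)---is genuinely different from the paper's proof, but both of your commutation lemmas fail as stated, and they fail exactly in the cases that actually occur. Take the connected-sum lemma first. If the suspension vertex $v$ lies \emph{outside} the glued facet $\sigma$, then $v$ belongs to only one summand, so $\Sigma_v(M_2)$ is not even defined; and no repair exists there, because $\Sigma_{v,u}(M_1\#_\sigma M_2)$ has only the two apexes as suspension points, whereas a connected sum of two suspensions carries four apex-type vertices (for non-sphere $M_1,M_2$ the two sides have different numbers of singular vertices). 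The identity that \emph{does} hold is the opposite case $v\in\sigma$, where $\Sigma_{v,u}(M_1\#_\sigma M_2)=\Sigma_{v,u}M_1\,\#_{u\sigma}\,\Sigma_{v,u}M_2$. Even in the innocuous situation where the summand glued away from $v$ is a stacked sphere (i.e.\ $M_1\# S$ amounts to facet subdivisions of $M_1$), suspending converts each such subdivision into the stellar subdivision of the \emph{ridge} $\sigma$ in $\Sigma_{v,u}M_1$: the pair of facets $u\sigma, v\sigma$ is replaced by $\{u,v\}\star w\star\partial\sigma$, which is connected through the new vertex $w$ on both sides and hence is not a connected sum (nor a vertex or edge folding). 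So these steps cannot simply be ``moved past'' the suspension within your allowed move set.

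The vertex-folding-to-edge-folding lemma has the same defect: you pair the new apex $w$ with the \emph{folded} vertex $u$, but the conversion works only when $u$ equals the suspension vertex $v$, so that the folding edge is the apex edge. If $u\neq v$, admissibility of an edge folding at $wu$ fails outright: in $\Sigma_{v,w}M$ both $v$ and $w$ are joined to every vertex, so $P(y,v,\psi(y))$ is a length-two path avoiding $\{w,u\}$; correspondingly faces such as $vyy'$ and $v\psi(y)\psi(y')$ would be merged without being prescribed by the folding, and $\Sigma_{v,w}(M^{\psi}_u)$ has \emph{three} singular vertices ($u$ and both apexes), contradicting the two-singularity hypothesis. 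The true statement---which is in effect what the paper's Lemma~\ref{4-dim edge folding} inverts---is that a vertex folding at the suspension vertex $t_1$ suspends to an edge folding along $tt_1$. The paper sidesteps your commutation program entirely: after Lemma~\ref{4 dim npm up to one-vertex suspension} it argues top-down at the $4$-dimensional level, using $Skel_2(\Delta)=Skel_2(\st(t,\Delta))$ to locate missing facets $t\star\tau$, splitting off connected sums via Proposition~\ref{d-dim connected cum}, excluding handle additions through $g_2$-minimality of $\lk(t,\Delta)$ (Proposition~\ref{handle addition does not have g2-minimal}), and recognizing edge unfoldings along $tt_1$ via Lemma~\ref{4-dim edge folding}, terminating at stacked spheres. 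Your bottom-up route might be salvageable if you first normalize $M$ so that every folding occurs at $t_1$ and every connected sum is along a facet containing $t_1$, absorbing the residual stacked summands only \emph{after} suspension as $4$-dimensional connected sums with $\partial\Delta^5$; but note that this requirement is the exact opposite of your stated plan to choose the suspension vertex ``away from the active facets and folding sites,'' so as written the proposal has a genuine gap.
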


\section{Preliminaries}
All simplices considered in this article are geometric, and all simplicial complexes are finite. We also assume that every simplicial complex contains $\emptyset$ as the only simplex of dimension $-1$. If $\sigma$ is an $n$-simplex in $\mathbb{R}^m$ for some $m$, which is the convex hull of $n+1$ affinely independent points $v_0, v_1, \dots, v_n$, then the simplex $\sigma$ is denoted by $v_0v_1\dots v_n$. These points $v_0, v_1, \dots, v_n$ are called the vertices of $\sigma$, and the set of all vertices of $\sigma$ is denoted by $V(\sigma)$. If a simplex $\tau$ is the convex hull of a subset ${v_0, \dots, v_i} \subseteq V(\sigma)$, then $\tau$ is called a face of $\sigma$, denoted by $\tau \leq \sigma$. The simplex $\tau$ may also be denoted by $\sigma - v_{i+1} \cdots v_n$. The boundary complex $\partial \sigma$ of a simplex $\sigma$ is defined as the collection of all proper faces of $\sigma$, i.e., $\{\t: \t\leq \s\hspace{.2cm}\text{and}\hspace{.2cm}\t\neq\s\}$.

If $\D$ is a simplicial complex, then its {\em geometric carrier} $|\D|$ is the union of all simplices in $\D$, equipped with the subspace topology induced from $\mathbb{R}^m$ for some $m$. A simplicial complex $\Delta$ is said to be pure if all its maximal simplices are of the same dimension. Every simplex in a simplicial complex is called a face of the complex, and the maximal faces are referred to as facets. The set of all vertices (or 0-simplices) of $\Delta$ is denoted by $V(\Delta)$. For $0\leq k\leq n$, the {\em $k$-skeleton} of $\D$, denoted by $Skel_k (\D)$, is the subcomplex consisting of all faces of dimensions up to $k$. For any subset $V'\subseteq V(\Delta)$, we define $\Delta[V']:=\{\sigma\in\Delta :V(\sigma)\subseteq V'\}$. A simplex $\sigma$ is called a {\em missing face} (or {\em missing simplex}) of $\Delta$ if $\partial \sigma\subseteq\Delta$ but $\sigma\notin\Delta$.

Two simplices $\sigma = u_0u_1\cdots u_k$ and $\tau = v_0v_1\cdots v_l$ in $\mathbb{R}^m$ for some $m\in \mathbb{N}$, are {\em independent} if $u_0,\dots ,u_k,v_0,\dots,v_l$ are affinely independent. In that case, $u_0\cdots u_kv_0\cdots v_l$ is a $(k + l + 1)$-simplex and is denoted by  $\sigma\star\tau$ or $\sigma\tau$. Two simplicial complexes $\D_1$ and $\D_2$ are said to be {\em independent} if $\s\t$ is a simplex of dimension $(i+j+1)$ for every pair of simplices $\s\in\D_1$ and $\t\in\D_2$ of dimensions $i$ and $j$, respectively. The join of two independent simplicial complexes $\D_1$ and $\D_2$ is the simplicial complex $\D_1\star\D_2:=\{\s\t : \s\in\D_1,\hspace{.15cm}\text{and}\hspace{.15cm}\t\in\D_2\}$. 
For a pair $(\sigma,\Delta)$, where $\sigma$ is a simplex and $\Delta$ is a simplicial complex, $\sigma\star\Delta$ denotes the simplicial complex $\{\alpha:\alpha\leq\sigma\}\star\Delta$. The {\em link} of a face $\sigma$ in $\Delta$ is the set $\{\gamma\in \Delta : \gamma\cap\sigma=\emptyset$ and $ \gamma\sigma\in \Delta\}$, denoted by $\lk (\sigma,\D)$. Similarly, the {\em star} of a face $\sigma$ in $\Delta$ is the set $\{\alpha\in \Delta : \alpha\leq\sigma \beta$ and $\beta\in \lk (\sigma,\D)\}$, denoted by $\st 
 (\sigma,\D$.

One of the most common enumerative tools of a $d$-dimensional simplicial complex $\D$ is its $f$-vector $(f_{-1}(\D),f_0(\D),\dots,f_d(\D))$, where $f_i(\D)$ is the number of $i$-dimensional simplices present in $\D$. We also define the $h$-vector of $\D$ as $(h_0(\D),h_1(\D),\dots,h_d(\D))$, where
$$ h_i(\D)=\sum_{j=0}^{i}(-1)^{i-j} \binom{d+1-j}{i-j}f_{j-1}(\D),$$
 and we define  $g_i(\D):= h_i(\D)-h_{i-1}(\D)$. In particular, $g_2(\D)=f_1(\D)-(d+1)f_0(\D) + \binom{d+2}{2}$ and $g_3(\D)= f_2(\D)-df_1(\D)+ \binom{d+1}{2}f_0 - \binom{d+2}{3}$.

 A strongly connected and pure simplicial complex $\D$ of dimension $d$ is called a {\em normal $d$-pseudomanifold} if every $(d-1)$-face of $\D$ is contained in exactly two facets in $\D$ and the links of all the simplices of dimension $\leq (d-2)$ are connected. If $t$ is a vertex in a normal pseudomanifold $\D$ whose link is not a triangulated sphere, then $t$ is called a {\em singular vertex} of $\D$, and in such cases, we say that $\D$ has a singularity at $t$.

 \begin{Proposition}{\rm \cite{Fogelsanger,Kalai}}\label{g2 of link is bounded by g2 of complex}
Let $d\geq 3$, and let $\D$ be a normal $d$-pseudomanifold. Then, $g_2(\lk(\s,\D))\leq g_2(\D)$ for every face $\s$ of co-dimension $3$ or more in $\D$.
\end{Proposition}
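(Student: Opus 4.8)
The plan is to reduce everything to the vertex case and then iterate, using the rigidity-theoretic reading of $g_2$. For a generic placement of the vertices of a simplicial complex $\Gamma$ into $\mathbb{R}^{n}$, the left kernel of the rigidity matrix is the space $\mathrm{Stress}(\Gamma,\mathbb{R}^{n})$ of self-stresses of the $1$-skeleton, and when $\Gamma$ is generically rigid in $\mathbb{R}^{n}$ this space has dimension $f_1(\Gamma) - n f_0(\Gamma) + \binom{n+1}{2}$. The deep input, which we may invoke since the statement cites Fogelsanger and Kalai, is that the $1$-skeleton of a normal $m$-pseudomanifold is generically rigid in $\mathbb{R}^{m+1}$ for $m \geq 2$. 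Taking $n = m+1$ then gives $\dim \mathrm{Stress}(\Gamma, \mathbb{R}^{m+1}) = f_1(\Gamma)-(m+1)f_0(\Gamma)+\binom{m+2}{2} = g_2(\Gamma)$ for every normal $m$-pseudomanifold $\Gamma$.

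First I would prove the \emph{vertex case}: if $\Gamma$ is a normal $m$-pseudomanifold with $m \geq 3$ and $v \in V(\Gamma)$, then $g_2(\lk(v,\Gamma)) \leq g_2(\Gamma)$. Write $L := \lk(v,\Gamma)$, a normal $(m-1)$-pseudomanifold. The argument chains three facts. (i) Since $m-1 \geq 2$, $L$ is generically rigid in $\mathbb{R}^{m}$, so $g_2(L) = \dim\mathrm{Stress}(L,\mathbb{R}^{m})$. (ii) As $\st(v,\Gamma) = v \star L$ is a cone over $L$, Whiteley's cone lemma provides a stress-preserving isomorphism $\mathrm{Stress}(L,\mathbb{R}^{m}) \cong \mathrm{Stress}(\st(v,\Gamma),\mathbb{R}^{m+1})$. (iii) The $1$-skeleton of $\st(v,\Gamma)$ is a subgraph of the $1$-skeleton of $\Gamma$, so extending a self-stress by $0$ on the remaining edges yields a self-stress of $\Gamma$ (the equilibrium condition at each vertex involves only incident edges, and those outside the star now carry zero stress); this produces an injection $\mathrm{Stress}(\st(v,\Gamma),\mathbb{R}^{m+1}) \hookrightarrow \mathrm{Stress}(\Gamma,\mathbb{R}^{m+1})$. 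Combining (i)--(iii) with $\dim\mathrm{Stress}(\Gamma,\mathbb{R}^{m+1}) = g_2(\Gamma)$ gives $g_2(L) \leq g_2(\Gamma)$.

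For a general face $\sigma = v_0 v_1 \cdots v_k$ of codimension at least $3$ (so $k \leq d-3$), I would iterate the vertex case along the chain $\Gamma_0 := \Delta$ and $\Gamma_{i} := \lk(v_{i-1},\Gamma_{i-1})$, which satisfies $\lk(\sigma,\Delta) = \Gamma_{k+1}$. Each $\Gamma_{i-1}$ is a normal pseudomanifold of dimension $d-i+1 \geq d-k \geq 3$, so the vertex case applies at every step and yields $g_2(\lk(\sigma,\Delta)) = g_2(\Gamma_{k+1}) \leq g_2(\Gamma_{k}) \leq \cdots \leq g_2(\Gamma_0) = g_2(\Delta)$.

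The hard part is concentrated entirely in the rigidity input---generic $(m+1)$-rigidity of normal $m$-pseudomanifolds---and in keeping the auxiliary complexes inside its hypotheses. This is exactly why the bound is restricted to codimension at least $3$: it guarantees that every link in the chain, up to and including $\lk(\sigma,\Delta)$, is a normal pseudomanifold of dimension at least $2$, so that both the rigidity theorem and the cone lemma apply. The remaining points---that a vertex link of a normal pseudomanifold is again a normal pseudomanifold, that coning preserves self-stresses, and that extension-by-zero respects the equilibrium equations---are standard and routine.
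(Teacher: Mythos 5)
The paper states this proposition without proof, citing Fogelsanger and Kalai, and your argument is precisely the standard proof from those sources: identify $g_2$ with the generic stress-space dimension via Fogelsanger's generic $(m+1)$-rigidity of normal $m$-pseudomanifolds ($m\geq 2$), transfer $g_2(\lk(v,\Gamma))$ to the star via Whiteley's cone lemma, inject stresses by extension-by-zero, and iterate over the vertices of $\sigma$, with the codimension-$3$ hypothesis exactly ensuring every link in the chain stays within the rigidity theorem's scope. Your proposal is correct, including the genericity bookkeeping (a generic placement restricts to a generic placement of the star) and the dimension count $k\leq d-3$, so there is nothing to add.
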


%\begin{Proposition}%{\rm\cite{NovikSwartz2020}}\label{NovikSwartz2020}
% If $d \geq 3$ and $\D$ is a homology $d$-manifold that is not prime, then $\D$ can be expressed either as a connected sum of prime homology manifolds or as the outcome of a handle addition on a homology manifold.
% \end{Proposition}

 A straightforward computation shows how the values of 
$g_i$ for $i=2,3$, change under the operations of handle addition and connected sum applied to a $d$-dimensional simplicial complex $K$. Specifically, the changes are given by the following expressions:

\begin{equation} \label{g_i: handles}
g_i(K^\psi) = g_i(K) + (-1)^{i}\binom{d+2}{i} \quad\text{for } i=2,3,
\end{equation}

\begin{equation} \label{g_2:connected sum}
g_i(K_1 ~\#_\psi~ K_2) = g_i(K_1) + g_i(K_2) \quad\text{for } i=2,3.
\end{equation}

\section{Implementation of Combinatorial operations}

 \begin{definition}{\rm \cite{BagchiDatta}}
 {\rm Let \( N \) be an induced subcomplex of a simplicial complex \( M \). One says that \( N \) is two-sided in \( M \) if \( |N| \) has a (tubular) neighbourhood in \( |M| \) homeomorphic to \( |N| \times [-1,1] \)   such that the image of \( |N| \) (under this homeomorphism) is \( |N| \times \{0\} \).}
 \end{definition}
  \begin{Proposition}{\rm \cite{BagchiDatta}}\label{2-sided lemma 3.3}
  Let $\D$ be a normal $d$-pseudomanifold with a missing $d$-simplex $\t$ such that $\p\t$ is two-sided in $\D$. Then $\D$ is the result of a connected sum or handle addition to normal $d$-pseudomanifolds.  
 \end{Proposition}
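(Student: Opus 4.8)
The plan is to cut $\D$ along the two-sided sphere $\p\t$, fill each resulting boundary copy with a $d$-simplex, and then separate the connected-sum case from the handle-addition case by a connectivity dichotomy. I begin by recording the geometry: since $\t$ is a missing $d$-simplex, $\p\t$ is the boundary complex of a $d$-simplex, hence a triangulated $(d-1)$-sphere, and it is an \emph{induced} subcomplex of $\D$ on $V(\t)$ (all proper faces of $\t$ lie in $\D$, but $\t$ itself does not, so no face of $\D$ other than those of $\p\t$ is spanned by $V(\t)$). By hypothesis $|\p\t|$ carries a tubular neighbourhood $|\p\t|\times[-1,1]$ in $|\D|$ with $|\p\t|$ identified with $|\p\t|\times\{0\}$. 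This product structure lets me label the two local sides $+$ and $-$: each ridge $\s_i=\t-v_i$ of $\p\t$ lies in exactly two facets of $\D$ by normality, and two-sidedness assigns one of them to the $+$ side and the other to the $-$ side coherently along all of $\p\t$, so the facets of $\D$ incident to a ridge of $\p\t$ are partitioned into a $+$-class and a $-$-class meeting each ridge one apiece.

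Next I perform the cut-and-fill. I introduce two disjoint labelled copies $\p\t'=v_0'\cdots v_d'$ and $\p\t''=v_0''\cdots v_d''$ of $\p\t$, and form $\tilde\D$ from $\D$ by replacing $\p\t$ with $\p\t'\sqcup\p\t''$: in each $+$-side facet the vertices lying in $V(\t)$ are relabelled with primes, in each $-$-side facet with double primes, while every face of $\D$ disjoint from $V(\t)$ is kept unchanged. Then I set $\hat\D:=\tilde\D\cup\{\t',\t''\}$ by adjoining the $d$-simplices $\t'=v_0'\cdots v_d'$ and $\t''=v_0''\cdots v_d''$ to fill the two spheres. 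Purity and the two-facet condition are immediate: each ridge of $\p\t'$ (resp. $\p\t''$) now lies in one original $+$-side (resp. $-$-side) facet together with $\t'$ (resp. $\t''$), and all other ridges are untouched, so I only need normality to conclude $\hat\D$ is a closed normal $d$-pseudomanifold.

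The heart of the verification, and the step I expect to be the main obstacle, is the connectedness of the link of every face of dimension at most $d-2$ in $\hat\D$. The only links that change are those of faces $\s$ spanned by proper subsets of $V(\t)$, together with their copies $\s',\s''$. For such $\s'$ I would write $\lk(\s',\hat\D)=\lk(\s',\tilde\D)\cup\lk(\s',\t')$, where $\lk(\s',\t')$ is the boundary sphere of the complementary face of $\t'$, and argue that the tubular-neighbourhood product structure cuts the (connected) link $\lk(\s,\D)$ cleanly into a $+$-part and a $-$-part along the sphere $\lk(\s,\p\t)$; capping the $+$-part with $\lk(\s',\t')$ then restores connectivity, and symmetrically for $\s''$. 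This is precisely where two-sidedness is indispensable, since it is what guarantees the clean separation of $\lk(\s,\D)$ into the two parts being recombined with the filling hemispheres.

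Finally I run the dichotomy. Since $\D$ is strongly connected and we have only cut along the connected sphere $\p\t$, every component of $\hat\D$ must meet one of $\p\t',\p\t''$, so $\hat\D$ is either connected or has exactly two components. If $\hat\D$ is disconnected, let $\D_1,\D_2$ be its components, each a normal $d$-pseudomanifold containing exactly one of $\t',\t''$; undoing the cut by deleting $\t'$ from $\D_1$ and $\t''$ from $\D_2$ and gluing along the matching $v_i'\meets v_i''$ exhibits $\D=\D_1\,\#_\psi\,\D_2$. If $\hat\D$ is connected, it is a single normal $d$-pseudomanifold $\D'$ containing both $\t'$ and $\t''$, and undoing the cut identifies the two facets $\t',\t''$ via $\psi$, which is exactly the handle addition $\D=(\D')^\psi$. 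In either case $\D$ is the result of a connected sum or a handle addition of normal $d$-pseudomanifolds, as required.
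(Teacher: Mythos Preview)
The paper does not supply its own proof of this proposition: it is quoted verbatim from Bagchi--Datta \cite{BagchiDatta} and used as a black box (the paper's contribution is the surrounding Propositions~\ref{d-dim connected cum} and~\ref{4 dim 2-vertex two sided}, which manufacture the two-sidedness hypothesis from separation conditions on vertex links). So there is nothing in the present paper to compare your argument against line by line.

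That said, your cut-and-fill argument is the standard one and is the approach taken in the cited source: duplicate $\p\t$ into $\p\t'\sqcup\p\t''$ using the $\pm$ labelling furnished by the tubular neighbourhood, cap each copy with a $d$-simplex, verify the result is normal, and read off the connected-sum/handle dichotomy from the number of components. Your outline is correct. Two places are somewhat compressed but not wrong: the link-connectivity step indeed reduces to the observation that the product neighbourhood of $|\p\t|$ restricts to a product neighbourhood of $|\lk(\s,\p\t)|$ inside $|\lk(\s,\D)|$, so the $+$-half of $\lk(\s,\D)$ together with the capping simplex $\lk(\s',\t')$ is a normal pseudomanifold (in particular connected); and the ``at most two components'' claim for $\hat\D$ follows because any facet of $\hat\D$ other than $\t',\t''$ is reached from $\t'$ or $\t''$ by lifting a strong path in $\D$ and stopping at its first crossing of $\p\t$. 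With those two remarks spelled out your proof is complete and agrees with the literature.
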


\begin{Proposition}\label{d-dim connected cum}
Let $\D$ be a normal $d$-pseudomanifold and $\t$ be a missing $d$-simplex in $\D$. If for every vertex $x$ in $\t$, $\p(\t-x)$ separates $\lk (x,\D)$, then $\D$ was formed using a handle addition or a connected sum.    
\end{Proposition}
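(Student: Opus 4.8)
The plan is to reduce the statement to the two-sidedness criterion of Proposition \ref{2-sided lemma 3.3}: I will show that the hypotheses force $\p\t$ to be two-sided in $\D$, after which that proposition immediately yields that $\D$ is the result of a connected sum or a handle addition. First observe that, since $\t$ is missing, $\D[V(\t)] = \p\t$, so $\p\t$ is an \emph{induced} subcomplex of $\D$ whose geometric carrier is the sphere $S^{d-1}$; it is therefore a legitimate candidate for the two-sidedness lemma, and the whole difficulty is to verify two-sidedness from the separation data supplied in the vertex links. Throughout I assume $d\geq 3$.

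The key is a local reformulation. For a face $\gamma \leq \t$ write $W = V(\t)\setminus V(\gamma)$ and let $\sigma_W$ denote the (missing) simplex of $\D$ on the vertex set $W$. A direct computation gives $\lk(\gamma,\p\t) = \p\sigma_W$, and in particular $\lk(x,\p\t) = \p(\t - x)$ for a vertex $x$; moreover $\lk(\gamma,\D)$ is a normal $(|W|-1)$-pseudomanifold inside which $\p\sigma_W$ sits as a codimension-one sphere. Passing to the star $\st(\gamma,\D) = \gamma \star \lk(\gamma,\D)$ shows that $\p\t$ \emph{locally separates} $\D$ along the relative interior of $\gamma$ if and only if $\p\sigma_W$ separates $\lk(\gamma,\D)$. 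The hypothesis is exactly this condition for the $0$-dimensional faces $\gamma = x$. I would first record that the condition is automatic at the top faces of $\p\t$ (where $\lk(\gamma,\D) = S^0$) and at the ridges (where two distinct points always separate the circle $\lk(\gamma,\D)$), and then propagate the hypothesis to \emph{every} face via the following link-transfer principle: if a separating sphere $S$ splits a normal pseudomanifold $M$ as $M_1 \cup M_2$ with $\p M_i = S$, then for each face $\alpha \in S$ the sphere $\lk(\alpha,S)$ separates $\lk(\alpha,M)$. Applying this inside $M = \lk(x,\D)$ with $S = \p(\t - x)$ and $\alpha = \gamma\setminus x$ for any vertex $x\in V(\gamma)$, and using $\lk(\gamma\setminus x,\p(\t-x)) = \p\sigma_W$ together with $\lk(\gamma\setminus x,\lk(x,\D)) = \lk(\gamma,\D)$, yields that $\p\sigma_W$ separates $\lk(\gamma,\D)$ for \emph{all} faces $\gamma \leq \t$. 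Hence $\p\t$ locally separates $\D$ along every face, i.e.\ $\p\t$ is locally two-sided at every point.

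Finally I would upgrade local two-sidedness to genuine, global two-sidedness. Local two-sidedness everywhere means that the assignment, to each point of $|\p\t|$, of the two local sides of $\p\t$ in $\D$ defines a double cover of $|\p\t|$; the complex $\p\t$ is two-sided precisely when this cover is trivial, i.e.\ when its monodromy $\pi_1(|\p\t|)\to \mathbb{Z}_2$ vanishes. Since $|\p\t| = S^{d-1}$ is simply connected for $d\geq 3$, the cover is trivial and $\p\t$ is two-sided in $\D$. Proposition \ref{2-sided lemma 3.3} now applies and finishes the proof.

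I expect the main obstacle to be the link-transfer principle and its interaction with the singular vertices. Two-sidedness is genuinely global, so the heart of the argument is showing that the purely local data at the $d+1$ vertices of $\t$ suffices: at intermediate faces (for instance where $|W| = 3$ and $\lk(\gamma,\D)$ is a surface met by a $3$-cycle) separation is \emph{not} automatic, so one really must transport the vertex hypothesis through the links, and the transfer must be checked carefully because $\lk(x,\D)$ need not be a sphere when $x$ is singular. Making the local-to-global gluing rigorous — either through the covering-space argument above, or through an explicit compatible two-colouring of the facets of $\D$ incident to $\p\t$, built from the splittings $\lk(x,\D) = M_1^x \cup M_2^x$ — is where the real work lies; once two-sidedness is established, Proposition \ref{2-sided lemma 3.3} does the rest.
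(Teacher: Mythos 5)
Your proposal is correct, and its skeleton coincides with the paper's: reduce to Proposition \ref{2-sided lemma 3.3}, check that $|\p\t|$ locally separates a neighbourhood of each of its points, glue the local sides into a $\mathbb{Z}_2$-bundle over $|\p\t|\cong S^{d-1}$, and conclude triviality (hence two-sidedness) from simple connectivity for $d\geq 3$. The genuine difference is in the local step at non-vertex points, and here your treatment is actually tighter than the paper's. The paper disposes of all non-vertex points in one line by asserting that a small metric ball around any such point of $|\p\t|$ is homeomorphic to a $d$-ball, so that local separation there is automatic; this is only guaranteed for points interior to faces of codimension at most two ($\lk(\gamma,\D)$ being $S^0$ or a circle), since in a normal $d$-pseudomanifold with $d\geq 4$ a face of codimension $\geq 3$ can be singular --- indeed the edge foldings used in this very paper produce normal $4$-pseudomanifolds in which an edge has a torus link, and the complementary circle $\p\sigma_W$ in a positive-genus surface need not separate. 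Your link-transfer principle addresses exactly this: writing $\lk(\gamma,\D)=\lk(\gamma\setminus x,\lk(x,\D))$ for a vertex $x\in V(\gamma)$ and intersecting the given splitting $\lk(x,\D)=S_x^{+}\cup S_x^{-}$ with the star of $\gamma\setminus x$ yields $\lk(\gamma,\D)=\lk(\gamma\setminus x,S_x^{+})\cup\lk(\gamma\setminus x,S_x^{-})$ with intersection precisely $\lk(\gamma\setminus x,\p(\t-x))=\p\sigma_W$, so the vertex hypothesis really does propagate to every face of $\t$; moreover the inherited $\pm$ labels give the coherent two-colouring of local sides that makes the $\mathbb{Z}_2$-bundle well defined, which is the point both you and the paper leave somewhat informal (the paper's ``gluing all such metric balls'' is no more rigorous than your covering-space formulation). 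In short: same route and same key global argument, but your intermediate-face analysis fills a case the paper's proof glosses over, at the cost of the extra transfer lemma, which as verified above is sound.
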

 \begin{proof}
   In view of Proposition \ref{2-sided lemma 3.3}, it suffices to prove that $\p\t$ is two-sided in $\D$.  Let $x$ be a point in $|\p\t|$. If $x$ is not a vertex, then since $\D$ is a normal $d$-pseudomanifold, each small metric ball around $x$ in $|\D|$ is homeomorphic to the unit $d$-ball in $\mathbb{R}^d$. Moreover, $\p\t$ divides each metric ball around $x$ into two components. On the other hand, for every vertex $x$ in $\t$,  the fact that $\p(\t-x)$ separates $\lk (x,\D)$ implies that each small metric ball around each vertex of $\t$ in $|\D|$ is also separated into two components by $|\p\t|$. Since $|\p\t|$ is compact, gluing all such metric balls around the points of $|\p\t|$ provides us with a $\mathbb{Z}_2$ bundle over $|\p\t|$. Since $|\p\t|$ is a $(d-1)$-sphere, the bundle must be trivial. Hence, $|\p\t|$ divides a small neighborhood of itself in $|\D|$. Hence, $\p\t$ is two-sided in $\D$.
 \end{proof}
\begin{Proposition}[\cite{BasakSwartz}] \label{handle addition does not have g2-minimal}
 Let $d\geq 3$, and let $\D$ be a normal $d$-pseudomanifold. If $\D^{\psi}$ is obtained from $\D$ by a handle addition, then the normal $d$-pseudomanifold $\D^{\psi}$ cannot be $g_2$-minimal.    
 \end{Proposition}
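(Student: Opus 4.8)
The plan is to show directly that \emph{no} vertex of $\D^{\psi}$ can witness $g_2$-minimality, i.e.\ that $g_2(\lk(v,\D^{\psi})) < g_2(\D^{\psi})$ for every vertex $v$. The driving observation is that a handle addition raises the second $g$-number by a fixed positive amount: by \eqref{g_i: handles} with $i=2$ we have $g_2(\D^{\psi}) = g_2(\D) + \binom{d+2}{2}$, and since $d\ge 3$ this surplus $\binom{d+2}{2}$ is strictly positive. The whole argument then reduces to showing that every vertex link of $\D^{\psi}$ has $g_2$ at most $g_2(\D)$, which already falls short of $g_2(\D^{\psi})$.

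I would first recall the anatomy of the operation: a handle addition identifies two facets $\sigma_1 = x_0\cdots x_d$ and $\sigma_2 = y_0\cdots y_d$ of $\D$ via a bijection $x_i\mapsto y_i$, its defining condition forcing $\sigma_1$ and $\sigma_2$ to lie at distance at least $3$ (so each pair $x_i,y_i$ is non-adjacent and has no common neighbour). Write $z_i$ for the identified vertex. Split the vertices of $\D^{\psi}$ into those that are not among the $z_i$ and those that are. If $v$ is a vertex of $\D$ outside $\sigma_1\cup\sigma_2$, then $v$ is at distance at least $2$ from at least one of the two facets, so the identification affects $\lk(v,\D)$ only by a relabelling of names and creates no new faces; hence $\lk(v,\D^{\psi})\cong\lk(v,\D)$ and, by Proposition \ref{g2 of link is bounded by g2 of complex}, $g_2(\lk(v,\D^{\psi})) = g_2(\lk(v,\D)) \le g_2(\D) < g_2(\D^{\psi})$. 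Such vertices are thus ruled out at once.

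The substantive vertices are the identified ones. Deleting $\sigma_1$ and $\sigma_2$ removes the facet $\sigma_1 - x_i$ from $\lk(x_i,\D)$ and the facet $\sigma_2 - y_i$ from $\lk(y_i,\D)$, and the boundary identification glues these two $(d-1)$-pseudomanifolds along the common boundary $(d-2)$-sphere; consequently $\lk(z_i,\D^{\psi})$ is exactly the connected sum $\lk(x_i,\D)\,\#_\psi\,\lk(y_i,\D)$. Applying the connected-sum formula \eqref{g_2:connected sum} (in dimension $d-1$) gives $g_2(\lk(z_i,\D^{\psi})) = g_2(\lk(x_i,\D)) + g_2(\lk(y_i,\D))$, so everything now hinges on bounding this sum.

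The crux, and the step I expect to be the main obstacle, is the estimate
\[ g_2(\lk(x_i,\D)) + g_2(\lk(y_i,\D)) \le g_2(\D), \]
valid precisely because $x_i$ and $y_i$ lie at distance at least $3$ in $\D$. I would establish it through rigidity: by Fogelsanger's theorem the graph of a normal pseudomanifold is generically rigid in the appropriate dimension, so $g_2$ equals the dimension of the self-stress space of a generic embedding. Kalai's cone lemma lifts each self-stress of $\lk(x_i,\D)$ to a self-stress of the star $\st(x_i,\D)$, which extends by zero to a self-stress of $\D$ supported on edges meeting $\st(x_i,\D)$, and the lift is injective; likewise at $y_i$. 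Since $x_i$ and $y_i$ are at distance at least $3$, the closed stars $\st(x_i,\D)$ and $\st(y_i,\D)$ span disjoint vertex sets, whence the two families of lifted stresses have disjoint supports and are jointly independent, yielding the displayed inequality. Combining the pieces, $g_2(\lk(z_i,\D^{\psi})) \le g_2(\D) < g_2(\D) + \binom{d+2}{2} = g_2(\D^{\psi})$, so no identified vertex witnesses minimality either. As every vertex has been accounted for, $\D^{\psi}$ cannot be $g_2$-minimal. (Should the disjoint-star estimate be available as a cited lemma, one may invoke it directly in place of the rigidity argument.)
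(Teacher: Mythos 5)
Your proposal is correct and follows essentially the same route as the proof in the cited source \cite{BasakSwartz} (the present paper only quotes the proposition): the handle addition raises $g_2$ by $\binom{d+2}{2}$, links of non-identified vertices survive up to relabelling, links of identified vertices are connected sums $\lk(x_i,\Delta)\,\#_\psi\,\lk(y_i,\Delta)$, and the crux inequality $g_2(\lk(x_i,\Delta))+g_2(\lk(y_i,\Delta))\le g_2(\Delta)$ is established there exactly as you propose, by lifting link stresses to the two vertex-disjoint stars via the cone lemma and using disjointness of supports. One local slip worth fixing: a vertex $v$ outside $\sigma_1\cup\sigma_2$ need \emph{not} be at distance at least $2$ from one of the two facets (it may be adjacent to, say, $x_1$ and $y_2$); the correct justification that $\lk(v,\Delta^{\psi})\cong\lk(v,\Delta)$ is admissibility itself, which forbids $v$ from being a common neighbour of any pair $x_i,\psi(x_i)$ and hence prevents any two faces of $\lk(v,\Delta)$ from being identified.
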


Handle addition and connected sum are standard parts of combinatorial topology, but the operation of {\em  folding} was recently introduced in \cite{BasakSwartz}.

\begin{definition}[Vertex folding \cite{BasakSwartz}] 
{\rm 
Let $\sigma_1$ and $\sigma_2$ be two facets of a simplicial complex $K$, whose intersection is a single vertex $x.$  A bijection $\psi:\sigma_1 \to \sigma_2$ is {\em  vertex folding admissible} if $\psi(x) = x$ and for all other vertices $y$ of $\sigma_1$, the only path of length two from $y$ to $\psi(y)$ is $P(y, x, \psi(y)).$ For a vertex folding admissible map $\psi$, we can form the complex $K^\psi_x$ by identifying all faces $\rho_1 \leq \sigma_1 $ and $\rho_2 \leq \sigma_2 $, such that $\psi(\rho_1) = \rho_2,$ and then removing the facet formed by identifying $\sigma_1$ and $\sigma_2.$  In this case, we say that $K^\psi_x$ is a {\em  vertex folding} of $K$  at $x.$   In a similar spirit, $K$ is a {\em  vertex unfolding} of $K^\psi_x.$ }
\end{definition}

\noindent A straightforward computation shows that if $K^\psi_x$ is obtained from a $d$-dimensional  simplicial complex  $K$ by a vertex folding at $x$, then

 \begin{equation} \label{folding g2}
g_i(K^\psi_x) = g_i(K) + (-1)^{i}\binom{d+1}{i} \quad\text{for } i=2,3.
\end{equation}
The definition of {\em edge folding} follows the same pattern as vertex folding.
\begin{definition}[Edge folding \cite{BasakSwartz}]
{\rm 
Let $\sigma_1$ and $\sigma_2$ be two facets of a simplicial complex $K$, whose intersection is an edge $uv$. A bijection $\psi : \sigma_1 \to \sigma_2$ is {\em edge folding admissible} if $\psi(u)= u, \psi(v) = v$, and for all other vertices $y$ of $\sigma_1$, all paths of length two or less from $y$ to $\psi(y)$ pass through either $u$ or $v$. Identify all faces $\rho_1\leq \sigma_1$ and $\rho_2 \leq \sigma_2$, such that $\psi : \rho_1\to \rho_2$ is a bijection. The complex obtained by removing the facet resulting from identifying $\sigma_1$ and $\sigma_2$ is denoted by $K^\psi_{uv}$ and is called an {\em edge folding} of $K$ at $uv$. In a similar spirit,  $K$ is an {\em edge unfolding} of $K^\psi_{uv}$.}
\end{definition}
\noindent If $K$ is a normal $d$-pseudomanifold and $K^\psi_{uv}$ is obtained from $K$ by an edge folding at $uv$, then 
\begin{equation} \label{edge folding g2}
g_i(K^\psi_{uv}) = g_i(K) + (-1)^{i}\binom{d}{i} \quad\text{for } i=2,3.
\end{equation}

 \begin{Proposition}[\cite{BasakSwartz}] \label{3-dim vertex folding}
Let $K$ be a  normal $3$-pseudomanifold. Let $\t:=abcd$ be a missing tetrahedron in $K$ such that $(i)$ for $x\in\{b,c,d\}$, $\partial(\t-x)$ separates  $\lk (x,K)$ and $(ii)$  $\partial(bcd)$  does not separate  $\lk (a,K)$. Then there exists a  normal $3$-pseudomanifold $K'$ such that  $K = (K')^\psi_a$, i.e., $K $ is obtained from a vertex folding at $a \in K'$,  and $abcd$ is the image of the removed facet. 
\end{Proposition}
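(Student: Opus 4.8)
The plan is to reverse the folding explicitly: I would construct a normal $3$-pseudomanifold $K'$ by \emph{splitting} each of $b,c,d$ into two vertices while keeping $a$ (and every vertex off $\t$) fixed, adjoining the two tetrahedra $\sigma_1=ab'c'd'$ and $\sigma_2=ab''c''d''$, and then checking that folding $K'$ at $a$ along the bijection $\psi(a)=a,\ \psi(b')=b'',\ \psi(c')=c'',\ \psi(d')=d''$ returns $K$ with $abcd$ as the removed facet. Since $\t$ is missing, $\partial\t\subseteq K$, so the four triangles $abc,abd,acd,bcd$ all lie in $K$, and each of $\lk(a,K),\lk(b,K),\lk(c,K),\lk(d,K)$ is a closed surface in which $\partial(\t-x)$ is an embedded $3$-cycle; hypotheses $(i)$ and $(ii)$ say this cycle separates the surface for $x\in\{b,c,d\}$ and fails to separate it for $x=a$.

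First I would fix the combinatorial splitting rule. For $x\in\{b,c,d\}$ the separating cycle $C_x:=\partial(\t-x)$ $2$-colours the triangles of the surface $\lk(x,K)$ according to the side of $C_x$ on which they sit, and I assign each facet of $K$ incident to $x$ to the copy $x'$ or $x''$ accordingly. The link of $a$ in $K'$ is produced by cutting $\lk(a,K)$ along the \emph{non}-separating cycle $C_a=\partial(bcd)$ and capping the two resulting boundary triangles by $b'c'd'$ and $b''c''d''$; because $C_a$ does not separate, this cut-and-cap surface stays connected, which is exactly what forces $\lk(a,K')$ to be a single closed surface rather than two (the latter being the connected-sum situation of Proposition~\ref{d-dim connected cum}). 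This is where hypothesis $(ii)$ is used, and it is the feature that makes $a$ a genuine folding vertex instead of a connected-sum vertex.

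The crux — and the step I expect to be the main obstacle — is showing that the three independent side-assignments are mutually \emph{consistent}, so that every facet meeting $\t$ lifts to a single simplex of $K'$ whose split vertices use matching copies (for instance, the two facets containing $bcd$ must lift to $p\,b'c'd'$ and $q\,b''c''d''$, never to a ``mixed'' tetrahedron). I would reduce this to a local argument along the edges of $\partial\t$: for an edge such as $cd$, the link $\lk(cd,K)$ is a circle carrying the vertices $a$ and $b$ (from the faces $acd,bcd$ of $\t$), these two vertices cut the circle into two arcs, and the cycles $C_b$ and $C_c$ meet the fans of $c$ and of $b$ precisely at the positions of $a$ and $b$; hence the $b$-label and the $c$-label of any facet through $cd$ are read off from the same arc and must agree. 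Anchoring the global choice of ``prime'' side at one of the two facets containing $bcd$ and propagating along these edge-links then yields a well-defined $K'$.

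It then remains to verify that $K'$ is a normal $3$-pseudomanifold and that $\psi$ is vertex-folding admissible. Every $2$-face survives in exactly two facets: faces off $\t$ are inherited from $K$, while each cut triangle (such as $b'c'd'$ or $ac'd'$) acquires $\sigma_1$ (respectively $\sigma_2$) as the second facet that the missing tetrahedron $abcd$ no longer supplies; links of lower faces remain connected, using $(ii)$ at $a$, and $K'$ is strongly connected through $a$. Admissibility of $\psi$ amounts to checking that the only length-two path from $b'$ to $b''$ is $b'\,a\,b''$, and likewise for $c,d$: any common neighbour of $b'$ and $b''$ would have to lie on $C_b=\partial(acd)$, and since $c,d$ are themselves split while every vertex of $\lk(b,K)$ off $C_b$ lies strictly on one side, the only shared neighbour left is $a$. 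Finally, identifying $\sigma_1$ with $\sigma_2$ under $\psi$ re-merges the split vertices and deletes $abcd$, recovering $K$, so $K=(K')^\psi_a$ as claimed.
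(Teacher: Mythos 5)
Your construction is essentially the paper's own approach: the proposition is only cited here from \cite{BasakSwartz}, but the paper's proof of its four-dimensional analogue (Lemma \ref{4-dim vertex folding}) proceeds exactly as you do --- splitting the vertices of $\tau$ other than the folding vertex into two copies according to the side-assignments induced by the separating spheres, verifying consistency of those assignments through the circle links of the edges of $\tau$ anchored at a vertex of $\lk(bcd,K)$, using non-separation at $a$ to conclude that the cut-and-capped link remains a single closed surface (the paper phrases this via two-sidedness together with Bagchi--Datta's handle-addition lemma, $\lk(v,\D)=S^{\psi}$, and then cones the boundary off by the folding vertex), and deducing normality of $K'$ from the connected-sum decompositions $\lk(x,K)=\lk(x',K')\,\#_{\psi}\,\lk(x'',K')$ plus the relabelling description of the links of the unsplit vertices. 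Two small points to fix in the write-up: in your edge-consistency step for the edge $cd$ the labels to be reconciled are the $c$- and $d$-labels (the cut points on the circle $\lk(cd,K)$ being $a$ and $b$), not the ``$b$- and $c$-labels''; and the two-sidedness of $\partial(bcd)$ in $\lk(a,K)$, which your cut-and-cap step tacitly uses, should be stated explicitly --- it does follow from your side-assignment bookkeeping along the edges and vertices of the cycle, and it is exactly what the paper isolates in Proposition \ref{4 dim 2-vertex two sided} and Corollary \ref{4 dim two sided} before invoking the unfolding construction.
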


\begin{definition}[One-vertex suspension \cite{BagchiDatta98}]\label{one-vertex  suspension}
{ \rm Let $\D$ be a normal $(d-1)$-pseudomanifold and $v$ be a vertex in $\D$. Consider the  normal $d$-pseudomanifold $\sum_{v,u}\D:=(v\star\{\tau\in\D : v\nleq\tau\})\cup(u\star\D)$, where $u$ is a new vertex.  The complex $\sum_{v,u}\D$ is called an \textit{one-vertex suspension} of $\D$ \wrt the vertex $v$.}
\end{definition}

The combinatorial operations of one-vertex suspension, vertex folding and edge folding have played a key role in dimension 3 in the combinatorial description of $g_2$-minimal normal pseudomanifolds with at most two singularities known so far. In fact, appropriate applications of these operations, together with the connected sum on boundary complexes of a 4-simplex, can provide a specific combinatorial structure, as described below:

\begin{Proposition}{\rm \cite{BasakSwartz}}\label{3-dim npm two singularity}
Let $\D$ be a $g_2$-minimal normal $3$-pseudomanifold with at most two singular vertices, $t$ and $s$, such that $g_2(\lk (t,\D))\geq g_2(\lk (s,\D))$. Then $\D$ is obtained from a one-vertex suspension of $\lk (s,\D)$ and some boundary complexes of $4$-simplices by a sequence of operations of the form vertex foldings and connected sums. In particular, if $\D$ has exactly one singular vertex, then  $\D$ is obtained from some boundary complexes of $4$-simplices by a sequence of operations of the form vertex foldings and connected sums.
\end{Proposition}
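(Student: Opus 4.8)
The plan is to argue by induction, peeling off building blocks with the connected-sum and folding propositions above. First I would record the decisive normalization: for a non-singular vertex $v$ the link $\lk(v,\D)$ is a triangulated $2$-sphere, and a short Euler-characteristic computation gives $f_1=3f_0-6$, hence $g_2=0$, for every triangulated $2$-sphere. Thus $g_2(\lk(v,\D))=0$ for all non-singular $v$, whereas a positive-genus surface has $g_2>0$, so $g_2(\lk(w,\D))>0$ exactly when $w$ is singular. Consequently, once $g_2(\D)>0$ the vertex realizing relative minimality must be singular, and the hypothesis $g_2(\lk(t,\D))\ge g_2(\lk(s,\D))$ lets me take it to be $t$, so $g_2(\D)=g_2(\lk(t,\D))$. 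The base case $g_2(\D)=0$ is Kalai's theorem: $\D$ is a stacked $3$-sphere, i.e. a connected sum of boundary complexes of $4$-simplices, with no singular vertex.

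Next I would reduce to the connected-sum-irreducible case. If $\D$ has a missing $3$-simplex $\t$ with $\p\t$ two-sided, Proposition~\ref{2-sided lemma 3.3} writes $\D$ as a connected sum or handle addition, and Proposition~\ref{handle addition does not have g2-minimal} excludes handle addition for a $g_2$-minimal complex, so $\D=\D_1\#\D_2$. Combining the additivity $g_2(\D)=g_2(\D_1)+g_2(\D_2)$ from \eqref{g_2:connected sum} with the bound $g_2(\lk(t,\D_i))\le g_2(\D_i)$ of Proposition~\ref{g2 of link is bounded by g2 of complex}, I would force the summand not containing $t$ to have $g_2=0$ (a stacked sphere, built from boundaries of $4$-simplices) and the summand containing $t$ to remain $g_2$-minimal at $t$ with strictly fewer facets. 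Iterating, I may assume $\D$ is connected-sum-irreducible, still $g_2$-minimal at the singular vertex $t$, with $\lk(t,\D)$ a surface of positive genus.

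The heart of the argument is to exhibit a vertex folding at $t$. Since $\lk(t,\D)$ has positive genus it carries a non-separating $3$-cycle bounding no triangle of the surface; pulling this back I would locate a missing tetrahedron $\t=tbcd$ of $\D$ for which $\p(bcd)$ does not separate $\lk(t,\D)$. The remaining task is to show, from $g_2$-minimality, that $\p(\t-x)$ \emph{does} separate $\lk(x,\D)$ for each $x\in\{b,c,d\}$; this is precisely the hypothesis of Proposition~\ref{3-dim vertex folding}, which then produces a normal $3$-pseudomanifold $K'$ with $\D=(K')^\psi_t$. By \eqref{folding g2}, $g_2(K')=g_2(\D)-6$, and since the folding identifies two vertex-disjoint facets of $\lk(t,K')$ and deletes the result—an operation raising $g_2$ of that surface by $6$ via \eqref{g_i: handles}—relative minimality at $t$ is preserved, so $K'$ is a $g_2$-minimal normal $3$-pseudomanifold of strictly smaller $g_2$ with at most two singular vertices, to which the induction hypothesis applies.

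It remains to identify the irreducible complexes admitting no such folding. With a single singular vertex $t$, each unfolding lowers the genus of $\lk(t,\D)$, so the process halts only when $\lk(t,\D)$ is a $2$-sphere and $g_2(\D)=0$, returning us to connected sums of boundaries of $4$-simplices. With two singular vertices $t,s$, unfolding at $t$ drives $g_2(\lk(t,\D))$ down toward $g_2(\lk(s,\D))$, and the terminal irreducible complex should be the one in which $t$ and $s$ are the two apices of a one-vertex suspension—both having link $\lk(s,\D)$—so that $\D=\sum_{v,u}\lk(s,\D)$ as in Definition~\ref{one-vertex suspension}. I expect the main obstacle to lie in the third paragraph: ensuring the three separation conditions at $b,c,d$ hold simultaneously, so that Proposition~\ref{3-dim vertex folding} applies rather than the all-separating situation of Proposition~\ref{d-dim connected cum}, and verifying that the terminal irreducible complex is forced to be a boundary of a $4$-simplex or a one-vertex suspension of $\lk(s,\D)$ and nothing more exotic. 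Both points should come down to converting $g_2$-minimality into sharp local edge-count identities around $t$ and its non-neighbours.
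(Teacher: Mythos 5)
Your global induction — split off connected summands, unfold at $t$, terminate at a one-vertex suspension — has the right shape and matches the strategy of \cite{BasakSwartz}, which this paper imports without reproving (the closest in-paper analogues are Lemma \ref{G(D)=G(st t)} and Lemma \ref{4 dim npm up to one-vertex suspension}). But there is a genuine gap at the heart of your third paragraph: your source of missing tetrahedra. A triangulated surface of positive genus need \emph{not} contain a non-separating $3$-cycle; the shortest non-separating cycle can be made arbitrarily long by subdividing, so ``pulling back'' such a cycle from $\lk (t,\D)$ is simply not available. Moreover, even when a non-separating $3$-cycle $bcd$ exists in $\lk (t,\D)$, it only yields the triangles $tbc, tbd, tcd \in \D$; unless $bcd$ is itself a face of $\D$, the simplex $tbcd$ has incomplete boundary and is not a missing tetrahedron, and nothing in your argument supplies $bcd \in \D$. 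The actual engine is rigidity, not surface topology: $g_2$-minimality at $t$ forces, after clearing vertices outside $\st (t,\D)$ (Proposition \ref{vertex outside star} plus the retriangulation/facet-subdivision bookkeeping), that $Skel_{1}(\D)=Skel_{1}(\st (t,\D))$, exactly as in Lemma \ref{G(D)=G(st t)}: any additional edge would make the graph generically $(d+1)$-rigid and contradict minimality. With that identity, \emph{every} triangle $bcd \in \D$ with $t \notin bcd$ and $tbcd \notin \D$ is automatically a missing tetrahedron through $t$, and such triangles exist unless $\lk (t,\D)=\lk (s,\D)$, which is precisely the one-vertex-suspension terminal case you correctly guessed.

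The second gap is the separation hypotheses at $b,c,d$, which you defer to ``sharp local edge-count identities.'' They do not come from edge counts: writing $T:=t\star\tau$ for the missing tetrahedron, once $x \in V(\tau)$ is non-singular, $\lk (x,\D)$ is a triangulated $2$-sphere and $\partial(T-x)$ is an embedded circle in it, so it separates by the Jordan curve theorem — immediate, but only if you can choose $T$ so that $V(\tau)$ avoids the second singular vertex $s$. Your proposal never ensures this; if $s \in V(\tau)$ you can land in the configuration where neither $\partial(T-t)$ separates $\lk (t,\D)$ nor $\partial(T-s)$ separates $\lk (s,\D)$, which is edge-folding territory (compare Step 3 in the proof of Theorem \ref{dim 4 2-sing main2} and Lemma \ref{4-dim edge folding}) and would break your stated conclusion, since the proposition allows only vertex foldings and connected sums. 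The fix, visible one dimension up in Step 2 of Lemma \ref{4 dim npm up to one-vertex suspension}, is to take $\tau$ to be a triangle of $\lk (s,\D)$ not containing $t$ and not lying in $\lk (t,\D)$: then the vertices of $\tau$ are non-singular, Proposition \ref{3-dim vertex folding} or Proposition \ref{d-dim connected cum} applies (handle addition being excluded by Proposition \ref{handle addition does not have g2-minimal}), and when no such $\tau$ exists the two links coincide and $\D$ is the one-vertex suspension of $\lk (s,\D)$. Your $g_2$-bookkeeping under unfolding via \eqref{folding g2} and \eqref{g_i: handles}, and the resulting termination argument, are correct once these two gaps are repaired.
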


\begin{Proposition}\label{4 dim 2-vertex two sided}
Let $\D$ be a normal $4$-pseudomanifold and $\s:=uvabc$ be a missing facet in $\D$ such that, for each $x\in V(\s)\setminus \{u,v\}$, $\p(\s-x)$ separates $\lk (x,\D)$ into two parts. Then the $2$-dimensional sphere $\p(\s-v)$ is two-sided in $\lk (v,\D)$.
\end{Proposition}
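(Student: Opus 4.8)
The plan is to pass to the vertex link $L:=\lk(v,\D)$, which is a normal $3$-pseudomanifold, and to recognise $\t:=\s-v=uabc$ as a missing tetrahedron of $L$: indeed $\t v=\s\notin\D$ forces $\t\notin L$, while every proper face of $\s$ lies in $\D$, so $\p\t\subseteq L$. Thus $\p(\s-v)=\p\t$ is an embedded $2$-sphere in $L$, and the claim is exactly that this sphere is two-sided in $L$. Following the mechanism in the proof of Proposition \ref{d-dim connected cum}, I would reduce two-sidedness to a local statement. Away from the vertices $u,a,b,c$ the sphere $\p\t$ lies in the manifold part of $L$ (interior points of its triangles lie in exactly two tetrahedra of $L$, and interior points of its edges have circle links), so $\p\t$ is bicollared there; it then remains to check bicollarability at each of the four vertices, after which the normal $\mathbb{Z}_2$-bundle of $\p\t$ is defined over all of $\p\t\cong S^2$ and is trivial because $H^1(S^2;\mathbb{Z}_2)=0$, giving two-sidedness.

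For the three vertices $x\in\{a,b,c\}$ I would feed in the hypotheses. Here $\lk(x,L)=\lk(v,\lk(x,\D))$ and $\lk(x,\p\t)=\p(\s-v-x)=\lk(v,\p(\s-x))$. By assumption $\p(\s-x)$ separates $\lk(x,\D)$; a separating codimension-one sphere in a connected normal pseudomanifold is two-sided, hence bicollared at each of its vertices, in particular at $v$. Restricting that bicollar to the link of $v$ shows $\lk(v,\p(\s-x))=\lk(x,\p\t)$ is two-sided in $\lk(v,\lk(x,\D))=\lk(x,L)$; equivalently, $\p\t$ is bicollared at $x$. Doing this for $x=a,b,c$ shows that the normal $\mathbb{Z}_2$-bundle of $\p\t$ is already defined over $\p\t\setminus\{u\}$.

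The vertex $u$ carries no hypothesis, and this is the step I expect to be the crux. The point is that $u$ does not need a separate separation statement. Since $\p\t\setminus\{u\}\cong S^2\setminus\{\mathrm{pt}\}$ is an open disk, any small meridian loop $\ell$ encircling $u$ inside $\p\t$ (take $\ell$ at cone-parameter $\tfrac12$ in $\st(u,\p\t)=u\star\p(abc)$, so that it avoids $a,b,c$) is null-homotopic in $\p\t\setminus\{u\}$, where the bundle is defined; hence its monodromy around $\ell$ is trivial. Near $u$ the cone structure $\st(u,L)=u\star\lk(u,L)$ identifies the normal line of $\p\t$ (locally $u\star\p(abc)$) along $\ell$ with the normal line of the link curve $\lk(u,\p\t)=\p(abc)$ inside the surface $\lk(u,L)=\lk(uv,\D)$, so the two monodromies agree. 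Triviality therefore forces $\p(abc)$ to be two-sided in $\lk(uv,\D)$, i.e.\ $\p\t$ is bicollared at $u$ as well.

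With bicollarability established at every vertex, the normal $\mathbb{Z}_2$-bundle of $\p\t$ is defined over the whole sphere $\p\t\cong S^2$; as $H^1(S^2;\mathbb{Z}_2)=0$ it is trivial, so $\p(\s-v)$ is two-sided in $\lk(v,\D)$. The main obstacle is genuinely the vertex $u$: one must resist trying to prove a local separation statement there (none is available) and instead exploit that the punctured sphere is simply connected, so that bicollarability propagates from $a,b,c$ across $u$. A secondary technical point to nail down is the implication ``separating $\Rightarrow$ bicollared at every vertex'' used for $a,b,c$, which I would justify by coorienting the two sides of the separating sphere and restricting that coorientation to the relevant vertex link.
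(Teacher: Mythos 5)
Your proof is correct, but it takes a genuinely different route from the paper's. The paper argues entirely inside the combinatorics of the five tetrahedral links: it names the two vertices $p_i,n_i$ of $\lk (\s-x,\D)$ for each $x\in V(\s)$, uses the separation hypotheses at $a,b,c$ to sort these vertices into the sides $S_x^{\pm}$, and deduces that each of the six circles $\lk (ev,\D)$, where $e$ runs over the edges of $\p(uabc)$, splits into a ``positive'' path $P_i$ and a ``negative'' path $P_i'$ meeting only in two vertices of $\{u,a,b,c\}$; the two-sided neighbourhood is then assembled from these paths (with an ad hoc edge subdivision when some $P_i$ happens to meet some $P_j'$). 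You instead run a covering-space argument: the side structure of $\p\t$ in $\lk (v,\D)$ is a two-sheeted cover away from the four vertices, the hypotheses at $a,b,c$ give local triviality there — and your reduction is sound, since the hypothesis is separation in the strong sense (a decomposition into two normal $3$-pseudomanifolds with common boundary $\p(\s-x)$), which passes to the vertex link at $v$ and exhibits $\p(\s-v-x)$ as a separating, hence two-sided, circle in $\lk (vx,\D)$ — while the crux at $u$, where no hypothesis is available, is handled by monodromy: the meridian about $u$ bounds in the punctured sphere, forcing the side cover of $\p(abc)$ in $\lk (uv,\D)$ to be trivial. Both proofs confront the same difficulty at $u$; the paper resolves it by explicitly deriving the splittings of $\lk (uav,\D)$, $\lk (ubv,\D)$, $\lk (cuv,\D)$ from the labellings propagated through $a$, $b$, $c$, whereas you get it for free from simple connectivity of the punctured $2$-sphere, and your final gluing over the compact $|\p\t|$ using $H^1(S^2;\mathbb{Z}_2)=0$ is exactly the mechanism already used in Proposition \ref{d-dim connected cum}. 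Your version is cleaner and more robust (it absorbs the intersecting-paths nuisance automatically and would generalize to higher dimensions with the same words); the paper's explicit bookkeeping, on the other hand, is not wasted, since the chosen $p_i$'s and $n_i$'s and their path decompositions are reused verbatim in the proof of Lemma \ref{4-dim edge folding}, which your more abstract argument would not supply by itself.
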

\begin{proof}
Let $\s=uvabc$, and let $\lk (uvbc,\D)=\{p_1,n_1\}, \lk (uvac,\D)=\{p_2,n_2\}, \lk (uvab,\D)=\{p_3,n_3\}, \lk (vabc,\D)=\{p_4,n_4\}$, and $\lk (uabc,\D)=\{p_5,n_5\}$. For each $x\in\{a,b,c\}$, $\p(\s-x)$ separates $\lk (x,\D)$ into two parts say $S_x^{+}$ and $S_x^{-}$, where $S_x^{+}$ and $S_x^{-}$ are $3$-dimensional normal pseudomanifolds with common boundary $\p(\s-x)$. Without loss of generality, assume that the vertices $p_2,p_3,p_4,$ and $p_5$ are in $S_{a}^{+}$. Then, $n_2,n_3,n_4$, and $n_5$ belong to $S_{a}^{-}$.

Since $\lk (cav,\D)$  is a circle and $\{p_4,n_4,p_2,n_2\}\subseteq V(\lk (cav,\D))$ with $p_4,p_2\in S_{a}^{+}$ and $n_4,n_2\in S_{a}^{-}$, it follows that $\lk (cav,\D)$ is the union of two distinct paths: $P_1:=P(b,p_4,\dots ,p_2,u)$ and  $P'_1:=P(b,n_4,\dots ,n_2,u)$, intersecting along the vertices $b$ and $u$. Similarly, $\lk (bav,\D)$ is the union of two distinct paths: $P_2:=P(c,p_4,\dots ,p_3,u)$ and $P_2':=P(c,n_4,\dots ,n_3,u)$; and $\lk (uav,\D)$ is the union of  two distinct paths: $P_3:=P(b,p_3,\dots ,p_2,c)$ and $ P_3':=P(b,n_3,\dots ,n_2,c)$, intersecting at the vertices in $\{c,u\}$ and $\{b,c\}$, respectively. Moreover, the paths $P_1,P_2$ and $P_3$ belong to $S_{a}^{+}$, while the other three paths, $P_1',P_2'$ and $P_3'$ belong to $S_{a}^{-}$.

Observe that the path $P_2$ belongs to $\lk (b,\D)$, and since $\p(\s-b)$ separates $\lk (b,\D)$ into two parts, $S_{b}^{+}$ and $S_{b}^{-}$, the path $P_2$ is entirely contained in either $S_{b}^{+}$ or $S_{b}^{-}$. Without loss of generality, we assume that $P_2$, i.e., $P(c,p_4,\dots ,p_3,u)$ is in $S_{b}^{+}$. Then, $p_4,p_3\in S_{b}^{+}$ and $n_4,n_3\in S_{b}^{-}$. Now, from the hypothesis in the first paragraph, $\lk (b,\D)$ contains two distinct paths: $P_4:=P(v,p_4,\dots , p_5,u)$ and $P_4':= P(v,n_4,\dots , n_5,u)$, which intersect along the vertices in $\{v,u\}$. Moreover, $p_4\in S_{b}^{+}$ implies that $p_5\in S_{b}^{+}$, and $n_4\in S_{b}^{-}$ implies that $n_5\in S_{b}^{-}$. Without loss of generality, assume that  $p_1\in S_{b}^{+}$ and $n_1\in S_{b}^{-}$. Then, we have $\{p_1,p_4,p_3,p_5\}\subseteq S_{b}^{+}$ and $\{n_1,n_4,n_3,n_5\}\subseteq S_{b}^{-}$.

Since $\lk (cbv,\D)$ is a circle and $p_1,n_1,p_4,n_4\in\lk (cbv,\D)$ with $p_1,p_4\in S_{b}^{+}$ and $n_1,n_4\in S_{b}^{-}$, it follows that $\lk (cbv,\D)$ is a circle that is the union of two distinct paths: $P_5:=P(u,p_1,\dots,p_4,a)$ and $P_5':=P(u,n_1,\dots,n_4,a)$, intersecting along the vertices in $\{a,u\}$. Similarly, $\lk (ubv,\D)$ is the union of two distinct paths: $P_6:=P(c,p_1,\dots,p_3,a)$ and $ P_6':=P(c,n_1,\dots,n_3,a)$, intersecting along the vertices in $\{a,c\}$.

\begin{figure}[ht]
\tikzstyle{ver}=[]
\tikzstyle{vertex}=[circle, draw, fill=black!100, inner sep=0pt, minimum width=4pt]
\tikzstyle{edge} = [draw,thick,-]
\centering
\begin{tikzpicture}[scale=1]
\begin{scope}[shift={(0,0)}]
\foreach \x/\y/\z in {-1.5/0/c,2.9/0/v,0/1.7/b, -0.6/-1.8/d, 0.2/.95/p4,-.6/.3/p5,1/.3/p3,-.15/-.7/p2}{\node[vertex] (\z) at (\x,\y){};}
\foreach \x/\y/\z in {-1.7/0/c,3.1/0/v,0/2/b, -0.7/-2.1/u, 0.3/.7/p_4,-.3/.2/p_5,1/.1/p_3,-.15/-.9/p_2}{
\node[ver] () at (\x,\y){$\z$};}

\foreach \x/\y in {v/b,v/c,v/d,b/c,d/c,b/d}{\path[edge] (\x) -- (\y);}
\foreach \x/\y in {p3/p4,p4/p2,p4/p5,p3/p2,p3/p5,p5/p2}{\draw[densely dotted] (\x) -- (\y);}
\foreach \x/\y in {v/p3,v/p4,v/p2,d/p2,d/p5,d/p3,c/p2,c/p5,c/p4,b/p5,b/p3,b/p4}{\draw[loosely dotted] (\x) -- (\y);}
%\foreach \x/\y in {c/f}{\draw[dotted] (\x) -- (\y);}

%\node[ver] () at (1.5,-0.7){7(5)};

\end{scope}

\begin{scope}[shift={(6,0)}]
\foreach \x/\y/\z in {-1.5/0/c,2.9/0/v,0/1.7/b, -0.6/-1.8/d, 0.2/.95/p4,-.6/.3/p5,1/.3/p3,-.15/-.7/p2}{\node[vertex] (\z) at (\x,\y){};}
\foreach \x/\y/\z in {-1.7/0/c,3.1/0/v,0/2/b, -0.7/-2.1/u, 0.3/.7/n_4,-.3/.2/n_5,1/.1/n_3,-.15/-.9/n_2}{
\node[ver] () at (\x,\y){$\z$};}

\foreach \x/\y in {v/b,v/c,v/d,b/c,d/c,b/d}{\path[edge] (\x) -- (\y);}
\foreach \x/\y in {p3/p4,p4/p2,p4/p5,p3/p2,p3/p5,p5/p2}{\draw[densely dotted] (\x) -- (\y);}
\foreach \x/\y in {v/p3,v/p4,v/p2,d/p2,d/p5,d/p3,c/p2,c/p5,c/p4,b/p5,b/p3,b/p4}{\draw[loosely dotted] (\x) -- (\y);}
%\foreach \x/\y in {c/f}{\draw[dotted] (\x) -- (\y);}

%\node[ver] () at (-3,-3){$\lk (a,\D)$};

\end{scope}
%\begin{scope}[shift={(9.5,0)}]
%\foreach \x/\y/\z in {1.6/-.5/d,0.3/.35/t,0.3/1.6/r,2.9/1.6/a,2.9/.35/b,1.6/2.45/s, 4/-0.3/q,7/-0.3/c,5.5/2.3/p}{\node[vertex] (\z) at (\x,\y){};}
%\foreach \x/\y/\z in {1.6/-.9/d,1.6/2.9/s,0/.35/t,0/1.5/r,3.3/1.5/a,3.2/.35/b, 3.7/-.55/q,7.25/-.5/c,5.5/2.8/p,3/-1.8/(ii)}{
%\node[ver] () at (\x,\y){$\z$};}
%
%\foreach \x/\y in {a/b,d/b,a/s,s/r,r/t,t/d,p/q,p/c,c/q}{\path[edge] (\x) -- (\y);}
%%\foreach \x/\y in {a/e,e/f}{\draw[loosely dotted] (\x) -- (\y);}
%\foreach \x/\y in {r/b,s/d,a/t}{\draw[densely dotted] (\x) -- (\y);}
%%\foreach \x/\y in {c/f}{\draw[dotted] (\x) -- (\y);}
%
%%\node[ver] () at (1.5,-0.7){7(5)};
%
%\end{scope}
\end{tikzpicture}
\caption{$\lk (a,\D)$} \label{fig:1}
%\caption{All possible types of $D_v{u}$ where $u$ is a vertex in $K$ such that $v\in lk(u,K)\cong\mathbb{S}^2$}\label{fig:1}
\end{figure}

Using similar arguments as in the last two paragraphs, and assuming that $P_5$ is in $S_{c}^{+}$, we have $p_1, p_4\in S_{c}^{+}$ and $n_1, n_4\in S_{c}^{-}$. Now, from the hypothesis in the first paragraph, it follows that $\lk (dcv,\D)$ is a circle, which is the union of two distinct paths: $P_7:=P(b,p_1,\dots, p_2, a)$ and $ P'_7:= P(b,n_1,\dots,n_2, a)$. Moreover, $p_1\in  S_{c}^{+}$ implies that $P_7$ is contained in $S_{c}^{+}$, and $P'_7$ is contained in $S_{c}^{-}$.

\begin{figure}[ht]
\tikzstyle{ver}=[]
\tikzstyle{vertex}=[circle, draw, fill=black!100, inner sep=0pt, minimum width=4pt]
\tikzstyle{edge} = [draw,thick,-]
\centering
\begin{tikzpicture}[scale=1]
\begin{scope}[shift={(0,0)}]
\foreach \x/\y/\z in {-1.5/0/c,2.9/0/v,0/1.7/b, -0.6/-1.8/d, 0.2/.95/p4,-.6/.3/p5,1/.3/p3,-.15/-.7/p2}{\node[vertex] (\z) at (\x,\y){};}
\foreach \x/\y/\z in {-1.7/0/c,3.1/0/a,0/2/b, -0.7/-2.1/u, 0.3/.7/p_4,-.3/.2/p_1,1/.1/p_3,-.15/-.9/p_2}{
\node[ver] () at (\x,\y){$\z$};}

\foreach \x/\y in {v/b,v/c,v/d,b/c,d/c,b/d}{\path[edge] (\x) -- (\y);}
\foreach \x/\y in {p3/p4,p4/p2,p4/p5,p3/p2,p3/p5,p5/p2}{\draw[densely dotted] (\x) -- (\y);}
\foreach \x/\y in {v/p3,v/p4,v/p2,d/p2,d/p5,d/p3,c/p2,c/p5,c/p4,b/p5,b/p3,b/p4}{\draw[loosely dotted] (\x) -- (\y);}
%\foreach \x/\y in {c/f}{\draw[dotted] (\x) -- (\y);}

%\node[ver] () at (1.5,-0.7){7(5)};

\end{scope}

\begin{scope}[shift={(6,0)}]
\foreach \x/\y/\z in {-1.5/0/c,2.9/0/v,0/1.7/b, -0.6/-1.8/d, 0.2/.95/p4,-.6/.3/p5,1/.3/p3,-.15/-.7/p2}{\node[vertex] (\z) at (\x,\y){};}
\foreach \x/\y/\z in {-1.7/0/c,3.1/0/a,0/2/b, -0.7/-2.1/u, 0.3/.7/n_4,-.3/.2/n_1,1/.1/n_3,-.15/-.9/n_2}{
\node[ver] () at (\x,\y){$\z$};}

\foreach \x/\y in {v/b,v/c,v/d,b/c,d/c,b/d}{\path[edge] (\x) -- (\y);}
\foreach \x/\y in {p3/p4,p4/p2,p4/p5,p3/p2,p3/p5,p5/p2}{\draw[densely dotted] (\x) -- (\y);}
\foreach \x/\y in {v/p3,v/p4,v/p2,d/p2,d/p5,d/p3,c/p2,c/p5,c/p4,b/p5,b/p3,b/p4}{\draw[loosely dotted] (\x) -- (\y);}
%\foreach \x/\y in {c/f}{\draw[dotted] (\x) -- (\y);}

%\node[ver] () at (-3,-3){$\lk (v,\D)$};

\end{scope}
%\begin{scope}[shift={(9.5,0)}]
%\foreach \x/\y/\z in {1.6/-.5/d,0.3/.35/t,0.3/1.6/r,2.9/1.6/a,2.9/.35/b,1.6/2.45/s, 4/-0.3/q,7/-0.3/c,5.5/2.3/p}{\node[vertex] (\z) at (\x,\y){};}
%\foreach \x/\y/\z in {1.6/-.9/d,1.6/2.9/s,0/.35/t,0/1.5/r,3.3/1.5/a,3.2/.35/b, 3.7/-.55/q,7.25/-.5/c,5.5/2.8/p,3/-1.8/(ii)}{
%\node[ver] () at (\x,\y){$\z$};}
%
%\foreach \x/\y in {a/b,d/b,a/s,s/r,r/t,t/d,p/q,p/c,c/q}{\path[edge] (\x) -- (\y);}
%%\foreach \x/\y in {a/e,e/f}{\draw[loosely dotted] (\x) -- (\y);}
%\foreach \x/\y in {r/b,s/d,a/t}{\draw[densely dotted] (\x) -- (\y);}
%%\foreach \x/\y in {c/f}{\draw[dotted] (\x) -- (\y);}
%
%%\node[ver] () at (1.5,-0.7){7(5)};
%
%\end{scope}
\end{tikzpicture}
\caption{$\lk (v,\D)$} \label{fig:2}
%\caption{All possible types of $D_v{u}$ where $u$ is a vertex in $K$ such that $v\in lk(u,K)\cong\mathbb{S}^2$}\label{fig:1}
\end{figure} 

Now, there are six pairs of paths, namely:
\begin{enumerate}[$(i)$]

 \item $P_1$ and  $P'_1$ in $\lk (cav,\D)$,
 
 \item  $P_2$ and $P'_2$ in $\lk (bav,\D)$,
 
 \item  $P_3$ and $ P'_3$ in $\lk (uav,\D)$,
 
 \item $P_5$ and $P'_5$ in $\lk (cbv,\D)$,
 
 \item  $P_6$ and $ P'_6$ in $\lk (ubv,\D)$, and

  \item $P_7$ and $ P'_7$ in $\lk (cuv,\D)$,

 \end{enumerate}
 where each path in a pair intersects at exactly two vertices, and the intersecting vertices belong to $\{u, a,b,c\}$. Moreover, a pair of paths $P_i$ and $P'_j$ may intersect, but in such cases, one can take an appropriate subdivision of an edge to obtain a two-sided neighborhood. Therefore, $\p(\s-v)$ is two-sided in $\lk (v,\D)$.
\end{proof}   
\begin{Corollary}\label{4 dim two sided}
Let $\D$ be a normal $4$-pseudomanifold and $\s$ be a missing facet in $\D$ such that, for each $x\in V(\s)\setminus\{v\}$, $\p(\s-x)$ separates $\lk (x,\D)$ into two parts. Then, the $2$-dimensional sphere $\p(\s-v)$ is two sided in $\lk (v,\D)$.
\end{Corollary}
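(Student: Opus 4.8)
The plan is to derive this statement as an immediate consequence of Proposition \ref{4 dim 2-vertex two sided}, observing that the separation hypotheses assumed here are in fact stronger than what that proposition requires. The key point is that Proposition \ref{4 dim 2-vertex two sided} needs the separation condition at only three of the five vertices of $\s$, namely at $V(\s)\setminus\{u,v\}=\{a,b,c\}$, whereas the present corollary assumes separation at all four vertices of $V(\s)\setminus\{v\}$. So the corollary should follow simply by specializing, after fixing a suitable labelling of the vertices.

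Concretely, I would first record that since $\s$ is a missing facet of the normal $4$-pseudomanifold $\D$, it is a $4$-simplex with exactly five vertices, one of which is the distinguished vertex $v$. I would then choose an arbitrary vertex $u\in V(\s)\setminus\{v\}$ and relabel the three remaining vertices as $a,b,c$, so that $\s=uvabc$ matches the notation of Proposition \ref{4 dim 2-vertex two sided}. Since $\{a,b,c\}=V(\s)\setminus\{u,v\}\subseteq V(\s)\setminus\{v\}$, the hypothesis of the corollary guarantees in particular that $\p(\s-x)$ separates $\lk(x,\D)$ into two parts for each $x\in\{a,b,c\}$, which is precisely the assumption required by Proposition \ref{4 dim 2-vertex two sided}. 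Invoking that proposition then yields that the $2$-sphere $\p(\s-v)$ is two-sided in $\lk(v,\D)$, as claimed.

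I do not expect any genuine obstacle here, since the corollary merely discards the (unused) separation hypothesis at $u$ present in the stronger assumption. The only step requiring a moment of care is the bookkeeping involved in the relabelling: after singling out an arbitrary $u$, one must confirm that the remaining three vertices still lie in $V(\s)\setminus\{v\}$, so that the separation assumptions consumed by Proposition \ref{4 dim 2-vertex two sided} are automatically available. This holds trivially because the corollary assumes separation at every vertex other than $v$, and hence the conclusion follows at once.
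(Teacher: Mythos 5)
Your proposal is correct and matches the paper's approach exactly: the paper states this corollary without a separate proof, treating it as an immediate specialization of Proposition~\ref{4 dim 2-vertex two sided}, whose argument only consumes the separation hypothesis at the three vertices $a,b,c$. Your relabelling step (choosing an arbitrary $u\in V(\s)\setminus\{v\}$ and noting that the corollary's stronger hypothesis supplies everything the proposition needs) is precisely the intended reasoning.
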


\begin{Lemma}\label{4-dim vertex folding}
Let $\D$ be a normal $4$-pseudomanifold and $\tau:=vabcd$ be a missing facet in $\D$ such that, for each $x\in V(\tau)\setminus\{v\}$, $\p(\tau-x)$ separates $\lk (x,\D)$ into two parts, but $\p(\tau-v)$ does not separate $\lk (v,\D)$ into two parts. Then $\D$ is obtained from a normal $4$-pseudomanifold $\D'$ by a vertex folding at the vertex $v$.
\end{Lemma}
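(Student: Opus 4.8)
The plan is to reverse the vertex folding by first reconstructing the link of $v$ and then propagating the resulting splitting of the vertices $a,b,c,d$ across the whole complex, mirroring the dimension-$3$ argument of Proposition \ref{3-dim vertex folding} one dimension higher. The guiding picture is that a vertex folding at $v$ turns $\lk (v,\D')$ into $\lk (v,\D)$ by a handle addition (so $\p(\tau-v)$ becomes non-separating there), while it turns each $\lk (x_1,\D')\sqcup\lk (x_2,\D')$ into $\lk (x,\D)$ by a connected sum (so $\p(\tau-x)$ separates), for $x\in\{a,b,c,d\}$; the hypotheses are exactly the shadow of this picture, and the task is to run it backwards.

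First I would record that the $2$-sphere $\p(\tau-v)=\p(abcd)$ is two-sided in $\lk (v,\D)$: this is exactly Corollary \ref{4 dim two sided}, whose hypothesis (separation of $\lk (x,\D)$ by $\p(\tau-x)$ for every $x\neq v$) is granted. Now $\lk (v,\D)$ is a normal $3$-pseudomanifold in which $abcd$ is a missing $3$-simplex with two-sided boundary, so Proposition \ref{2-sided lemma 3.3} applies inside the link. Because $\p(abcd)$ does not separate $\lk (v,\D)$ by hypothesis, the operation cannot be a connected sum; hence $\lk (v,\D)$ is a handle addition $L^{\phi}$ of a single normal $3$-pseudomanifold $L$. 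Equivalently, cutting $\lk (v,\D)$ along the two-sided non-separating sphere $\p(abcd)$ and capping the two resulting boundary spheres with tetrahedra produces a connected normal $3$-pseudomanifold $L$ carrying two distinguished facets $T_1=a_1b_1c_1d_1$ and $T_2=a_2b_2c_2d_2$, where each of $a,b,c,d$ has been doubled into a pair $x_1,x_2$ lying on the two sheets. This $L$ is the intended $\lk (v,\D')$.

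Next I would globalize the splitting to define $\D'$. I set $V(\D')=(V(\D)\setminus\{a,b,c,d\})\cup\{a_1,a_2,b_1,b_2,c_1,c_2,d_1,d_2\}$ and keep $v$ fixed. Every facet of $\D$ containing $v$ has the form $v\star G$ with $G\in\lk (v,\D)$; lifting $G$ through the handle reversal to a facet $\widetilde G$ of $L$ and taking $v\star\widetilde G$ assigns it to $\D'$. For a facet $F$ of $\D$ not containing $v$, I would use the separations of $\lk (x,\D)$ into $S_x^{+}$ and $S_x^{-}$ supplied by the proof of Proposition \ref{4 dim 2-vertex two sided}: each $x\in V(F)\cap\{a,b,c,d\}$ is relabelled $x_1$ or $x_2$ according to whether $F-x$ lies in $S_x^{+}$ or $S_x^{-}$, and faces avoiding $\{a,b,c,d\}$ are carried over unchanged. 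The essential point---and this is where the coherent $p$/$n$ labelling from that proof is used---is that these choices are mutually compatible, so that a facet meeting several of $a,b,c,d$ receives a single well-defined sheet and $\D'$ is an honestly defined simplicial complex.

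Finally I would verify that $\D'$ is a normal $4$-pseudomanifold with $\lk (v,\D')=L$, that the bijection $\psi\colon\s_1\to\s_2$ between $\s_1=va_1b_1c_1d_1$ and $\s_2=va_2b_2c_2d_2$ fixing $v$ and sending $x_1\mapsto x_2$ is vertex-folding admissible, and that $(\D')^{\psi}_v=\D$ with $abcd$ the image of the removed facet. Admissibility is the heart of the matter: for each $y\in\{a_1,b_1,c_1,d_1\}$ the only length-two path from $y$ to $\psi(y)$ must pass through $v$. This is precisely where the separation hypotheses at $a,b,c,d$ are indispensable---since \emph{every} one of $a,b,c,d$ is split, the two sheets share, apart from $v$, no common neighbour, so $x_1$ and $x_2$ have $v$ as their unique common neighbour. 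The main obstacle I anticipate is exactly this bookkeeping: proving that the sheet assignment is globally consistent and that no unintended identifications occur, so that $\D'$ is a genuine normal $4$-pseudomanifold and folding it at $v$ returns $\D$.
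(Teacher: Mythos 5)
Your overall strategy coincides with the paper's: establish two-sidedness of $\p(abcd)$ in $\lk (v,\D)$ via Corollary \ref{4 dim two sided}; recognize $\lk (v,\D)$ as a handle addition (the paper invokes a lemma of Bagchi--Datta directly, you invoke Proposition \ref{2-sided lemma 3.3} together with non-separation --- same content); double $a,b,c,d$ into two sheets using the separations $S_x^{\pm}$; cone the vertex $v$ back over the resulting boundary; and verify normality and folding-admissibility. Your admissibility sketch (the two sheets share no common neighbour other than $v$) and your link verifications match the paper's, which checks $\lk (x,\D)=\lk (x,\D')\,\#_{\psi}\,\lk (x',\D')$ for $x\in\{a,b,c,d\}$ and a relabelling description for all other vertices.

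There is, however, a genuine gap at exactly the point you flag but do not resolve: the well-definedness of the sheet assignment for a simplex meeting $\{a,b,c,d\}$ in two or more vertices. You assert that the ``coherent $p$/$n$ labelling'' from the proof of Proposition \ref{4 dim 2-vertex two sided} makes the choices mutually compatible, but that proof only produces a consistent labelling of the ten vertices $p_1,\dots,p_5,n_1,\dots,n_5$ lying in the links of the $3$-faces of a missing facet of the shape $uvabc$ (with separation hypotheses at only three vertices); it says nothing about an arbitrary simplex $\s=\gamma\star xy$ with $x,y\in\{a,b,c,d\}$ and $\gamma$ away from $\tau$. The paper closes precisely this gap with a dedicated argument: writing $\lk (abcd,\D)=\{p,n\}$ and normalizing $p\in S_z^{+}$, $n\in S_z^{-}$ for every $z\in\{a,b,c,d\}$, it observes that since $\p(\tau-x)$ separates $\lk (x,\D)$, the sphere $\p(\tau-xy)$ separates $\lk (xy,\D)$ into two parts contained exclusively in $S_x^{+}$ and $S_x^{-}$, and likewise for $y$; the simplex $p\star(abcd-xy)$ lies in the $+$ portion of both separations and thereby anchors the two labellings to one another, forcing $\gamma\in S_y^{+}$ whenever $\gamma\in S_x^{+}$ (equivalently, $E_{\s}\subseteq E^{+}$ or $E_{\s}\subseteq E^{-}$ for every such $\s$). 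Without this edge-link argument, your complex $\D'$ is not known to be well defined, and since you yourself name this bookkeeping as the ``main obstacle'' rather than proving it, the proposal as written is an accurate plan but not a complete proof; supplying the argument above completes it, after which the rest of your outline goes through as in the paper.
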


\begin{proof}
 Since $abcd \in \D$, its link consists of two vertices, say $p$ and $n$. Given that, for every vertex $x \in\{a,b,c, d\}$, the boundary complex $\partial(\tau- x)$ separates $\lk (x,\D)$ into two parts, say $S_{x}^{+}$ and $S_{x}^{-}$, where $S_{x}^{+}$ and $S_{x}^{-}$ are normal $3$-pseudomanifolds with boundary, having the common boundary complex $\partial(\tau - x)$. Moreover, we assume that $ p \in S_{x}^{+}$ and $n \in S_{x}^{-}$  for every vertex $x \in\{a,b,c, d\}$. Let us denote the sets of vertices $V(S_{x}^{+})\setminus V(\t)$ and $V(S_{x}^{-})\setminus V(\t)$ by $V_{x}^{+}$ and $V_{x}^{-}$, respectively. Finally, let $E_{x}^{ \pm}=\left\{w x: w \in V_{x}^{ \pm}\right\}$, and define $E^{ \pm}=E_{a}^{ \pm} \cup E_{b}^{ \pm} \cup E_{c}^{ \pm} \cup E_{d}^{ \pm}$. By definition, $E^{+} \cap E^{-}=\emptyset$.

Let $\sigma \nleq abcd$ be a simplex in $\Delta$ that does not contain the vertex $v$, but has a non-empty intersection with $abcd$. Let $E_{\sigma}$ be the set of edges of $\sigma$ that have exactly one endpoint in $abcd$. Then $E_{\sigma} \subseteq E^{+} \cup E^{-}$. In order to construct $\Delta'$, we aim to show that $E_{\sigma} \subset E^{+}$ or $E_{\sigma} \subset E^{-}$. 

If a face $\sigma$ intersects $abcd$, then for some $x \in\{a, b, c, d\}$, we have $ \sigma \in \st (x,\D)$. Therefore, $V(\sigma) \setminus\{a, b, c, d\} \subset V_{x}^{+}$ or $V_{x}^{-}$. We need to show that if $\s$ is a simplex in $\D$ such that $xy\leq \s$, where $x$ and $y$ are two vertices belonging to $\{a,b,c,d\}$, then $V(\sigma) \setminus\{a, b, c, d\} \subset V_{x}^{+}$ implies $V(\sigma) \setminus\{a, b, c, d\} \subset V_{y}^{+}$. To prove this, let $\sigma = \gamma \star xy $ be such a simplex in $\Delta$, where  $V(\sigma) \setminus \{a, b, c, d\} \subset V_{x}^{+}$. Then $\gamma \in \lk (xy,\D)$. Since $\p(\t-x)$ separates $\lk (x,\D)$ into two parts, the boundary complex $\p(\t-xy)$ also separates $\lk (xy,\D)$ into two parts, which are exclusively contained in $S_x^{+}$ and $S_x^{-}$, respectively. Furthermore, $\lk (abcd,\D)=\{p,n\}$ implies that one of the portions contains the vertex $p$, and the other portion contains the vertex $n$. Since $V(\sigma) \setminus\{a, b, c, d\} \subset V_{x}^{+}$, both simplices $p\star (abcd-xy)$ and $\gamma$ belong to the portion of $\lk (xy,\D)$ contained in $S_x^{+}$. On the other hand, the fact that $\lk (y,\D)$ is separated into two parts by $\p(\t-y)$ implies that one of the two portions of $\lk (xy,\D)$, while $\lk (xy,\D)$ is separated into two parts by $\p(\t-xy)$, is exclusively contained in $S_y^{+}$ and the other portion is exclusively contained in $S_y^{-}$. Moreover, both simplices $p\star (abcd-xy)$ and $\gamma$ belong to the same portion of $\lk (xy,\D)$, while separated by $\p(\t-xy)$. However, since $p\in S_y^{+}$, we have $p\star (abcd-xy)\in S_y^{+}$. Therefore, the simplex $\gamma$ also lie in the portion of $\lk (xy,\D)$ contained in $S_y^{+}$. Hence, $\gamma\in S_y^{+}$, and we conclude that $V(\sigma) \setminus\{a, b, c, d\} \subset V_{y}^{+}$. Therefore, $E_{\sigma}$ is a subset of exactly one of $E^{+}$ and $E^{-}$.

 Let $\tilde{\Delta}= \D[V(\Delta) \setminus\{v\}]$. Then, $\tilde{\Delta}$ is a normal $4$-pseudomanifold with boundary $\lk (v,\D)$. In our construction of $\Delta'$, we define an intermediate complex $\tilde{\Delta}'$, which is closely related to $\tilde{\Delta}$ and has vertex set $V(\tilde{\Delta}) \cup \{a',b', c', d'\}$, where $a',b', c'$, and $d'$ are four new vertices. Let $\tilde{\sigma}$ be a simplex in $\tilde{\Delta}$ that is not a face of $abcd$. We define $\tilde{\sigma}'$ to be $\tilde{\sigma}$ if $\tilde{\sigma}$ does not intersect $abcd$ or $E_{\tilde{\sigma}} \subset E^{+}$. If $E_{\tilde{\sigma}} \subset E^{-}$, then $\tilde{\sigma}'$ is defined to be $\tilde{\sigma}$ with all occurrences of $a, b, c$, and $d$ replaced by $a', b', c'$, and $d'$, respectively. Now define $\tilde{\Delta}'$ to be the simplicial complex whose facets are $\{\tilde{\sigma}': \tilde{\sigma} \hspace{.2cm}\text{ is a facet of}\hspace{.2cm} \tilde{\Delta}\} \cup \{na' 
 b'c'd'\}$.

 The $3$-simplex $a b c d$ of $\tilde{\Delta}'$ is contained in exactly one facet, namely $p a b c d$. Similarly, the $3$-simplex $a' b' c' d'$ is contained in exactly one facet, namely $n a' b' c' d'$. All other $3$-simplices $\tilde{\sigma}'$ of $\tilde{\Delta}'$ are contained in one or two facets in $\tilde{\Delta}'$, depending on whether $\tilde{\sigma}$ was contained in one or two facets in $\tilde{\Delta}$.

 By Corollary \ref{4 dim two sided}, the 2-dimensional sphere $\p(abcd)$ is two-sided in $\lk (v,\D)$. Since $\partial(a b c d)$ does not separate $\lk (v,\D)$, by Lemma 3 of \cite{BagchiDatta}, $\lk v$ was formed via handle addition. Let $\lk (v,\D)=S^{\psi}$, where $S$ is a normal $3$-pseudomanifold and the image of the domain of $\psi$ is $\{a,b,c,d\}$. Hence $\tilde{\Delta}'$ is a pseudomanifold with boundary $S$. Finally, set $\Delta'$ to be $\tilde{\Delta}'$ with its boundary coned off by $v$. Therefore, $\Delta'$ is a pseudomanifold and $\lk (v,\Delta')=S$. Furthermore, from the construction, $\Delta=\left(\Delta'\right)^{\psi}_{v}$, where $\psi:\left\{v, a', b', c', d'\right\} \rightarrow\{v, a, b, c, d\}$ is an admissible map for vertex folding. Two vertices, say $x$ and $y$, in $\Delta'$ can be connected by a path by concatenating the lift of a path in $\Delta$ from $x$ to $v$, and then from $v$ to $y$. Thus $\Delta'$ is connected. To finish the proof we need to show that the link of every vertex $\D'$, except $v$ (that has already been shown), is a normal $3$-pseudomanifold.

 Let $x \in\{a,b, c, d\}$. Since $\lk (x,\D)= \lk (x,\Delta') \#{ }_{\psi} \lk (x',\Delta')$, and $\lk (x,\D)$ is a  normal $3$-pseudomanifold, each of the links $\lk (x,\Delta')$ and $\lk (x',\Delta')$ must be a normal $3$-pseudomanifold. Finally, suppose that $w$ is a vertex of $\Delta'$ other than those in $\{v, a, b, c, d, a',$ $ b', c',d'\}$. Then the link of $w$ in $\Delta'$ is the same as the link of $w$ in $\Delta$, except that for each vertex $x\in \{a, b, c, d\}$ with $w x \in E^{-}$, $x$ is replaced by $x'$ in the link of $w$ in $\Delta'$. Thus, the link of $w$ in $\Delta'$ is a normal $3$-pseudomanifold, and hence $\Delta'$ is a normal $4$-pseudomanifold.
\end{proof}

\begin{Lemma}\label{4-dim edge folding}
Let $\D$ be a normal $4$-pseudomanifold, and let $\s=uvabc$ be a missing facet in $\D$ such that, for each $x\in\{a,b,c\}$, $\p(\s-x)$ separates $\lk (x,\D)$ into two parts, but for each $y\in \{u,v\}$, $\p(\s-y)$ does not separate $\lk (y,\D)$ into two parts. Further, assume that $\lk (uv,\D)$ is not separated into two parts by $\partial(abc)$. Then $\D$ is obtained by an edge folding from a normal $4$-pseudomanifold along the edge $uv$.
\end{Lemma}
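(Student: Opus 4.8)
The plan is to follow the proof of Lemma~\ref{4-dim vertex folding} closely, now treating $\{a,b,c\}$ as the vertices to be split and the edge $uv$ as the fixed face of the fold. First I record the structures forced by the hypotheses. Since $\s=uvabc$ is a missing facet, $abc\in\D$ and $\lk(abc,\D)$ is a circle containing $u$ and $v$ but missing the edge $uv$; it is therefore the union of two arcs $A^{+}$ and $A^{-}$ joining $u$ to $v$, and these will play the role of the two points $p,n$ of Lemma~\ref{4-dim vertex folding}. For each $x\in\{a,b,c\}$ the hypothesis that $\p(\s-x)$ separates $\lk(x,\D)$ gives $\lk(x,\D)=S_x^{+}\cup S_x^{-}$ with common boundary $\p(\s-x)$, oriented so that $A^{+}$ lies on the $+$ side and $A^{-}$ on the $-$ side for all three $x$ simultaneously; the consistency of this choice is exactly the circle-and-path bookkeeping carried out in the proof of Proposition~\ref{4 dim 2-vertex two sided}. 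Next I establish that $\p(abc)$ is two-sided and non-separating in the surface $\lk(uv,\D)$: by Proposition~\ref{4 dim 2-vertex two sided} the sphere $\p(\s-v)=\p(uabc)$ is two-sided in $\lk(v,\D)$, and passing to the link at $u$ shows $\p(abc)=\lk(u,\p(uabc))$ is two-sided in $\lk(uv,\D)=\lk(u,\lk(v,\D))$, while non-separation is the third hypothesis. Proposition~\ref{2-sided lemma 3.3}, applied to the surface $\lk(uv,\D)$ with the missing triangle $abc$, then presents $\lk(uv,\D)$ as a handle addition on a normal $2$-pseudomanifold $S_{uv}$ in which the two removed triangles are $abc$ and $a'b'c'$, which fixes the identifications $a\sim a'$, $b\sim b'$, $c\sim c'$ used to define $\psi$ with $\psi(u)=u$ and $\psi(v)=v$.

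I would then carry out the global relabeling exactly as in Lemma~\ref{4-dim vertex folding}. Setting $V_x^{\pm}=V(S_x^{\pm})\setminus V(\s)$, $E_x^{\pm}=\{wx:w\in V_x^{\pm}\}$ and $E^{\pm}=E_a^{\pm}\cup E_b^{\pm}\cup E_c^{\pm}$, one has $E^{+}\cap E^{-}=\emptyset$, and the key coherence statement is that whenever $\sigma=\gamma\star xy\in\D$ with $x,y\in\{a,b,c\}$, the condition $V(\sigma)\setminus V(\s)\subset V_x^{+}$ forces $V(\sigma)\setminus V(\s)\subset V_y^{+}$. The proof is the same as in Lemma~\ref{4-dim vertex folding}: $\p(\s-xy)$ splits $\lk(xy,\D)$ into two portions, one inside $S_x^{+}$ and the other inside $S_x^{-}$, and likewise for $y$; the reference side $A^{+}$ distinguishes the $+$ portion in both splittings, so a face cannot switch sides. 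Consequently every simplex $\sigma$ that meets $abc$ without being a face of it has its set of cross edges $E_{\sigma}$ contained entirely in $E^{+}$ or entirely in $E^{-}$, and I relabel each $-$-side occurrence of $a,b,c$ by $a',b',c'$.

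To build $\D'$ rigorously I would reuse the delete-and-cone device of Lemma~\ref{4-dim vertex folding}, deleting the single vertex $v$. Put $\tilde\D=\D[V(\D)\setminus\{v\}]$, a normal $4$-pseudomanifold with boundary $\lk(v,\D)$, and form $\tilde\D'$ from $\tilde\D$ by the relabeling above, introducing the split triangle $a'b'c'$. The boundary of $\tilde\D'$ is the vertex-unfolding of $\lk(v,\D)$ at $u$: indeed the restriction of the fold to $\lk(v,\cdot)$ is a vertex folding at $u$, since in the normal $3$-pseudomanifold $\lk(v,\D)$ the tetrahedron $uabc$ is missing, $\p(uabc-x)$ separates $\lk(vx,\D)$ for each $x\in\{a,b,c\}$ (take the link at $v$ of the splitting of $\lk(x,\D)$), and $\p(abc)$ does not separate $\lk(uv,\D)$; thus Proposition~\ref{3-dim vertex folding} yields a normal $3$-pseudomanifold $\lk(v,\D')$ with $\lk(v,\D)=\bigl(\lk(v,\D')\bigr)^{\psi}_{u}$. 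Coning this boundary off by $v$ produces $\D'$, whose facets include $uvabc$ and $uva'b'c'$ meeting precisely along $uv$. Connectedness of $\D'$ follows by lifting paths through $uv$, and the links are checked as before: $\lk(x,\D)=\lk(x,\D')\,\#_{\psi}\,\lk(x',\D')$ for $x\in\{a,b,c\}$ forces $\lk(x,\D')$ and $\lk(x',\D')$ to be normal $3$-pseudomanifolds, the link of any vertex away from $V(\s)\cup\{a',b',c'\}$ only undergoes the $-$-side relabeling, and $\lk(u,\D),\lk(v,\D)$ are vertex foldings of $\lk(u,\D'),\lk(v,\D')$ by Proposition~\ref{3-dim vertex folding}. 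Reading the identifications backwards shows $\psi$ is edge-folding admissible and $\D=(\D')^{\psi}_{uv}$.

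The main obstacle, compared with the vertex-folding case, is that the fold is anchored on an edge rather than a vertex: there are two fixed vertices $u,v$ and simplices containing one of them together with split vertices of $\{a,b,c\}$, so one must verify that the $+/-$ assignment stays coherent across the seam and that the resulting fold is genuinely along the whole edge $uv$ rather than at a single endpoint. Concretely, the two delicate points are establishing the two-sidedness and handle-addition description of $\p(abc)$ inside the surface $\lk(uv,\D)$ (the two-dimensional analogue of the step in Lemma~\ref{4-dim vertex folding} that invokes Lemma~3 of \cite{BagchiDatta}), and checking edge-folding admissibility, namely that every path of length at most two from $x$ to $x'$ passes through $u$ or $v$, which rests on the coherence statement showing the two sides meet only along $uv$.
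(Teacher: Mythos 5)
Your proposal is correct, and in substance it runs on the same engine as the paper's proof: the $p_i/n_i$ side-bookkeeping inherited from Proposition~\ref{4 dim 2-vertex two sided}, the coherence argument of Lemma~\ref{4-dim vertex folding} showing that every simplex meeting $abc$ has its cross edges entirely in $E^{+}$ or entirely in $E^{-}$, and Proposition~\ref{3-dim vertex folding} to control the links of $u$ and $v$. Where you genuinely diverge is in how $\D'$ is assembled. The paper never deletes a vertex: it defines $\D'$ directly on the vertex set $(V(\D)\setminus\{a,b,c\})\sqcup\{a^{+},b^{+},c^{+},a^{-},b^{-},c^{-}\}$ via an explicit nine-case relabeling of the facets of $\D$ according to $V(\t)\cap V(\s)$, adjoins the two facets $uva^{+}b^{+}c^{+}$ and $uva^{-}b^{-}c^{-}$ by hand, reads off $\D=(\D')^{\psi}_{uv}$, and only at the very end invokes Proposition~\ref{3-dim vertex folding} to certify that $\lk (u,\D')$ and $\lk (v,\D')$ are normal $3$-pseudomanifolds (all other links being treated by the connected-sum and relabeling arguments, exactly as you do). Your delete-$v$-and-cone device instead reduces the four-dimensional edge unfolding to a three-dimensional vertex unfolding of $\lk (v,\D)$ at $u$: this buys a construction with no case analysis, with the facets $uvabc$ and $uva'b'c'$ arising automatically from the coning, at the price of an asymmetry between $u$ and $v$ and the extra check that the global $\pm$ relabeling of $\tilde{\D}$ restricts on $\p\tilde{\D}=\lk (v,\D)$ to precisely the unfolding furnished by Proposition~\ref{3-dim vertex folding} — which does hold, since the two portions of $\lk (vx,\D)$ cut out by $\p(\s-vx)$ lie in $S_x^{+}$ and $S_x^{-}$ respectively, the same observation the paper makes inside Lemma~\ref{4-dim vertex folding}. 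One auxiliary step of yours is dispensable and slightly under-justified: the claim that two-sidedness of $\p(uabc)$ in $\lk (v,\D)$ passes to two-sidedness of $\p(abc)$ in $\lk (uv,\D)$, feeding Proposition~\ref{2-sided lemma 3.3} in dimension $2$, needs an argument (a curve in a surface can a priori be one-sided, so the inheritance of the product neighborhood under taking links should be spelled out); but nothing downstream depends on it, since Proposition~\ref{3-dim vertex folding} already supplies the unfolded boundary and pins down the identifications $a\sim a'$, $b\sim b'$, $c\sim c'$, so you may simply drop that paragraph — which is, in effect, what the paper does.
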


\begin{proof}
 Let $\lk (uvbc,\D)=\{p_1,n_1\}, \lk (uvac,\D)=\{p_2,n_2\}, \lk (uvab,\D)=\{p_3,n_3\}, \lk (vabc,\D)$ $=\{p_4,n_4\}$, and $\lk (uabc,\D)=\{p_5,n_5\}$. For $x\in\{a,b,c\}$, $\p(\s-x)$ separates $\lk (x,\D)$ into two parts, say $S_x^{+}$ and $S_x^{-}$, where $S_x^{+}$ and $S_x^{-}$ are normal $3$-pseudomanifolds with common boundary $\p(\s-x)$. We choose the vertices $p_i$'s and $n_i$'s as in the proof of Proposition \ref{4 dim 2-vertex two sided}. In particular, for $x\in\{a,b,c\}$, we assume $p_i\in S_x^{+}$ and $n_i\in S_x^{-}$ for $1\leq i\leq 5$. Let us denote the sets $V(S_{x}^{+})\setminus V(\s)$ and $V(S_{x}^{-})\setminus V(\s)$ by $V_{x}^{+}$ and $V_{x}^{-}$, respectively.

 We define a new complex $\D'$, whose vertex set is given by $V(\D')=(V(\D)\setminus\{a,b,c\})\sqcup \{a^{+},b^{+},c^{+},a^{-},b^{-},c^{-}\}$. The collection of facets of $\D'$ is the set $\{\t':\t\text{ is a 4-simplex in }\D\} \cup \{uva^{+}b^{+}c^{+}, uva^{-}b^{-}c^{-}\}$, where $\t'$ is defined as follows:

 \begin{enumerate}[$(i)$]
     \item If $V(\t)\cap V(\s)=\emptyset$, then we take $\t'=\t$.

     \item If $V(\t)\cap V(\s)=\{a\}$, then the vertices of $\s$ other than $a$ lie either in $V_a^{+}$ or in $V_a^{-}$. We define $\t'=(\t-a)\star a^{+}$ (resp. $\t'=(\t-a)\star a^{-}$) if $V(\t)\setminus\{a\}\subseteq V_a^{+}$ (resp. $V(\t)\setminus\{a\}\subseteq V_a^{-}$). A similar construction for $\t'$ is considered when $V(\t)\cap V(\s)$ is $\{b\}$ or $\{c\}$.

      \item If $V(\t)\cap V(\s)=\{a,u\}$, then the other three vertices of $\t$ lie either in $V_a^{+}$ or in $V_a^{-}$. Therefore, we consider $\t'=(\t-a)\star a^{+}$ (resp. $\t'=(\t-a)\star a^{-}$) if $V(\t)\setminus\{a\}\subseteq V_a^{+}$ (resp. $V(\t)\setminus\{a\}\subseteq V_a^{-}$). A similar situation holds when $V(\t)\cap V(\s)$ is $\{a,v\}, \{b,u\}, \{ b,v\}, \{c,u\}$, or $\{c,v\}$.

      \item If $V(\t)\cap V(\s)=\{a,u,v\}$, then we take $\t'$ as the same as described in $(ii)$ and $(iii)$.

     \item If $V(\t)\cap V(\s)=\{a,b\}$, then the other vertices of $\t$ lie either in $V_a^{+}$ or in $V_a^{-}$. Using the same argument as in Lemma \ref{4-dim vertex folding}, the vertices of $\t$ other than $a$ and $b$ belong to $V_b^{+}$ (resp.  $V_b^{-}$) if these vertices belong to $V_a^{+}$ (resp. $V_a^{-}$). Therefore, we consider $\t'=(\t-ab)\star a^{+}b^{+}$ (resp. $\t'=(\t-ab)\star a^{-}b^{-}$) if $V(\t)\setminus\{a,b\}\subseteq V_a^{+}$ (resp. $V(\t)\setminus\{a,b\}\subseteq V_a^{-}$). Similar constructions hold when $V(\t)\cap V(\s)$ is $\{a,c\}$ or $\{b,c\}$. 

    \item If $V(\t)\cap V(\s)=\{a,b,u\}$, then by the same argument as in the previous case, the vertices of $\t$ other than $a,b$ and $u$ lie either in $V_a^{+}\cap V_b^{+}$ or in $V_a^{-}\cap V_b^{-}$. Similarly as in the previous case, we take $\t'$ as $(\t-ab)\star a^{+}b^{+}$ and $(\t-ab)\star a^{-}b^{-}$ when the vertices of  $\t$ other than $a,b$, and $u$ lie in $V_a^{+}\cap V_b^{+}$ and $V_a^{-}\cap V_b^{-}$, respectively. The other cases are similar to this.

    \item If $V(\t)\cap V(\s)=\{a,b,u,v\}$, then we take $\t'$ as the same as described in $(v)$ and $(vi)$.

    \item If $V(\t)\cap V(\s)=\{a,b,c\}$, then the other vertices of $\t$ lie either in $V_a^{+}\cap V_b^{+}\cap V_c^{+}$ or in $V_a^{-}\cap V_b^{-}\cap V_c^{-}$. In the first case, we consider $\t'$ as $(\t-abc)\star a^{+}b^{+}c^{+}$, and in the second case, we take $\t'$ as $(\t-abc)\star a^{-}b^{-}c^{-}$.

   \item If $V(\t)\cap V(\s)=\{a,b,c,u\}$, then the structure of $\t'$ will be the same as described in $(viii)$. The same construction will be taken if $V(\t)\cap V(\s)=\{a,b,c,v\}$.     
 \end{enumerate}

Consider a bijection $\psi: \D'\to \D'$ that sends $(u,v,a^{+}, b^{+}, c^{+})$ to $(u,v,a^{-}, b^{-}, c^{-})$. Then $\psi$ is admissible for edge folding at the edge $uv$, and $\D$ is obtained from $\D'$ by identifying all faces $\rho_1\leq uva^{+}b^{+}c^{+}$ and $\rho_2\leq uva^{-}b^{-}c^{-}$ such that $\psi(\rho_1)=\rho_2$, and then removing the identified facet. Note that the vertices $a,b$, and $c$ are the vertices in $\D$ obtained by identifying $a^{+}$ with $a^{-}$, $b^{+}$ with $b^{-}$, and $c^{+}$ with $c^{-}$, respectively. Hence, $\D=(\D')_{uv}^{\psi}$. It remains to prove that $\D'$ is a normal $4$-pseudomanifold.

We prove that the link of every vertex in $\D'$ is a normal $3$-pseudomanifold. Observe that $\lk (a,\D)= \lk (a^{+},\D')\#_{\psi} \lk (a^{-},\D')$. Since $\D$ is a normal $4$-pseudomanifold, $\lk (a,\D)$ is a normal 3-pseudomanifold, and hence both $\lk (a^{+},\D')$ and $\lk (a^{-},\D')$ must be normal $3$-pseudomanifolds. A similar description applies for the links of $b^{+}, b^{-}, c^{+}$, and $c^{-}$. 

Let $y$ be a vertex in $\D'$ other than $a^+, a^-,b^+,b^-,c^+$, and $c^-$. Then the link of $y$ in $\D'$ is the same as the link of $y$ in $\D$, except for each $x\in\{a,b,c\}$ if $y\in S_x^+$ (resp. $y\in S_x^-$) in $\D$, then $x$ is replaced by $x^+$ (resp. $x^-$) in the link of $y$ in $\D'$. Hence, the link of $y$ in $\D'$ is a normal 3-pseudomanifold. It remains to prove that $\lk (u,\D')$ and $\lk (v,\D')$ are normal 3-pseudomanifolds.

Since, for each $x\in V(uvabc)\setminus \{u,v\}$, $\p(\s-x)$ separates $\lk (x,\D)$ into two parts, it follows from Proposition \ref{4 dim 2-vertex two sided} that the $2$-dimensional sphere $\p(abcv)$ is two-sided in $\lk (u,\D)$. Furthermore, since  $\lk (uv,\D)$ is not separated into two parts by $\partial(abc)$, we arrive at the following scenario: $\tau:=vabc$ is a missing 3-simplex in $\lk (u,\D)$,  $\lk (v,\lk (u,\D))$ is not separated into two parts by $\partial(abc)$, and for each $x\in \{a,b,c\}$,  $\lk (x, \lk (u,\D))$  is separated into two parts by $\partial(\tau-x)$.  Consequently, by Proposition \ref{3-dim vertex folding}, $\lk (u,\D)$ is obtained from $\lk (u,\D') u$  through a vertex folding at $v$. As a result,  $\lk (u,\D')$ is a normal 3-pseudomanifold. By a similar argument,  $\lk (v,\D')$ is also a normal 3-pseudomanifold. This completes the proof.
\end{proof}

\section{$g_2$- and $g_3$-optimal normal pseudomanifolds}
\begin{Proposition}{\rm \cite{BasakSwartz}}\label{vertex outside star}
Let $\D$ be a normal $d$-pseudomanifold such that $g_2(\D)=g_2(\lk (t,\D))$. If a vertex $v$ is not in $\st (t,\D)$, then $\lk (v,\D)$ is a stacked sphere.
\end{Proposition}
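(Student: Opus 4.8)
The plan is to run a rigidity (equilibrium-stress) argument in which the optimality hypothesis is used to force the stress space of $\D$ to be as small as possible. Throughout I would fix a generic embedding $p$ of the graph of $\D$ into $\R^{d+1}$; by the results of Fogelsanger and Kalai \cite{Fogelsanger,Kalai}, the graph of a normal $d$-pseudomanifold ($d\ge 3$) is generically $(d+1)$-rigid, and the dimension of its space of self-stresses $S(\D)$ equals $g_2(\D)$. Likewise, for any vertex $u$ the link $\lk (u,\D)$ is a normal $(d-1)$-pseudomanifold, so its self-stress space $S(\lk (u,\D))$ in a generic $\R^{d}$ has dimension $g_2(\lk (u,\D))$. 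Since $\lk (v,\D)$ is itself a normal $(d-1)$-pseudomanifold and a normal pseudomanifold with $g_2=0$ is a stacked sphere \cite{Kalai,Tay}, it suffices to prove that $g_2(\lk (v,\D))=0$.

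First I would recall the localisation map underlying Proposition \ref{g2 of link is bounded by g2 of complex}: coning a self-stress of $\lk (u,\D)$ by the apex $u$ and extending it by zero produces a self-stress of $\D$, giving an injection $\iota_u\colon S(\lk (u,\D))\hookrightarrow S(\D)$ whose image is exactly the set of self-stresses of $\D$ supported on the edge set of $\st (u,\D)$. Applying this with $u=t$ and using the optimality hypothesis $g_2(\D)=g_2(\lk (t,\D))$, the injection $\iota_t$ is forced to be an isomorphism. Hence every self-stress of $\D$ is supported on the edges of $\st (t,\D)$.

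Now I would bring in $v\notin \st (t,\D)$. No edge incident to $v$ lies in $\st (t,\D)$, so by the previous paragraph every self-stress of $\D$ vanishes on every edge through $v$. Feeding this into the description of $\mathrm{im}(\iota_v)$ — a priori the stresses supported on $\st (v,\D)$ — it must in fact consist of stresses supported on the edges of $\lk (v,\D)$ alone, and, applying the isomorphism for $t$ once more, on the edges common to the two links, $E_\cap:=E(\lk (t,\D))\cap E(\lk (v,\D))$ (the edges spanned by the common neighbours of $t$ and $v$, since a putative edge $tx$ or $vx$ would force $t$ and $v$ to be adjacent). This identification is an equality, so
\[ g_2(\lk (v,\D))=\dim S(\lk (v,\D))=\dim\{\omega\in S(\D):\ \omega\text{ is supported on }E_\cap\}. \]

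The main obstacle is the final step: showing that this number is zero, i.e. that $E_\cap$ is independent in the generic $(d+1)$-dimensional rigidity matroid of $\D$, so it carries no nonzero self-stress. This is precisely where the equality hypothesis must be used essentially rather than as a mere inequality: the unconditional bound $g_2(\D)\ge g_2(\lk (t,\D))+g_2(\lk (v,\D))$ is \emph{false} — a suspension $\{t,v\}\star L$ over a $(d-1)$-pseudomanifold $L$ with large $g_2$ has $g_2(\D)=g_2(L)+f_0(L)-d-1$ against $g_2(\lk (t,\D))+g_2(\lk (v,\D))=2g_2(L)$ — so one cannot simply add the two local contributions. To establish independence of $E_\cap$ I would note that any $\omega$ supported on $E_\cap$ vanishes on both families of cone edges $\{tx\}$ and $\{vx\}$, hence is simultaneously a self-stress of the generically embedded spheres $\lk (t,\D)$ and $\lk (v,\D)$ in $\R^{d+1}$ that is carried by their common edges; since $t$ and $v$ are non-adjacent, $p(t)$ and $p(v)$ are in general position relative to the common-neighbour vertices, and I expect the two coning relations to be incompatible for a nonzero $\omega$ confined to the small set $E_\cap$. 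Converting this general-position incompatibility into a rigorous vanishing statement — as opposed to the bookkeeping of the earlier steps, which is routine — is the crux of the argument.
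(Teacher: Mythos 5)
Your reduction up to the final step is correct and is essentially the standard rigidity argument behind this result (which the paper itself does not prove, quoting it from \cite{BasakSwartz}): generic $(d+1)$-rigidity of $G(\D)$ gives $\dim S(\D)=g_2(\D)$; the cone lemma identifies the stresses supported on $E(\st (t,\D))$ with $S(\lk (t,\D))$; the equality $g_2(\D)=g_2(\lk (t,\D))$ then forces every stress of $\D$ to be supported on $E(\st (t,\D))$, hence to vanish on all edges through $v$; and your identification $g_2(\lk (v,\D))=\dim\{\omega\in S(\D):\mathrm{supp}(\omega)\subseteq E_\cap\}$ with $E_\cap=E(\lk (t,\D))\cap E(\lk (v,\D))$ is sound (your suspension computation $g_2(\{t,v\}\star L)=g_2(L)+f_0(L)-d-1$ is also correct and rightly shows that naive additivity of the two local contributions is unavailable).

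The gap is the step you yourself flag as the crux, and the mechanism you propose for closing it cannot work. A stress $\omega$ supported on $E_\cap$ has equilibrium conditions only at the vertices covered by $E_\cap$; its coefficients on every edge at $t$ and at $v$ are zero, so the points $p(t)$ and $p(v)$ appear in none of its equations. There is therefore no ``incompatibility of the two coning relations'' to be extracted from the general position of $p(t)$ and $p(v)$: those two points can be moved arbitrarily without affecting whether $\omega$ is a stress. What you actually need is that the subgraph carried by $E_\cap$ is independent in the generic $(d+1)$-rigidity matroid, and genericity alone provably cannot deliver this, because cones do in general carry nonzero stresses supported entirely on their base: already $K_5$ inside a generic $K_6$ in $\R^3$ gives such a stress, and in the dimension relevant to this paper ($d=4$) the graph of a $2$-neighborly triangulated $3$-sphere has $f_1=\binom{f_0}{2}>5f_0-15$ once $f_0\geq 8$, so it is dependent in the generic $5$-rigidity matroid. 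Consequently the independence of $E_\cap$ must be derived by using the global hypothesis $g_2(\D)=g_2(\lk (t,\D))$ (through its structural consequences for the edges common to the two links) a second time, rather than from general position; this is exactly the part of the argument that the source proof in \cite{BasakSwartz} supplies and that your proposal leaves open. As it stands, the proposal reduces the proposition to an unproved — and, by the above examples, genuinely nontrivial — independence statement, so it is incomplete at the decisive point.
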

\begin{Lemma}\label{G(D)=G(st t)}
Let $\D$ be a normal $d$-pseudomanifold, and let $t$ be a singular vertex in $\D$ such that $g_2(\D)=g_2(\lk (t,\D))$.  If $\D$ contains at most two singular vertices, then there exists a normal $d$-pseudomanifold $\D'$ that contains the vertex $t$ such that, $Skel_{1}(\D')=Skel_{1}(\st (t,\D'))$ and $\D$ is obtained from $\D'$ by (possibly zero) facet subdivisions.   
\end{Lemma}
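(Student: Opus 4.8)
The plan is to read the obstruction to $Skel_1(\D)=Skel_1(\st(t,\D))$ straight off of $g_2$ and then remove it by inverse facet subdivisions, inducting on the number of vertices lying outside $\st(t,\D)$. Write $L=\lk(t,\D)$, set $m=f_0(L)=\deg(t)$ and $n=f_0(\D)$, and let $W=V(\D)\setminus V(\st(t,\D))$ be the set of vertices not adjacent to $t$, so $|W|=n-m-1$. Partition the edges of $\D$ into the $m$ edges through $t$, the $f_1(L)$ edges lying in $L$, the \emph{bad} edges (those $uv$ with $u,v\in V(L)$ but $tuv\notin\D$), counted by $B_e$, and the edges meeting $W$, counted by $E_W$. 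Substituting $f_1(\D)=m+f_1(L)+B_e+E_W$ into $g_2(\D)=f_1(\D)-(d+1)f_0(\D)+\binom{d+2}{2}$ and comparing with $g_2(L)=f_1(L)-d\,f_0(L)+\binom{d+1}{2}$ yields, after using $\binom{d+2}{2}-\binom{d+1}{2}=d+1$, the identity
\[
g_2(\D)-g_2(L)=B_e+E_W-(d+1)|W|.
\]
Hence $g_2$-minimality at $t$ is precisely the relation $B_e+E_W=(d+1)|W|$. In particular, if $W=\emptyset$ then $B_e=0$: every vertex is a neighbour of $t$ and every edge off $t$ lies in $L$, so $Skel_1(\D)=Skel_1(\st(t,\D))$ and one takes $\D'=\D$ with zero subdivisions.

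When $W\neq\emptyset$ I would peel off one vertex at a time. By Proposition \ref{vertex outside star}, each $w\in W$ has a stacked sphere link $\lk(w,\D)$. The key claim is that some $w\in W$ is an \emph{ear}, i.e.\ $\lk(w,\D)=\p\s$ for a missing $d$-simplex $\s$ (equivalently $\deg(w)=d+1$). Granting this, note that $w\notin\st(t,\D)$ forces $t\notin V(\s)$; deleting $w$ and filling in $\s$ yields a normal $d$-pseudomanifold $\D_1$ of which $\D$ is the facet subdivision at $\s$. Since facet subdivision leaves $g_2$ (and $g_3$) unchanged, as a direct computation shows, and does not disturb $\lk(t,\D)$ because $t\notin\s$, the complex $\D_1$ is again $g_2$-minimal at $t$; as $w$ is non-singular, $\D_1$ retains $t$ and has at most two singular vertices. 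Deleting $w$ alters no adjacency among the surviving vertices, so $W(\D_1)=W\setminus\{w\}$ and the induction parameter drops by one. Iterating $|W|$ times produces a complex $\D'$ with $W(\D')=\emptyset$, which by the first paragraph satisfies $Skel_1(\D')=Skel_1(\st(t,\D'))$; reassembling the recorded subdivisions recovers $\D$ from $\D'$.

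The crux, and the step I expect to fight with, is the ear claim. I would phrase it through the \emph{outer region} $B$, the subcomplex of $\D$ generated by the facets avoiding $t$: this is a $d$-pseudomanifold with boundary $L$ whose interior vertices are precisely $W$, and for an interior vertex $\lk_B(w)=\lk(w,\D)$ is stacked. One must produce an interior vertex of $B$ whose link is $\p\s$. There are two natural routes. The counting route: if no $w\in W$ were an ear then $\deg(w)\ge d+2$ for every $w\in W$; writing $I$ for the number of edges internal to $W$ and using $E_W=\sum_{w\in W}\deg(w)-I$, the minimality relation gives $\sum_{w\in W}\deg(w)=(d+1)|W|+I-B_e$, forcing $I\ge|W|+B_e\ge|W|$, and I would contradict this with the sparsity of stacked links (each satisfies $f_1=d\,f_0-\binom{d+1}{2}$, so the interior cannot support so many mutually incident edges). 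The structural route: show directly that the interior-stacked-link hypothesis forces $B$ to be obtained from a triangulated $d$-ball without interior vertices by a sequence of facet subdivisions, whence its most recently created interior vertex is an ear, in the spirit of the ear decomposition of stacked spheres used by Walkup. Proving that $B$ carries this stacked structure is the technical heart; the at-most-two-singularities hypothesis enters to keep the singular set (necessarily contained in $\{t\}\cup V(L)$) and the topology of $B$ controlled during the peeling, and it also guarantees, via the terminal case $W=\emptyset$, that $B_e=0$, so that no bad edges can survive.
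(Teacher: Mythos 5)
Your global plan coincides with the paper's (induct on the vertices outside $\st (t,\D)$, peel them off by inverse facet subdivisions, observing that $g_2$, $\lk (t,\D)$, and the singularity count are preserved), and your counting identity $g_2(\D)-g_2(L)=B_e+E_W-(d+1)|W|$ is a clean, elementary substitute for the paper's generic-rigidity argument in the terminal case $W=\emptyset$. But there is a genuine gap exactly where you predict: the ear claim. Neither of your routes closes it. The counting route gives, from ``no ear,'' only $I\geq |W|+B_e$, i.e.\ the graph induced on $W$ contains at least as many edges as vertices, hence a cycle --- and that is simply not in tension with stacked links: subdivide a facet by $w_1$, then a facet containing $w_1$ by $w_2$, then a facet containing $w_1w_2$ by $w_3$, and you have three mutually adjacent interior vertices with stacked links, so $I\geq|W|$ configurations are abundant and the per-link sparsity bound $f_1=d\,f_0-\binom{d+1}{2}$ yields no contradiction (these examples do contain ears, but your argument must rule out the no-ear case from $I\geq|W|$ alone, and it cannot). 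The structural route --- show the antistar $B$ is obtained from a ball without interior vertices by facet subdivisions --- is precisely the conclusion of the lemma localized to $B$, so as stated it begs the question.

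The paper avoids ever needing an ear. Given $u\notin \st (t,\D)$ with $\lk (u,\D)$ stacked but not the boundary of a $d$-simplex, it extracts a missing $(d-1)$-simplex $\t$ of $\lk (u,\D)$ and shows $\t\in\D$: otherwise one retriangulates $\st (u,\D)$ (remove $u$, insert $\t$, cone off the two resulting spheres), dropping $g_2$ by $1$ while retaining a vertex whose link is isomorphic to $\lk (t,\D)$, contradicting Proposition \ref{g2 of link is bounded by g2 of complex}. Thus $u\star\t$ is a missing facet, and here the at-most-two-singularities hypothesis does real work: it guarantees at most one vertex of $u\star\t$ is singular, so that Proposition \ref{d-dim connected cum} and Lemma \ref{4-dim vertex folding} apply; $g_2$-minimality at $t$ then excludes vertex folding (via Equation \ref{folding g2}) and handle addition, forcing $\D=\D_4\#\D_5$ with $t\in\D_4$, $g_2(\D_5)=0$, hence $\D_5$ a stacked sphere --- i.e.\ a whole batch of facet subdivisions split off at once, after which induction on the vertex count finishes. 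Note that your proposal never genuinely uses the two-singularity hypothesis (the uses you describe are not load-bearing), which is itself a warning sign: the ear claim is in fact true, but the only available proof of it runs through this missing-facet/connected-sum decomposition, so it cannot serve as an independent shortcut. To repair your write-up, replace the ear claim by the paper's missing-facet step, or derive ear existence as a corollary of that decomposition rather than from the counting identity.
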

\begin{proof}
 We prove the result by induction on the number of vertices that are not in $\st (t,\D)$. For the base case of zero, if $\D$ contains an edge, say $e$, that is not in $\st (t,\D)$, then $G(\st (t,\D)) \cup \{e\}$ is generically $(d+1)$-rigid. Since $g_2(\D[V(\st (t,\D))])=g_2(\D)+1$, we obtain a contradiction. Therefore, $G(\D)=G(\st (t,\D))$.

 For the induction step, assume that $u$ is a vertex that is not in $\st (t,\D)$. By Lemma \ref{vertex outside star}, $\lk (u,\D)$ is a stacked sphere. If $\lk (u,\D)$ is the boundary of a $d$-simplex, say $\s$, then $\s\notin\D$, and we replace $\st (u,\D)$ with the simplex $\s$. Since the resulting complex has fewer vertices than $\D$, we are done by the induction hypothesis.
 
 Now, assume that $\lk (u,\D)$ is a stacked sphere that is not the boundary of a $d$-simplex. Then $\lk (u,\D)$ contains a missing $d-1$-simplex, say $\t$. We first observe that $\t$ is a face of $\D$; otherwise, we can retriangulate $\st (u,\D)$ by removing $u$ and its incident faces, inserting $\t$, and then conning off the spheres obtained by $\lk (u,\D)$ and $\t$. If $\D_1$ is the resulting complex in such instances, then $g_2(\D_1)=g_2(\D)-1$, and $\D_1$ would still contain a vertex whose link is isomorphic to the link of $t$, which is a contradiction. Therefore, $\t$ is in $\D$ and we get $u \star\t$ as a missing facet in $\D$. 

 Since $V(\D)\setminus\{t\}$ contains at most one singular vertex and $u\notin \st (t,\D)$, at most one vertex of $\t$ can be singular. Further, by Proposition \ref{d-dim connected cum} and Lemma \ref{4-dim vertex folding}, $\D$ was obtained by one of the following ways: either by a connected sum of two normal pseudomanifolds or by a vertex folding from a normal 4-pseudomanifold. If $\D$ was obtained by a vertex folding from a normal pseudomanifold $\D_2$, then $g_2(\D_2)=g_2(\D)-\binom{d+1}{2}$ and $\D_2$ contains the vertex $t$ with $g_2(\st (t,\D_2))=g_2(\st (t,\D))=g_2(\D)$, which is a contradiction. Therefore, $\D$ was obtained by a connected sum of two normal pseudomanifolds $\D_4$ and $\D_5$. Further, exactly one of $\D_4$ and $\D_5$ contains the vertex $t$. Let $t\in\D_4$. Then $\lk (t,\D_4)$ and $\lk (t,\D)$ are isomorphic, and hence $g_2(\D_4)=g_2(\D)$. Therefore, $g_2(\D_5)=0$ and we conclude that $\D_4$ is a stacked sphere. This completes the proof.
 \end{proof}

\begin{Lemma}\label{appearence of g_3}
Let $K$ be a $g_2$-minimal normal $d$-pseudomanifold with respect to a singular vertex $t$. If $K$ contains at most two singular vertices, then $g_3(K)\geq g_3(\st (t,K))$.
\end{Lemma}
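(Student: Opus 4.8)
The plan is to combine the structural description in Lemma~\ref{G(D)=G(st t)} with a short $f$-vector computation. Since $K$ is $g_2$-minimal at $t$ and has at most two singular vertices, Lemma~\ref{G(D)=G(st t)} supplies a normal $d$-pseudomanifold $\D'$ containing $t$ with $Skel_{1}(\D')=Skel_{1}(\st(t,\D'))$, from which $K$ is recovered by a (possibly empty) sequence of facet subdivisions. A facet subdivision is exactly a connected sum with the boundary complex $\partial\Delta^{d+1}$ of a $(d+1)$-simplex: deleting one facet of $\partial\Delta^{d+1}$ leaves the star of the opposite vertex, and gluing this star across the subdivided facet cones that facet off from a new vertex. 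Since the $h$-vector of $\partial\Delta^{d+1}$ is all ones, $g_2(\partial\Delta^{d+1})=g_3(\partial\Delta^{d+1})=0$, so by \eqref{g_2:connected sum} every such move preserves both $g_2$ and $g_3$; in particular $g_3(K)=g_3(\D')$.

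Next I would check that these subdivisions are invisible to $\st(t)$. By the construction underlying Lemma~\ref{G(D)=G(st t)}, the vertices reintroduced by the subdivisions are precisely the vertices of $K$ lying outside $\st(t,K)$; each such vertex becomes adjacent to every vertex of the facet it subdivides, so that facet cannot contain $t$. Hence no face containing $t$ is altered, giving $\st(t,K)=\st(t,\D')$ as simplicial complexes, and therefore $g_3(\st(t,K))=g_3(\st(t,\D'))$. It thus suffices to prove $g_3(\D')\ge g_3(\st(t,\D'))$.

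At this point the equality $Skel_{1}(\D')=Skel_{1}(\st(t,\D'))$ does all the work. Because $\D'$ and its subcomplex $\st(t,\D')$ share the same vertex set and edge set, $f_0(\D')=f_0(\st(t,\D'))$ and $f_1(\D')=f_1(\st(t,\D'))$. Subtracting the two copies of $g_3(\cdot)=f_2-d f_1+\binom{d+1}{2}f_0-\binom{d+2}{3}$, the $f_0$- and $f_1$-contributions cancel, leaving
\[
g_3(\D')-g_3(\st(t,\D'))=f_2(\D')-f_2(\st(t,\D')).
\]
Since $\st(t,\D')$ is a subcomplex of $\D'$, every triangle of the star is a triangle of $\D'$, so the right-hand side is nonnegative. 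Combining this with $g_3(K)=g_3(\D')$ and $g_3(\st(t,K))=g_3(\st(t,\D'))$ gives $g_3(K)\ge g_3(\st(t,K))$.

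The substantive work is already absorbed into Lemma~\ref{G(D)=G(st t)}; the only real obstacle in the present argument is verifying that the reconstructing subdivisions leave $\st(t)$ untouched, which is what lets the $g_3$-comparison be transported from $K$ to the model $\D'$ whose $1$-skeleton coincides with that of $\st(t,\D')$. Once that transfer is legitimate, the inequality is forced by the elementary monotonicity $f_2(\st(t,\D'))\le f_2(\D')$ of triangle counts under passing to a subcomplex.
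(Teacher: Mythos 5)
Your proposal is correct and takes essentially the same route as the paper: both invoke Lemma~\ref{G(D)=G(st t)} to pass to a model $\D$ with $Skel_{1}(\D)=Skel_{1}(\st(t,\D))$, reduce the comparison to $g_3(\D)-g_3(\st(t,\D))=f_2(\D)-f_2(\st(t,\D))\geq 0$ via the cancellation of the $f_0$- and $f_1$-terms, and transfer the inequality back to $K$. Your additional verifications---that each facet subdivision is a connected sum with $\partial\Delta^{d+1}$ and hence preserves $g_3$, and that the subdivided facets avoid $t$ so $\st(t,K)=\st(t,\D)$---merely make explicit the equalities $g_3(K)=g_3(\D)$ and $g_3(\st(t,K))=g_3(\st(t,\D))$ that the paper asserts without comment.
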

\begin{proof}
Since $K$ is a $g_2$-minimal normal $d$-pseudomanifold with at most two singularities, Lemma \ref{G(D)=G(st t)} implies the existence of a normal $d$-pseudomanifold $\D$ that contains the vertex $t$ and satisfies $Skel_{1}(\D)=Skel_{1}(\st (t,\D))$. Moreover, $K$ is obtained from $\D$ by (possibly zero) facet subdivisions. Consequently, we have $f_0(\D)=f_0(\st (t,\D))$ and $f_1(\D)=f_1(\st (t,\D))$. Now, 
  \begin{eqnarray*}
g_3(\D)&=& f_2(\D)-df_1(\D)+\binom{d+1}{2}f_0(\D)-\binom{d+2}{3}\\
&=& f_2(\D)-d f_1(\st (t,\D))+\binom{d+1}{2}f_0(\st (t,\D))-\binom{d+2}{3}\\
&=& f_2(\D)-f_2(\st (t,\D))+g_3(\st (t,\D))\\
&\geq& g_3(\st (t,\D)).
\end{eqnarray*}
Since $g_3(K)=g_3(\D)$ and $g_3(\st (t,K))=g_3(\st (t,\D))$, it follows from the above relation that $g_3(K)\geq g_3(\st (t,K))$. This completes the proof.
\end{proof}

Let \( K \) be a normal \( d \)-pseudomanifold with at most two singular vertices. It is known that for any vertex \( v \), the inequality \( g_2(K) \geq g_2(\lk (v,K)) \) holds. Suppose \( K \) is \( g_2 \)-minimal with respect to a singular vertex \( t \), meaning that \( g_2(K) = g_2(\lk (t,K)) \). Then, it follows that \( g_2(\lk (t,K)) \geq g_2(\lk (v,K)) \) for all vertices \( v \). Furthermore, by Lemma \ref{appearence of g_3}, we obtain the bound \( g_3(K) \geq g_3(\st (t,K)) \). We say that \( K \) is {\em \( g_2 \)- and \( g_3 \)-optimal} if it satisfies \( g_2(K) = g_2(\lk (t,K)) \) and \( g_3(K) = g_3(\lk (t,K)) \). 
Notably, the condition \( g_2(K) = g_2(\lk (t,K)) \) ensures that \( g_2(\lk (t,K)) \geq g_2(\lk (v,K)) \) for all vertices \( v \). However, the equality \( g_3(K) = g_3(\lk (t,K)) \) does not necessarily imply a similar relation, as it may happen that \( g_3(K) = g_3(\lk (t,K)) \) is negative while \( g_3(\lk (v,K)) \) is non-negative. For instance, if \( \lk (v,K) \) is a stacked sphere, then \( g_3(\lk (v,K)) = 0 \).

Before we move further, observe that if a $d$-manifold $K$ is $g_2$-minimal \wrt a vertex $t$, then by the same arguments as in the proof of Lemma \ref{G(D)=G(st t)}, we obtain a $d$-manifold $\D$ such that $Skel_{1}(\D)=Skel_{1}(\st (t,\D))$, and $K$ is obtained from $\D$ by (possibly zero) facet subdivisions. Moreover, $g_3(\D)\geq g_3(\st (t,\D))$ holds. Therefore, using the same notion as in the normal $d$-pseudomanifolds, the $g_2$- and $g_3$-optimality for manifolds can also be defined. We say that a $d$-manifold $K$ is $g_2$- and $g_3$-optimal if it satisfies \( g_2(K) = g_2(\lk (t,K)) \) and \( g_3(K) = g_3(\lk (t,K)) \).

Our main result is focused on normal pseudomanifolds that are both $g_2$- and $g_3$-optimal with respect to a singular vertex. Interestingly, this hypothesis remains invariant under certain well-known combinatorial operations. These operations are stated in Proposition \ref{Proposition invariant}, and the proofs follow directly from the definitions of the corresponding combinatorial operations. 

 \begin{Proposition}\label{Proposition invariant} Let $d\geq 3$. Then we have the following:
   \begin{enumerate}[$(i)$]
       \item The boundary complex of a $d$-simplex is $g_2$- and $g_3$-optimal with respect to every vertex.

       \item If $\D$ is a normal $d$-pseudomanifold and $\D'$ is obtained from $\D$ by a finite sequence of facet subdivisions, then $\D$ is $g_2$- and $g_3$-optimal if and only if $\D'$ is $g_2$- and $g_3$-optimal. In particular, stacked $d$-spheres are $g_2$- and $g_3$-optimal

       \item Let $\D=\D_1\#\D_2$, where $\D_1$ and $\D_2$ are normal $d$-pseudomanifolds that are $g_2$- and $g_3$-optimal. Suppose $t$ is a vertex in $\D$ that is not an identified vertex during the connected sum of $\D_1$ and $\D_2$. Then $t$ belongs to exactly one of $\D_1$ or $\D_2$. If $t\in\D_1$, then $\D$ is $g_2$- and $g_3$-optimal with respect to $t$ if and only if  $\D_1$ is $g_2$- and $g_3$-optimal with respect to $t$, and $\D_2$ is a stacked sphere. On the other hand, if $t$ is an identified vertex during the connected sum of $\D_1$ and $\D_2$, representing $t_1$ of $\D_1$ and $\psi(t_1)$ of $\D_2$, then $\D$ is $g_2$- and $g_3$-optimal with respect to $t$ if and only if  both $\D_1$ and $\D_2$ are $g_2$- and $g_3$-optimal with respect to $t_1$ and $\psi(t_1)$, respectively.

       \item Let $\D$ be a normal $d$-pseudomanifold, and let $\D^{\psi}$ be obtained by attaching a $d$-handle to $\D$. Then $\D^{\psi}$ cannot be $g_2$- and $g_3$-optimal with respect to any vertex in it. 

       \item Let $\D$ be a normal $d$-pseudomanifold, and let $\D^{\psi}_t$ be obtained from $\D$ by a vertex folding at the vertex $x$. Then $\D$ is $g_2$- and $g_3$-optimal with respect to $t$ if and only if  $\D^{\psi}_t$ is $g_2$- and $g_3$-optimal with respect to $t$. 

        \item Let $\D$ be a normal $d$-pseudomanifold, and let $\D^{\psi}_{uv}$ be obtained from $\D$ by an edge folding along the edge $uv$. Then $\D$ is $g_2$- and $g_3$-optimal with respect to $u$ if and only if  $\D^{\psi}_{uv}$ is $g_2$- and $g_3$-optimal with respect to $u$. 
   \end{enumerate}  
 \end{Proposition}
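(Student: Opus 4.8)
The plan is to reduce every claim to the four numerical identities \eqref{g_i: handles}, \eqref{g_2:connected sum}, \eqref{folding g2} and \eqref{edge folding g2} recording how $g_2$ and $g_3$ move under the relevant operation, together with two auxiliary facts. The first is the cone identity $g_i(\st (t,\D))=g_i(\lk (t,\D))$ for $i=2,3$: since $\st (t,\D)=t\star \lk (t,\D)$ is a cone, its $h$-vector is that of $\lk (t,\D)$ with a trailing zero appended, so $g_i$ is unchanged for $i\le d$. This lets me use $\st$ and $\lk$ interchangeably and, in particular, read the optimality condition simply as $g_i(\D)=g_i(\lk (t,\D))$ for $i=2,3$. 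The second is the non-negativity $g_2\ge 0$ together with Kalai's characterization that $g_2=0$ forces a stacked sphere \cite{Kalai,Tay}, and the monotonicity of Proposition \ref{g2 of link is bounded by g2 of complex}.

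For $(i)$, the boundary of a simplex is the minimal sphere triangulation, so $g_2=g_3=0$, and each vertex link is again a boundary of a simplex with $g_2=g_3=0$; hence both defining equalities hold at every vertex. For $(ii)$, a direct $f$-vector count shows a single facet subdivision leaves $g_2$ and $g_3$ unchanged: starring a facet contributes $\Delta f_0=1,\ \Delta f_1=d+1,\ \Delta f_2=\binom{d+1}{2}$, and substituting these into the formulas for $g_2,g_3$ gives $\Delta g_2=\Delta g_3=0$. The same computation in dimension $d-1$ applies to $\lk (t,\D)$ when $t$ lies in the subdivided facet (and $\lk (t,\D)$ is untouched otherwise), while the new vertex is non-singular; hence all four quantities entering the optimality condition are preserved, giving the equivalence. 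Since stacked spheres arise from $\partial\Delta$ by facet subdivisions, $(i)$ and $(ii)$ yield the final assertion of $(ii)$. Part $(iv)$ is immediate: optimality at any vertex forces $g_2$-minimality there, which Proposition \ref{handle addition does not have g2-minimal} rules out for a handle addition.

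The folding parts $(v)$ and $(vi)$ share one clean mechanism: folding at the optimality vertex induces a lower-dimensional operation on its link whose $g_i$-shift exactly matches that of the whole complex. If $\D=(\D')^{\psi}_{t}$ is a vertex folding at $t$, then \eqref{folding g2} gives $g_i(\D)=g_i(\D')+(-1)^i\binom{d+1}{i}$, while the analysis underlying Lemma \ref{4-dim vertex folding} shows $\lk (t,\D)$ is obtained from $\lk (t,\D')$ by a handle addition in dimension $d-1$; by \eqref{g_i: handles} this changes $g_i$ by $(-1)^i\binom{(d-1)+2}{i}=(-1)^i\binom{d+1}{i}$, the same amount. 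Hence $g_i(\D)-g_i(\lk (t,\D))=g_i(\D')-g_i(\lk (t,\D'))$ for $i=2,3$, so $\D$ is optimal at $t$ iff $\D'$ is. For $(vi)$, if $\D=(\D')^{\psi}_{uv}$ is an edge folding, \eqref{edge folding g2} gives a shift of $(-1)^i\binom{d}{i}$, while Lemma \ref{4-dim edge folding} shows $\lk (u,\D)$ is a vertex folding of $\lk (u,\D')$ in dimension $d-1$, whose shift is $(-1)^i\binom{(d-1)+1}{i}=(-1)^i\binom{d}{i}$ by \eqref{folding g2}; the difference $g_i(\D)-g_i(\lk (u,\D))$ is again invariant, proving the equivalence at $u$.

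The connected sum part $(iii)$ is the main obstacle. By \eqref{g_2:connected sum}, $g_i(\D)=g_i(\D_1)+g_i(\D_2)$ for $i=2,3$, and the links connect-sum as well, giving $g_i(\lk (t,\D))=g_i(\lk (t_1,\D_1))+g_i(\lk (t_2,\D_2))$; the only delicate point is that $g_3$-additivity for the links needs them to be at least $3$-dimensional, which holds in the dimension of interest. If $t$ is unidentified, say $t\in\D_1$, then $\lk (t,\D)=\lk (t,\D_1)$ and optimality reads $g_i(\D_1)+g_i(\D_2)=g_i(\lk (t,\D_1))$. Taking $i=2$ and using $g_2(\lk (t,\D_1))\le g_2(\D_1)$ from Proposition \ref{g2 of link is bounded by g2 of complex} forces $g_2(\D_2)\le 0$, hence $g_2(\D_2)=0$ and $\D_2$ is a stacked sphere \cite{Kalai,Tay}; then $g_3(\D_2)=0$ and the $i=3$ equation collapses to $g_3(\D_1)=g_3(\lk (t,\D_1))$, so $\D_1$ is optimal, while the converse is the direct substitution $g_2(\D_2)=g_3(\D_2)=0$. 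If instead $t$ is identified from $t_1\in\D_1$ and $t_2\in\D_2$, optimality becomes $\sum_{j}\big(g_i(\D_j)-g_i(\lk (t_j,\D_j))\big)=0$ for $i=2,3$. For $i=2$ each summand is non-negative by Proposition \ref{g2 of link is bounded by g2 of complex}, so both vanish and each $\D_j$ is $g_2$-minimal at $t_j$; with this in hand Lemma \ref{appearence of g_3} (using the standing at-most-two-singularities hypothesis) makes each $i=3$ summand non-negative as well, forcing both to vanish. Thus $\D_1$ and $\D_2$ are each $g_2$- and $g_3$-optimal, and the converse is again direct additivity. The crux throughout $(iii)$ is matching the vanishing of a \emph{sum} of non-negative link defects to the vanishing of each term, which is exactly what Proposition \ref{g2 of link is bounded by g2 of complex} and Lemma \ref{appearence of g_3} supply.
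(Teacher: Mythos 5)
The paper never writes out a proof of this proposition---it simply remarks that ``the proofs follow directly from the definitions of the corresponding combinatorial operations''---so your write-up is, in effect, the intended argument made explicit: reduce everything to the shift formulas \eqref{g_i: handles}--\eqref{edge folding g2}, the cone identity $g_i(\st(t,\D))=g_i(\lk(t,\D))$, Proposition \ref{g2 of link is bounded by g2 of complex}, the $g_2=0$ characterization of stacked spheres, and Lemma \ref{appearence of g_3}. Parts $(i)$, $(ii)$, $(iv)$ and the unidentified-vertex case of $(iii)$ are correct as written, and your key mechanism for $(v)$ and $(vi)$---that folding at the distinguished vertex induces a handle addition (resp.\ vertex folding) on its link whose $g_i$-shift matches that of the ambient complex---is exactly the right observation and is more rigorous than what the paper offers.

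There is, however, one genuine soft spot, and it sits precisely in the clause you wave away with ``which holds in the dimension of interest.'' The proposition is stated for $d\geq 3$, and for $d=3$ the links are $2$-dimensional, where all the $i=3$ shift formulas fail: the removed facet is then itself a $2$-face and enters $f_2$, so a handle addition shifts $g_3$ of a $2$-complex by $-5$ rather than $-\binom{4}{3}=-4$, a vertex folding shifts it by $-2$ rather than $-\binom{3}{3}=-1$, and a facet connected sum of $2$-complexes satisfies $g_3(S_1\# S_2)=g_3(S_1)+g_3(S_2)-1$ rather than additivity. Consequently, at $d=3$ your matching-shift computation in $(v)$ and $(vi)$ actually yields that the defect $g_3(\D)-g_3(\lk(t,\D))$ changes by $+1$ under the folding, and in the identified case of $(iii)$ the defect of $\D$ equals the sum of the two defects plus $1$; so those assertions are false as stated for $d=3$, not merely unproved. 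A concrete witness: $\D=\partial\Delta^4\,\#_\psi\,\partial\Delta^4$ with the facets identified along a vertex $t$ is a stacked $3$-sphere with $g_3(\D)=0$, while $\lk(t,\D)$ is a $2$-sphere on $5$ vertices with $g_3=4-5=-1$, so $\D$ is not $g_3$-optimal at $t$ although both summands are optimal at every vertex by $(i)$. (The $g_2$ halves survive at $d=3$; only the $g_3$ halves break.) This is arguably a defect inherited from the paper's own overly broad statement, and your argument is sound in the regime where the proposition is actually invoked ($d=4$), but a correct proof must either restrict the $g_3$ claims in $(iii)$ (identified case), $(v)$, $(vi)$ to $d\geq 4$ or treat $d=3$ separately---and you should also note explicitly that in $(v)$ and $(vi)$ you are applying \eqref{g_i: handles} and \eqref{folding g2} in dimension $d-1$, where they again require $d-1\geq 3$. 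A secondary caveat of the same kind: your use of Lemma \ref{appearence of g_3} in the identified case of $(iii)$ imports the ``at most two singularities'' and ``$t_j$ singular'' hypotheses, which do not appear in the proposition's statement, so the forward implication there is established only under those side conditions (which, again, hold everywhere the paper uses the result).
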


\begin{Lemma}\label{2-skeletn same}
Let $K$ be a normal $d$-pseudomanifold that is $g_2$- and $g_3$-optimal with respect to a singular vertex $t$. If $K$ contains at most two singular vertices, then there exists a normal $d$-pseudomanifold $\D$ containing the vertex $t$ such that $Skel_{2}(\D)=Skel_{2}(\st (t,\D))$, and $K$ is obtained from $\D$ by (possibly zero) facet subdivisions.   
\end{Lemma}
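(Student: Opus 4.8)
The starting point is Lemma \ref{G(D)=G(st t)}. Since $K$ is $g_2$- and $g_3$-optimal with respect to $t$, in particular $g_2(K)=g_2(\lk(t,K))$, so $K$ is $g_2$-minimal with respect to $t$; as $K$ has at most two singular vertices, Lemma \ref{G(D)=G(st t)} produces a normal $d$-pseudomanifold $\D$ containing $t$ with $Skel_{1}(\D)=Skel_{1}(\st(t,\D))$, from which $K$ is recovered by (possibly zero) facet subdivisions. The plan is to show that \emph{this same} $\D$ already satisfies $Skel_{2}(\D)=Skel_{2}(\st(t,\D))$; that is, the $g_3$-optimality hypothesis upgrades the equality of $1$-skeletons coming from Lemma \ref{G(D)=G(st t)} to an equality of $2$-skeletons.

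The bridge between the definition of optimality (stated via $\lk(t,\cdot)$) and Lemma \ref{appearence of g_3} (stated via $\st(t,\cdot)$) is the fact that coning leaves the $g$-vector unchanged. Writing $\st(t,\D)=t\star\lk(t,\D)$ and using $f_0(\st)=f_0(\lk)+1$, $f_1(\st)=f_1(\lk)+f_0(\lk)$, and $f_2(\st)=f_2(\lk)+f_1(\lk)$, a direct computation with the defining formulas for $g_2$ and $g_3$ (together with Pascal's identity $\binom{d+2}{3}=\binom{d+1}{3}+\binom{d+1}{2}$) gives
\[
g_2(\st(t,\D))=g_2(\lk(t,\D)) \quad\text{and}\quad g_3(\st(t,\D))=g_3(\lk(t,\D)),
\]
and likewise for $K$ in place of $\D$. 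Consequently the optimality condition $g_3(K)=g_3(\lk(t,K))$ is the same as $g_3(K)=g_3(\st(t,K))$.

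Next I would assemble a chain of equalities. A facet subdivision is a connected sum with the boundary of a $(d+1)$-simplex, which has $g_3=0$, so by \eqref{g_2:connected sum} facet subdivisions preserve $g_3$; moreover, as already used in the proof of Lemma \ref{appearence of g_3}, they also preserve $g_3$ of the star, since a facet subdivision of $K$ meeting $t$ induces a facet subdivision on the link of $t$. Hence
\[
g_3(\D)=g_3(K)=g_3(\lk(t,K))=g_3(\st(t,K))=g_3(\st(t,\D)).
\]
On the other hand, the equality $Skel_{1}(\D)=Skel_{1}(\st(t,\D))$ gives $f_0(\D)=f_0(\st(t,\D))$ and $f_1(\D)=f_1(\st(t,\D))$, so the computation in Lemma \ref{appearence of g_3} yields $g_3(\D)=f_2(\D)-f_2(\st(t,\D))+g_3(\st(t,\D))$. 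Comparing with the displayed chain forces $f_2(\D)=f_2(\st(t,\D))$. Since $\st(t,\D)$ is a subcomplex of $\D$, every triangle of $\st(t,\D)$ is a triangle of $\D$; equal triangle counts together with $Skel_{1}(\D)=Skel_{1}(\st(t,\D))$ then give $Skel_{2}(\D)=Skel_{2}(\st(t,\D))$, as desired.

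The only real content — and the step I would treat most carefully — is recognizing that $g_3$-optimality is precisely the \emph{equality case} of the inequality $g_3(K)\geq g_3(\st(t,K))$ from Lemma \ref{appearence of g_3}, so that it pins the surplus $f_2(\D)-f_2(\st(t,\D))$ down to zero. The coning identities are the technical glue allowing one to pass between the $\lk(t,\cdot)$ of the definition and the $\st(t,\cdot)$ of Lemma \ref{appearence of g_3}; verifying them is routine but must be done with the correct dimension conventions, since $\lk(t,\D)$ is $(d-1)$-dimensional while $\st(t,\D)$ is $d$-dimensional.
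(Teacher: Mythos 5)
Your proposal is correct and follows essentially the same route as the paper's proof: apply Lemma \ref{G(D)=G(st t)} to get $\D$ with $Skel_{1}(\D)=Skel_{1}(\st(t,\D))$, use invariance of $g_2,g_3$ under facet subdivisions to transfer optimality to $\D$, deduce $f_2(\D)=f_2(\st(t,\D))$ from $g_3(\D)=g_3(\st(t,\D))$ together with the $f_0$ and $f_1$ equalities, and conclude via $\st(t,\D)\subseteq\D$. The only difference is that you explicitly verify the coning identities $g_i(\st(t,\cdot))=g_i(\lk(t,\cdot))$ and the facet-subdivision invariance, which the paper treats as implicit; both checks are correct.
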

\begin{proof}
  Since $g_2(K)=g_2(\lk (t,K))$, from Lemma \ref{G(D)=G(st t)}, it follows that  there exists a normal $d$-pseudomanifold $\D$ containing the vertex $t$ such that $Skel_{1}(\D)=Skel_{1}(\st (t,\D))$, and $K$ is obtained from $\D$ by (possibly zero) facet subdivisions. Note that if $K$ and $\D$ differ by finitely many facet subdivisions, then $g_i(\D)=g_i(K)$ for all $i$. Therefore, $g_2(\D)=g_2(\st (t,\D))$ and $g_3(\D)=g_3(\st (t,\D))$.  Since $f_0(\D)=f_0(\st (t,\D))$ and $f_1(\D)=f_1(\st (t,\D))$, from $g_3(\D)=g_3(\st (t,\D))$, we have $f_2(\D)=f_2(\st (t,\D))$.  Therefore, from $\st (t,\D)\subseteq \D$, we conclude that $Skel_{2}(\D)=Skel_{2}(\st (t,\D))$.
  \end{proof}

Note that even if we have a $g_2$- and $g_3$-optimal $d$-manifold $K$, using the same proof as in Lemma \ref{2-skeletn same}, we can conclude the following.

\begin{Corollary}\label{Cor 2-skeletn same}
  Let $K$ be a $d$-manifold that is $g_2$- and $g_3$-optimal with respect to a vertex $t$. Then there exists a $d$-manifold $\D$ containing the vertex $t$ such that $Skel_{2}(\D)=Skel_{2}(\st (t,\D))$, and $K$ is obtained from $\D$ by (possibly zero) facet subdivisions. 
\end{Corollary}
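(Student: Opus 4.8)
The plan is to prove Corollary \ref{Cor 2-skeletn same} by observing that the entire chain of reasoning leading to Lemma \ref{2-skeletn same} carries over verbatim to the manifold setting, so the work is in identifying which ingredients need a manifold analogue and checking that each remains valid. The key structural input is the statement made in the paragraph just before Proposition \ref{Proposition invariant}: if a $d$-manifold $K$ is $g_2$-minimal with respect to a vertex $t$, then by the same arguments as in Lemma \ref{G(D)=G(st t)} one obtains a $d$-manifold $\D$ with $Skel_{1}(\D)=Skel_{1}(\st(t,\D))$ such that $K$ is recovered from $\D$ by (possibly zero) facet subdivisions, and moreover $g_3(\D)\geq g_3(\st(t,\D))$. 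This is precisely the manifold version of Lemma \ref{G(D)=G(st t)}, and it is what the $g_2$-optimality hypothesis delivers here.

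Concretely, I would first invoke $g_2(K)=g_2(\lk(t,K))$ to apply the manifold analogue of Lemma \ref{G(D)=G(st t)}, producing a $d$-manifold $\D$ containing $t$ with $Skel_{1}(\D)=Skel_{1}(\st(t,\D))$ and with $K$ obtained from $\D$ by facet subdivisions. Next I would record that facet subdivisions preserve every $g_i$, so that $g_i(\D)=g_i(K)$ for all $i$; since a facet subdivision does not alter the isomorphism type of $\st(t,\cdot)$ away from the subdivided facets in a way affecting the $2$-skeleton count, one also has $g_i(\st(t,\D))=g_i(\st(t,K))$. Combining this with the $g_3$-optimality hypothesis $g_3(K)=g_3(\lk(t,K))$ (equivalently $g_3(K)=g_3(\st(t,K))$) yields $g_2(\D)=g_2(\st(t,\D))$ and $g_3(\D)=g_3(\st(t,\D))$.

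The computation then proceeds exactly as in Lemma \ref{2-skeletn same}: from $Skel_{1}(\D)=Skel_{1}(\st(t,\D))$ we have $f_0(\D)=f_0(\st(t,\D))$ and $f_1(\D)=f_1(\st(t,\D))$, so substituting these into the definition $g_3(\D)=f_2(\D)-df_1(\D)+\binom{d+1}{2}f_0(\D)-\binom{d+2}{3}$ and the analogous expression for $g_3(\st(t,\D))$ and using $g_3(\D)=g_3(\st(t,\D))$ forces $f_2(\D)=f_2(\st(t,\D))$. Since $\st(t,\D)$ is a subcomplex of $\D$ whose $2$-faces are among those of $\D$, and the two complexes share all $0$- and $1$-faces as well, the equality of $2$-face counts upgrades to the equality of $2$-skeleta, $Skel_{2}(\D)=Skel_{2}(\st(t,\D))$.

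The main obstacle I anticipate is not in the arithmetic, which is identical to the original lemma, but in confirming that the manifold analogue of Lemma \ref{G(D)=G(st t)} is genuinely available. That lemma's proof uses Proposition \ref{d-dim connected cum} and Lemma \ref{4-dim vertex folding}, both of which are phrased for normal pseudomanifolds; one must check that the decomposition into connected sums and vertex foldings produced there respects the manifold category (i.e.\ each piece is again a manifold, not merely a normal pseudomanifold), and that the structure theorem Proposition \ref{vertex outside star} on stacked-sphere links applies to $d$-manifolds. The excerpt's preceding remark asserts this explicitly, so for the purposes of this corollary I would cite that remark as establishing the manifold version of Lemma \ref{G(D)=G(st t)}, and the rest of the argument is then a transparent transcription of the proof of Lemma \ref{2-skeletn same}.
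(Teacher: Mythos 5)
Your proposal is correct and follows essentially the same route as the paper, which justifies the corollary by citing the remark preceding Proposition \ref{Proposition invariant} (the manifold analogue of Lemma \ref{G(D)=G(st t)}) and then transcribing the $f$-vector computation of Lemma \ref{2-skeletn same} verbatim. Your explicit flagging of the category-preservation issue is a reasonable extra care, but it matches what the paper itself asserts and uses.
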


%\section{At most two singularities}
\begin{Lemma}\label{4-manifold}
Let $K$ be a $4$-manifold. If $K$ is $g_2$- and $g_3$-optimal with respect to a vertex, then $K$ is a stacked sphere.
\end{Lemma}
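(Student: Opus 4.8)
The plan is to reduce $K$ to a complex with a fully controlled $2$-skeleton, show by a chain-level computation that this complex is a $\mathbb{Z}_2$-homology $4$-sphere (so that $\chi = 2$), and then read off $g_2 = 0$ from the Dehn--Sommerville relations, at which point the Kalai--Tay characterization forces a stacked sphere.

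First I would set up the reduction. Since $K$ is a $4$-manifold, the chosen vertex $t$ is non-singular, and $K$ is $g_2$- and $g_3$-optimal at $t$; Corollary \ref{Cor 2-skeletn same} then supplies a $4$-manifold $\D$ containing $t$ with $Skel_2(\D)=Skel_2(\st(t,\D))$ such that $K$ is obtained from $\D$ by facet subdivisions. Because a facet subdivision of a stacked sphere is again stacked, it suffices to show that $\D$ is a stacked sphere. Write $L:=\lk(t,\D)$, a genuine $3$-sphere, and observe that $f_i(\D)=f_i(\st(t,\D))=f_i(L)+f_{i-1}(L)$ for $i\le 2$; substituting into the formulas for $g_2$ and $g_3$ gives $g_2(\D)=g_2(L)$ and $g_3(\D)=g_3(L)$ with no further hypotheses.

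The heart of the argument is the computation of $\chi(\D)$. Let $B:=\D[V(\D)\setminus\{t\}]$ be the antistar, a compact $4$-manifold with $\partial B=L\cong S^3$. Since $Skel_2(\D)=Skel_2(t\star L)$, every face of $\D$ of dimension at most $2$ that avoids $t$ is already a face of $L$, so $B$ has no interior faces of dimension $\le 2$; equivalently $C_i(B;\mathbb{Z}_2)=C_i(L;\mathbb{Z}_2)$ for $i\le 2$. Comparing the truncated chain complexes yields $H_1(B;\mathbb{Z}_2)=H_1(L;\mathbb{Z}_2)=0$ directly, while Lefschetz duality $H_k(B;\mathbb{Z}_2)\cong H_{4-k}(B,\partial B;\mathbb{Z}_2)$ together with $C_j(B,\partial B;\mathbb{Z}_2)=0$ for $j\le 2$ forces $H_k(B;\mathbb{Z}_2)=0$ for $k\ge 2$. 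Thus $B$ is $\mathbb{Z}_2$-acyclic, and writing $\D=B\cup_L(t\star L)$ as the union of a $\mathbb{Z}_2$-acyclic piece and a cone glued along $S^3$ shows $\D$ is a $\mathbb{Z}_2$-homology $4$-sphere; in particular $\chi(\D)=2$.

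Finally I would assemble the $g$-vector identities. On the $3$-sphere $L$ the Dehn--Sommerville symmetry $h_1=h_3$ gives $g_3(L)=-g_2(L)$, hence $g_3(\D)=-g_2(\D)$. On the closed $4$-manifold $\D$ the Dehn--Sommerville--Klee relations (equivalently $2f_3=5f_4$ and $3f_2=2f_1+5f_4$) give $g_3(\D)=10(\chi(\D)-2)=0$. Combining these, $g_2(\D)=-g_3(\D)=0$, so by the Kalai--Tay classification of $g_2=0$ normal pseudomanifolds, $\D$ is a stacked $4$-sphere, and therefore so is $K$. The step I expect to be the main obstacle is the homological reduction of the third paragraph: one must verify carefully that the $2$-skeleton hypothesis genuinely forces the antistar $B$ to be acyclic, so that $\chi(\D)=2$. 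Once $\chi(\D)=2$ is secured, the Dehn--Sommerville bookkeeping closes the argument at once.
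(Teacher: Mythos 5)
Your proposal is correct, but it takes a genuinely different route from the paper's. The paper proves this lemma by structural induction: after the same reduction to $\D$ with $Skel_{2}(\D)=Skel_{2}(\st (t,\D))$, any interior $3$-simplex $\t$ of the antistar $\D[V(\D)\setminus\{t\}]$ gives a missing $4$-simplex $t\star\t$, and Proposition \ref{d-dim connected cum} (whose separation hypotheses are automatic here, since in a $4$-manifold all vertex links are triangulated $3$-spheres and embedded $2$-spheres separate them) splits $\D$ as a connected sum with $t$ an identified vertex; handle addition is excluded by $g_2$- and $g_3$-optimality, both summands inherit optimality by Proposition \ref{Proposition invariant}, and induction on $g_2$ and the vertex count terminates at boundary complexes of $5$-simplices, exhibiting $\D$ as stacked. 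You instead prove outright that $g_2(\D)=0$ and never decompose: the $2$-skeleton equality kills the relative chains $C_j(B,\p B;\mathbb{Z}_2)$ of the antistar $B$ for $j\le 2$, chain comparison gives $H_1(B;\mathbb{Z}_2)=H_1(L;\mathbb{Z}_2)=0$, Lefschetz duality (valid since $|B|$ is a compact topological $4$-manifold with boundary $|L|\cong S^3$, vertex links being triangulated $3$-spheres) gives $H_k(B;\mathbb{Z}_2)=0$ for $k\ge 2$, so $\chi(\D)=2$; then the semi-Eulerian Klee--Dehn--Sommerville relation $g_3(\D)=10(\chi(\D)-2)$ and the $3$-sphere relation $g_3(L)=-g_2(L)$, combined with $g_i(\D)=g_i(L)$ for $i=2,3$ (which does follow from $f_i(\D)=f_i(\st (t,\D))$ for $i\le 2$, as you claim), force $g_2(\D)=0$, and Kalai--Tay finishes. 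I verified the bookkeeping: the identities $2f_3=5f_4$ and $3f_2=2f_1+5f_4$ hold for closed triangulated $4$-manifolds (the latter from $\chi(\lk (e,\D))=2$ over all edges $e$), and together with $\chi=\sum_i(-1)^if_i$ they yield exactly $g_3=10(\chi-2)$. What each approach buys: yours is shorter, bypasses the two-sidedness and separation machinery of Section 3 entirely, and delivers the sharper quantitative conclusion $g_2(K)=0$ directly, at the cost of importing topological duality; the paper's connected-sum induction is purely combinatorial in spirit (though it leaves the manifold-case separation facts implicit) and runs in exact parallel with the inductions used in Theorems \ref{dim 4 1-sing main}--\ref{dim 4 2-sing main2}, which is presumably why it was chosen there.
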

\begin{proof}
Let $K$ be $g_2$- and $g_3$-optimal with respect to a vertex $t$. Using Corollary \ref{Cor 2-skeletn same}, we obtain a 4-manifold $\D$ and a vertex $t$ in $\D$ such that $Skel_{2}(\D)=Skel_{2}(\st (t,\D))$, and $K$ is obtained from $\D$ through finitely many facet subdivisions. It suffices to prove that $\D$ is a stacked sphere.

Let $\D'$ be the induced complex on $V(\D)\setminus \{t\}$. Then $\D'$ is a manifold with boundary. If $\D'$ does not contain an interior $3$-simplex, then $\D$ is the boundary complex of a 5-simplex. Now, assume that $\t$ is an interior $3$-simplex in $\D'$. Since every 2-simplex of $\D$ is also in $\st (t,\D)$, it follows that $\p\t\subseteq \st (t,\D)$. Thus, $t\star\t$ is a missing 4-simplex in $\D$. Therefore, by Proposition \ref{d-dim connected cum}, we have $\D=\D_1\#_\psi\D_2$, where $t$ is an identified vertex during the connected sum of $\D_1$ and $\D_2$, representing $t_1$ of $\D_1$ and $t_2$ of $\D_2$. Furthermore, both $\D_1$ and $\D_2$ are $g_2$- and $g_3$-optimal with respect to $t_1$ and $\psi(t_1)$, respectively. Note that either both $\D_1$ and $\D_2$ have a smaller $g_2$ than $\D$, or one of $\D_1$ and $\D_2$ has the same $g_2$ as $\D$ while the other one is a stacked sphere. 

Thus, every $4$-manifold $\D$ such that $Skel_{2}(\D)=Skel_{2}(\st (t,\D))$, where $\D[V(\D)\setminus\{t\}]$ contains an interior 3-simplex, can be written as a connected sum of two $g_2$- and $g_3$-optimal 4-manifolds, each having either a smaller $g_2$ value or fewer vertices than $\D$. Now, we proceed with these 4-manifolds. Since the number of vertices in $\D$ is finite, after a finite number of steps we obtain a $4$-manifold $\D''$ and a vertex $t''$ in $\D''$ such that $Skel_{2}(\D'')=Skel_{2}(\st (t'',\D''))$, where $\D[V(\D'')\setminus\{t''\}]$ contains no interior 3-simplex. Hence, $\D''$ is the boundary complex of a $5$-simplex. This implies that $\D$ is obtained by a sequence of connected sum operations from a collection of boundary complexes of 5-simplices. Hence $\D$ is a stacked sphere. This completes the proof.
\end{proof}

We now have the necessary tools to establish one of our main results, Theorem \ref{dim 4 1-sing main}.

\bigskip

\noindent {\em Proof of Theorem} \ref{dim 4 1-sing main}. 
Using Lemma \ref{2-skeletn same}, we have a normal pseudomanifold $\D$ and a vertex $t$ in $\D$ such that $Skel_{2}(\D)=Skel_{2}(\st (t,\D))$ and $K$ is obtained from $\D$ by a finite number of facet subdivisions. Furthermore, $t$ is a singular vertex of $\D$. We will use induction on $g_2$ and the number of vertices in $\D$, as needed. 

The base case is the minimal triangulation of a 4-handle with the boundary coned off. Consider a minimal triangulation of a 4-stacked ball along with an admissible bijection between a pair of boundary 3-simplices, and identify such a pair of 3-simplices in the boundary. This yields a manifold with boundary, where the boundary complex forms a 3-handle. Finally, take the cone over the boundary from an external vertex.

For the induction step, let $\D'$ be the induced complex on $V(\D)\setminus \{t\}$. Then $\D'$ is a normal pseudomanifold with boundary. Let $\t$ be an interior $3$-simplex in $\D'$. Since $Skel_{2}(\D)=Skel_{2}(\st (t,\D))$, in particular, every 2-simplex of $\D'$ is in the boundary $\p\D'$. Therefore, $t \star \tau$ is a missing 4-simplex in $\D$.

Using Lemma \ref{4-dim vertex folding}, we conclude that $\D$ was formed using either a connected sum or vertex folding at the vertex $t$ from normal $4$-pseudomanifolds. If $\D$ was formed by a connected sum of two normal $4$-pseudomanifolds, say $\D_1$ and $\D_2$, then $g_i(\D)=g_i(\D_1)+g_i(\D_2)$ for $i=2,3$. Moreover, Proposition \ref{Proposition invariant} implies that $\D_1$ and $\D_2$ are $g_2$- and $g_3$-optimal with respect to $t$. Since the number of vertices in each of $\D_1$ and $\D_2$ is less than that in $\D$, the induction hypothesis applies. On the other hand, if $\D$ was obtained by a vertex folding from a normal $4$-pseudomanifold $\D_3$ at the vertex $t$, then by Proposition \ref{Proposition invariant}, $\D_3$ is $g_2$- and $g_3$-optimal with respect to $t$. Furthermore, $g_2(\D_3)=g_2(\D)-10$. Thus, the induction hypothesis applies to the complex $\D_3$ as well.

In each intermediate step of the induction, we apply a vertex unfolding, provided the vertex $t$ remains a singular vertex in the corresponding complex. If at any stage the vertex $t$ becomes nonsingular, then the resulting complex is a $g_2$- and $g_3$-optimal $4$-manifold. Hence, by Lemma \ref{4-manifold}, it must be a stacked sphere. This completes the proof.
\hfill $\Box$

\medskip

\begin{Lemma}\label{one vertex suspension g2-minimal}
Let $M$ be a normal $3$-pseudomanifold and $\D$ be a one-vertex suspension of $M$ at a graph cone point, say $t$. If $g_3(\D)=g_3(\lk (t,\D))$, then $g_2(M)=g_2(\lk (t,M))$. 
\end{Lemma}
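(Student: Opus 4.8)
The plan is to reduce the statement to a single $f$-vector identity that expresses $g_3(\D)$ in terms of the $g$-numbers of $M$ and of $L:=\lk (t,M)$. Writing the suspension as $\D=\Sigma_{t,u}M=(t\star A)\cup(u\star M)$, where $A=\{\tau\in M:t\nleq\tau\}$ is the subcomplex of faces of $M$ avoiding $t$ and $u$ is the new apex, the first thing I would record is the structural fact $\lk (t,\D)\cong M$. To see this, I would partition the faces of $\D$ by their intersection with $\{t,u\}$ into the four disjoint families $A$, $\{t\alpha:\alpha\in A\}$, $\{u\alpha:\alpha\in A\}$, and $\{ut\beta:\beta\in L\}$; reading off which of these survive in the link gives $\lk (t,\D)=A\cup(u\star L)$. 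Since $M=A\cup(t\star L)$, relabelling $u\mapsto t$ is an isomorphism $\lk (t,\D)\cong M$, so in particular $g_3(\lk (t,\D))=g_3(M)$ and the hypothesis $g_3(\D)=g_3(\lk (t,\D))$ becomes simply $g_3(\D)=g_3(M)$.

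Next I would compute the $f$-vector of the one-vertex suspension from the same four-family decomposition, counting $i$-faces in each family (using $f_{-1}=1$, $f_{-2}=0$); this yields $f_i(\D)=f_i(M)+2f_{i-1}(M)-f_{i-1}(L)-f_{i-2}(L)$. Substituting $f_0(\D),f_1(\D),f_2(\D)$ into $g_3(\D)=f_2(\D)-4f_1(\D)+10f_0(\D)-20$ and simplifying, the terms involving $L$ collect exactly into $-\big(f_1(L)-3f_0(L)+6\big)=-g_2(L)$, while the remaining terms collect into $f_2(M)-2f_1(M)+2f_0(M)$. Because for the three-dimensional complex $M$ one has the clean identity $g_2(M)+g_3(M)=f_2(M)-2f_1(M)+2f_0(M)$, this produces
\[ g_3(\D)=g_2(M)+g_3(M)-g_2(\lk (t,M)). \]

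Finally, combining this identity with the reduced hypothesis $g_3(\D)=g_3(M)$ cancels $g_3(M)$ and leaves exactly $g_2(M)=g_2(\lk (t,M))$, which is the assertion. I expect the only delicate point to be bookkeeping: correctly identifying $\lk (t,\D)$ with $M$ and carrying all constants faithfully through the $g_3$ computation, for which testing the formulas on $M=\partial\Delta^4$, $\D=\partial\Delta^5$ (all $g$-numbers zero) is a useful sanity check. It is worth remarking that the graph-cone-point hypothesis on $t$ does not enter the algebra above; it serves to pin down the role of $t$ within the surrounding classification (and is automatically satisfied, since the suspension vertex becomes adjacent to every other vertex of $\D$), so the real content of the proof is the $f$-vector computation and the resulting identity for $g_3(\D)$.
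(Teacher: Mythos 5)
Your proof is correct, and its skeleton is the same as the paper's: identify $\lk (t,\D)\cong M$ so the hypothesis reads $g_3(\D)=g_3(M)$, count the faces of the one-vertex suspension, and expand $g_3(\D)$; your face count $f_i(\D)=f_i(M)+2f_{i-1}(M)-f_{i-1}(L)-f_{i-2}(L)$ (with $L=\lk (t,M)$) and the resulting identity $g_3(\D)=g_2(M)+g_3(M)-g_2(L)$ both check out. The one genuine difference is logical economy: the paper uses the graph-cone-point hypothesis twice, first to write $f_1(\D)=f_1(M)+f_0(M)$ (which is your general formula $f_1(\D)=f_1(M)+2f_0(M)-f_0(L)-1$ after substituting $f_0(L)=f_0(M)-1$), and again at the end to pass from $f_1(M)=f_1(\st (t,M))$ to $g_2(M)=g_2(\st (t,M))$ via $f_0(M)=f_0(\st (t,M))$; your bookkeeping avoids both substitutions, so you actually prove the lemma for an arbitrary one-vertex suspension, a mild but real strengthening of the stated result. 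One caveat: your closing parenthetical, that the cone-point hypothesis ``is automatically satisfied,'' is wrong as stated. What is automatic is that $t$ is a cone point of the graph of $\D$ (it is joined to every vertex of $A$ and, via the faces $ut\beta$, to $u$); the hypothesis as the paper uses it (``Since $t$ is a graph cone point of $M$\dots'') is that $t$ is a cone point of the graph of $M$, which can fail --- for instance, an apex of the stacked sphere $\partial\Delta^4 \,\#\, \partial\Delta^4$ is not adjacent to the opposite apex. Since your argument never invokes the hypothesis, the slip is harmless to your proof, but the remark should be corrected or deleted.
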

\begin{proof}
We prove that $f_1(M)=f_1(\st (t,M))$. Note that $\lk (t,\D)$ is isomorphic to $M$. Therefore, by the hypothesis, $g_3(\D)=g_3(M)$. Since $t$ is a graph cone point of $M$ and $\D$ is a one-vertex suspension of $M$ at $t$, the number of vertices, edges, and triangles in $\D$ in terms of those in $M$ are given by $f_0(\D)=f_0(M)+1$, $f_1(\D)=f_1(M)+f_0(M)$, and $f_2(\D)=f_2(M)+f_1(M)+|\{uv:uv\notin \st (t,M)\}|=f_2(M)+2f_1(M)-f_1(\st (t,M))$. Therefore,
 \begin{eqnarray*}
g_3(\D)&=& f_2(\D)-4f_1(\D)+10f_0(\D)-20\\
&=& f_2(M)+2f_1(M)-f_1(\st (t,M))-4f_1(M)-4f_0(M)+10f_0(M)+10-20\\
&=& [f_2(M)-3f_1(M)+6f_0(M)-10]+f_1(M)-f_1(\st (t,M))\\
&=& g_3(M)+f_1(M)-f_1(\st (t,M)).
\end{eqnarray*}
Since $g_3(\D)=g_3(M)$, it follows from the above relation that $f_1(M)=f_1(\st (t,M))$. Since $f_0(M)=f_0(\st (t,M))$, it follows that, $g_2(M)=g_2(\st (t,M))=g_2(\lk (t,M))$. This completes the proof.
\end{proof}

\begin{Lemma}\label{4 dim npm up to one-vertex suspension}
Let $K$ be a normal $4$-pseudomanifold with exactly two singularities, and suppose that $K$ is $g_2$- and $g_3$-optimal with respect to a singular vertex. Then, $K$ is obtained from the one-vertex suspension of a $g_2$-minimal normal $3$-pseudomanifold $M$ with exactly one singularity through a sequence of vertex foldings and connected sum operations. 
\end{Lemma}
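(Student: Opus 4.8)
The plan is to run an induction that mirrors the proof of Theorem \ref{dim 4 1-sing main}, while carefully tracking the second singular vertex. First I would invoke Lemma \ref{2-skeletn same} to replace $K$ by a normal $4$-pseudomanifold $\D$ that contains $t$ and satisfies $Skel_2(\D)=Skel_2(\st(t,\D))$, with $K$ recovered from $\D$ by (possibly zero) facet subdivisions. Since facet subdivisions neither create singularities nor alter $g_2$- and $g_3$-optimality (Proposition \ref{Proposition invariant}$(ii)$), it suffices to establish the conclusion for $\D$, which still has exactly two singular vertices $t,s$ and is $g_2$- and $g_3$-optimal with respect to $t$. The induction is carried out on the pair $(g_2(\D),f_0(\D))$.

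For the inductive step, put $\D'=\D[V(\D)\setminus\{t\}]$; this is a normal $4$-pseudomanifold with boundary $\lk(t,\D)$, and every triangle of $\D'$ lies on that boundary because $Skel_2(\D)=Skel_2(\st(t,\D))$. Suppose $\D'$ has an interior $3$-simplex $\t$ with $s\nleq\t$. Then $\p\t\subseteq\lk(t,\D)$, so $\s:=t\star\t$ is a missing facet whose only singular vertex is $t$; for each of the four (non-singular) vertices $x$ of $\t$ the $2$-sphere $\p(\s-x)$ sits inside the $3$-sphere $\lk(x,\D)$ and therefore separates it. If $\p\t$ separates $\lk(t,\D)$, then Proposition \ref{d-dim connected cum} together with Proposition \ref{Proposition invariant}$(iv)$ (which forbids handle additions on optimal complexes) shows $\D=\D_1\#\D_2$; the summand containing $s$ inherits both singularities and the optimality at $t$, whereas the other summand carries at most the singularity $t$ and is handled by Theorem \ref{dim 4 1-sing main} or by Lemma \ref{4-manifold}. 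If $\p\t$ does not separate $\lk(t,\D)$, then Lemma \ref{4-dim vertex folding} presents $\D$ as a vertex folding at $t$ of a complex $\D_3$ with $g_2(\D_3)=g_2(\D)-10$; here $\D_3$ still has exactly two singular vertices, since $\lk(t,\D_3)$ continues to carry the singularity at $s$ and hence remains a non-sphere. In both cases the induction hypothesis applies.

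The base case occurs when every interior $3$-simplex of $\D'$ contains $s$. I would first show that $\D'$ is a cone with apex $s$: a facet of $\D'$ avoiding $s$ would have all five of its tetrahedral faces on $\p\D'$ (none is interior, since interior $3$-simplices contain $s$), so it would share no tetrahedron with another facet; as $t$ is singular, $\D'$ is not a single simplex, and this contradicts the strong connectivity of $\D'$. Hence every facet of $\D'$ contains $s$, so $\D'=s\star\{\gamma\in M:s\nleq\gamma\}$ with $M:=\lk(t,\D)$, and therefore $\D=(t\star M)\cup\big(s\star\{\gamma\in M:s\nleq\gamma\}\big)=\sum_{s,t}M$ is a one-vertex suspension of $M$.

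Finally I would identify $M$. For a vertex $w\neq s$ of $M$ one has $\lk(w,M)=\lk(tw,\D)=\lk(t,\lk(w,\D))$, a $2$-sphere because $\lk(w,\D)$ is a $3$-sphere; since $t$ is singular, $M$ is not a $3$-sphere, so $s$ is the unique singular vertex of $M$. That $s$ is a graph cone point of $M$ follows from $Skel_2(\D)=Skel_2(\st(t,\D))$: were $w$ an interior vertex of $\{\gamma\in M:s\nleq\gamma\}$, the edge $sw$ would satisfy $sw\notin M$, yet every triangle $swz$ lies in $\st(t,\D)$, forcing either $z=t$ (whence $tsw\in\D$, i.e. $sw\in M$) or $swz\in M$ (again $sw\in M$) --- a contradiction. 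Lastly, $g_3(\D)=g_3(\lk(t,\D))=g_3(M)=g_3(\lk(s,\D))$, so Lemma \ref{one vertex suspension g2-minimal} yields $g_2(M)=g_2(\lk(s,M))$, i.e. $M$ is $g_2$-minimal. Reassembling the base case with the connected-sum and vertex-folding reductions then expresses $K$ in the desired form. The crux, and the step I expect to be hardest, is precisely this base case: proving that $\D'$ is a genuine cone over $s$ and that the resulting suspension vertex is a graph cone point of a $g_2$-minimal $M$ with a single singularity, together with the bookkeeping that identifies which connected-sum summand inherits the second singularity.
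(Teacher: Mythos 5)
Your proposal follows the paper's proof in all essentials: reduce via Lemma \ref{2-skeletn same} to $\D$ with $Skel_2(\D)=Skel_2(\st (t,\D))$, peel off connected sums and vertex foldings along missing facets $t\star\t$ using Proposition \ref{d-dim connected cum}, Proposition \ref{Proposition invariant} and Lemma \ref{4-dim vertex folding}, and terminate in a one-vertex suspension to which Lemma \ref{one vertex suspension g2-minimal} is applied. Your base case is organized slightly differently (the paper's Step 2 works with $3$-simplices of $\lk (t_1,\D)$ not in $\lk (t,\D)$ and deduces the suspension structure from the links of $t$ and $t_1$ coinciding, whereas you use interior $3$-simplices of the antistar avoiding $s$ and derive the cone structure from strong connectedness), and your cone argument is a legitimate, in fact more explicit, route to $\D=\sum_{s,t}M$. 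However, there is a genuine gap at the point you yourself flag as the crux: the inference ``since $t$ is singular, $M$ is not a $3$-sphere, so $s$ is the unique singular vertex of $M$'' is a non sequitur. Showing that every vertex $w\neq s$ of $M$ has a $2$-sphere link only tells you that any singularity of $M$ sits at $s$; if $s$ were also nonsingular, $M$ would be a closed $3$-manifold, and a non-sphere $3$-manifold is perfectly compatible with $t$ being singular in $\D$ (if $M$ triangulates, say, $T^3$, then both vertices of the one-vertex suspension $\sum_{s,t}M$ are singular with manifold links). So the singularity of $t$ alone does not force $s$ to be singular in $M$, and ``exactly one singularity'' --- part of the lemma's conclusion --- is left unproved. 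The paper closes precisely this hole with a $g_2$-argument: by Lemma \ref{one vertex suspension g2-minimal}, $g_2(M)=g_2(\lk (s,M))$; if $s$ were nonsingular, $\lk (s,M)$ would be a $2$-sphere, giving $g_2(M)=0$, whence $M$ is a stacked sphere (Kalai--Tay), contradicting the singularity of $t$. Since you derive $g_2(M)=g_2(\lk (s,M))$ at the end anyway, the repair costs two lines, but as written the step fails.

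A secondary, smaller issue is the singularity bookkeeping in your inductive step. You assert that the connected summand containing $s$ ``inherits both singularities,'' and that after vertex unfolding ``$\D_3$ still has exactly two singular vertices'' because $\lk (t,\D_3)$ remains a non-sphere. Neither is automatic: $\lk (t,\D)$ decomposes as $\lk (t,\D_1)\,\#\,\lk (t,\D_2)$, and the non-sphere part can lie entirely in the summand \emph{not} containing $s$; likewise $\lk (t,\D_3)$ can be a sphere, with the singularity of $\lk (t,\D)=\lk (t,\D_3)^{\psi}$ created wholly by the handle addition. In those scenarios you obtain a complex with a single singular vertex $s$ that is $g_2$- and $g_3$-optimal with respect to the \emph{nonsingular} vertex $t$, a case covered neither by your induction hypothesis nor by Theorem \ref{dim 4 1-sing main} nor by Lemma \ref{4-manifold}. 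To be fair, the paper's Step 2 (``rename each of the resultant complexes as $K$ and move to Step 1'') is equally silent on this possibility, so here you match rather than fall below the paper's level of rigor; but since you state these claims as facts, they need either an argument ruling the scenario out or a weakened loop invariant (optimality at $t$ with $s$ singular and $t$ possibly not) that is disposed of at the base case by the same $g_2$-argument as above.
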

\begin{proof}
Let $t$ and $t_1$ be two singular vertices in $K$, and assume that $K$ is $g_2$- and $g_3$-optimal with respect to $t$. We will prove the result through a step-by-step argument.

\begin{enumerate}
    \item [Step 1:] Using Lemma \ref{2-skeletn same}, we obtain a normal pseudomanifold $\D$ that contains the singular vertices $t$ and $t_1$ such that $Skel_{2}(\D)=Skel_{2}(\st (t,\D))$, where $K$ is obtained from $\D$ by a finite number of facet subdivisions. Moreover, $t$ and $t_1$ are the only singular vertices in $\D$. 

    \item[Step 2:] Let $\lk (t_1,\D)$ contain a $3$-simplex $xyzw$ that does not belong to $\lk (t,\D)$, where $t$ is not a vertex of $xyzw$. Since $Skel_{2}(\D)=Skel_{2}(\st (t,\D))$, we have $txyzw$ as a  missing $4$-simplex in $\D$. Therefore, by Proposition \ref{d-dim connected cum} and Lemma \ref{4-dim vertex folding}, $\D$ comes from $g_2$- and $g_3$-optimal normal $4$-pseudomanifolds by either a vertex folding or a connected sum operation, where $txyzw$ is an identified facet. We then rename each of the resultant complexes as $K$ and move to Step 1.

    \item[Step 3:] If every $3$-simplex in $\lk (t_1,\D)$ that does not contain $t$ is also in $\lk (t,\D)$, then the links of $t$ and $t_1$ are the same, since  $\D$ is closed and both $\lk (t,\D)$ and $\lk (t_1,\D)$ are closed subcomplexes of $\D$. Hence, $\D$ is a one-vertex suspension of the link of $t$ (resp. the link of $t_1$) at the vertex $t_1$ (resp. $t$). Without loss of generality, let $\D$ be a one-vertex suspension of $\lk (t,\D)$ at the vertex $t_1$. Now,  Lemma \ref{one vertex suspension g2-minimal} implies that $\lk (t,\D)$ is $g_2$ minimal with respect to $t_1$. Moreover, $\lk (t,\D)$ is a normal $3$-pseudomanifold with exactly one singularity at $t_1$. 
    
    Note that, if $t_1$ were not a singular vertex of $\lk (t,\D)$, then $\lk (t,\D)$ would be a $3$-manifold that is $g_2$-minimal, implying that $g_2(\lk (t,\D))=g_2(\lk (t_{1},\D))=0$. Consequently, $g_2(\D)=0$, and hence $\D$ is a stacked sphere. Since an application of Step 2 does not involve $t_1$, it follows that $t_1$ is a non-singular vertex in $K$, which contradicts the fact that $K$ contains two singular vertices.  
\end{enumerate}

Thus, we conclude that $\D$ is obtained by a sequence of vertex foldings and connected sums from a $g_2$- and $g_3$-optimal normal $4$-pseudomanifold $M$. Moreover, $M$ is the one-vertex suspension of a $g_2$-minimal normal $3$-pseudomanifold with exactly one singularity.
\end{proof}

We are now equipped with the essential tools to prove our main results, Theorem \ref{dim 4 2-sing main1} and Theorem \ref{dim 4 2-sing main2}.

\bigskip

\noindent {\em Proof of Theorem} \ref{dim 4 2-sing main1}.
It follows from Lemma \ref{4 dim npm up to one-vertex suspension} that $K$ is obtained from the one-vertex suspension of a $g_2$-minimal normal $3$-pseudomanifold $M$ with at most one singularity through a sequence of operations of types vertex foldings and connected sums. Since $M$ is a $g_2$-minimal normal $3$-pseudomanifold with at most one singularity, Proposition \ref{3-dim npm two singularity} implies that $M$ is obtained from some boundary complexes of $4$-simplices by a sequence of operations consisting of vertex foldings and connected sums.
\hfill $\Box$
%--------------------------------------------------------------
\bigskip

\noindent {\em Proof of Theorem} \ref{dim 4 2-sing main2}. Let $t$ and $t_1$ be two singular vertices in $K$, and assume that $K$ is $g_2$- and $g_3$-optimal with respect to $t$. By Lemma \ref{4 dim npm up to one-vertex suspension}, there exists a normal $4$-pseudomanifold $\D'$ with exactly two singular vertices, $t$ and $t_1$, such that $K$ is obtained from $\D'$ by a sequence of vertex folding and connected sum operations. Moreover, $\D'$ is $g_2$- and $g_3$-optimal \wrt both $t$ and $t_1$, and it is the one-vertex suspension of $\lk (t,\D')$ at the vertex $t_1$ (resp. $\lk (t_1,\D')$ at the vertex $t$). By Lemma \ref{one vertex suspension g2-minimal}, it follows that $\lk (t,\D')$ is $g_2$-minimal \wrt $t_1$. Now, we describe the combinatorial structure of $\D'$ in terms of connected sums and edge foldings.

\begin{enumerate}
    \item [Step 1:] Using Lemma \ref{2-skeletn same}, we obtain a normal pseudomanifold $\D$ that contains the singular vertices $t$ and $t_1$ such that $Skel_{2}(\D)=Skel_{2}(\st (t,\D))$, where $\D'$ is obtained from $\D$ by a finite number of facet subdivisions. Moreover, $t$ and $t_1$ are the only two singular vertices in $\D$, and $\lk (t,\D')$ is $g_2$-minimal \wrt $t_1$ implies that $\lk (t,\D)$ is $g_2$-minimal \wrt $t_1$. Let $\t$ be a $3$-simplex in the interior of $\D[V(\D)\setminus \{t\}]$. Since $Skel_{2}(\D)=Skel_{2}(\st (t,\D))$, $t\star\t$ is a missing $4$-simplex in $\D$. Since $\D$ has exactly two singular vertices, $\t$ contains at most one singular vertex. 

    We first check if there is an interior $3$-simplex $\t$ in $\D[V(\D)\setminus \{t\}]$ that does not contain a singular vertex. If there is any such $\t$, then we move to Step 2; otherwise, we move to Step 3.
    
   \item [Step 2:] Let $V(\t)$ not contain $t_1$. Then, by Proposition \ref{d-dim connected cum} and Lemma \ref{4-dim vertex folding}, $\D$ comes from $g_2$- and $g_3$-optimal normal $4$-pseudomanifolds by either a vertex folding at $t$ or a connected sum where $t\star\t$ is the image of identified facets. If $\D$ is obtained by a vertex folding of $\D_1$ at the vertex $t$, then $g_2(\lk (t_1,\D_1))> g_2(\D_1)$, which is a contradiction. Hence, vertex folding is not possible. Therefore, $\D$ is the connected sum of two $g_2$- and $g_3$-optimal normal $4$-pseudomanifolds, say $\D_2$ and $\D_3$. Moreover, since $tt_1$ is an edge in $\D$, one of $\D_2$ or $\D_3$ must be a stacked sphere. Let $\D_2$ be the stacked sphere. Now, we rename $\D_3$ as $\D'$ and move to Step 1.

    \item [Step 3:] Let $V(\t)$ contain $t_1$. Then, the missing $4$-simplex $t\star\t$ contains two singular vertices, $t$ and $t_1$. If for at least one $y\in\{t,t_1\}$, the boundary $\p(t\star\t - y)$ separates $\lk (y,\D)$ into two parts, then by the same arguments as in Step 2, $\D$ is the connected sum of two $g_2$- and $g_3$-optimal normal $4$-pseudomanifolds. We then move to Step 1 with the resultant normal $4$-pseudomanifold that contains both singular vertices by renaming it as $\D'$. 
    
Now, assume that for each $y\in\{t,t_1\}$, the boundary $\p(t\star\t - y)$ does not separate $\lk (y,\D)$ into two parts. If, furthermore, $\lk (tt_1,\D)$ is separated into two parts by $\p(\t-t_1)$, then from Proposition \ref{d-dim connected cum} it follows that $\lk (t,\D)$ is obtained by a handle addition from a normal $3$-pseudomanifold; otherwise, if $\lk (t,\D)$ were a connected sum, then it could have been separated into two parts by $\p(t\star\t - y)$, a contradiction to the assumption. Now, by Proposition \ref{handle addition does not have g2-minimal}, a normal $3$-pseudomanifold obtained by a handle addition cannot be $g_2$-minimal, contradicting the fact that $\lk (t,\D)$ is a $g_2$-minimal normal $3$-pseudomanifold \wrt $t_1$. Therefore, $\lk (tt_1,\D)$ is not separated into two parts by $\p(\t-t_1)$.
    
    Since for each $y\in\{t,t_1\}$, $\p(t\star\t - y)$ does not separate $\lk (y,\D)$ into two parts, and $\lk (tt_1,\D)$ is not separated into two parts by $\p(\t-t_1)$, it follows from Lemma \ref{4-dim edge folding} that $\D$ is obtained from a $g_2$- and $g_3$-optimal normal $4$-pseudomanifold $\D_4$ by an edge folding along the edge $tt_1$. If $t$ (and hence $t_1$) is a singular vertex in the resultant complex $\D_4$, then we rename $\D_4$ as $\D'$ and move to Step 1. Otherwise, we proceed to Step 4.

    \item [Step 4:] Let $t$ and $t_1$ are non-singular vertices in the normal $4$-pseudomanifold $\D_4$ obtained in Step 3. Note that, $\D_4$ is $g_2$-minimal \wrt $t$, and $\lk (t,\D_4)$ is $g_2$-minimal \wrt $t_1$. Therefore, $g_2(\D_4)=g_2(\lk (t,\D_4))=g_2(\lk (tt_1,\D_4))$. Since both of $t$ and $t_1$ are non-singular vertices in $\D_4$, and $tt_1$ is an edge, $\lk (tt_1,\D_4)$ must be a $2$-sphere. Hence $g_2(\lk (tt_1,\D_4))=0$. Therefore, $g_2(\D_4)=0$.
\end{enumerate} 

From Step 4, we obtain that $\D_4$ is a stacked sphere. Therefore, $\D_4$ is obtained from the boundary complexes of $5$-simplices by a sequence of connected sum operations. Hence, combining Steps 1 to 4, we conclude that $\D'$ is obtained from the boundary complexes of $5$-simplices by a sequence of operations consisting of connected sums and edge foldings. This completes the proof. 
\hfill $\Box$

\medskip
From the constructions in Lemma \ref{4 dim npm up to one-vertex suspension}, and Theorem \ref{dim 4 2-sing main2}, we obtain a topological characterization of these normal pseudomanifolds.

\begin{Corollary}
Let \( K \) be a normal \( 4 \)-pseudomanifold with exactly two singular vertices, and assume that \( K \) is both \( g_2 \)- and \( g_3 \)-optimal with respect to a singular vertex. Then, $g_2$ satisfies \( g_2(K) = 6m + 10n \), where \( m \in \mathbb{N} \) and \( n \in \mathbb{N} \cup \{0\} \).  Furthermore, the geometric carrier \( |K| \) is PL homeomorphic to the following construction:  
\begin{itemize}
\item Start with a $3$-dimensional handlebody with exactly \( m \) handles and take its boundary coned off.  
\item Next, consider the suspension of this topological object.  
\item Finally, attach \( n \) number of $1$-handles $(\mathbb{D}_1\times \mathbb{D}_3$  that is attached along its boundary $\partial(\mathbb{D}_1)\times \mathbb{D}_3)$ to one of the boundary components and cone off both boundary components.
\end{itemize}
\end{Corollary}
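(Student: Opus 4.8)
The plan is to convert the two combinatorial decompositions already in hand into a PL picture and then read off both the value of $g_2$ and the homeomorphism type of $|K|$ operation by operation. First I would record the numerical statement. By Theorem~\ref{dim 4 2-sing main2}, $K$ is assembled from boundary complexes of $5$-simplices (each with $g_2=0$) by vertex foldings, edge foldings, and connected sums; by Proposition~\ref{Proposition invariant}$(iii)$ every summand other than the one carrying the two singular vertices is a stacked sphere, so the connected sums contribute nothing to $g_2$ and are PL-trivial (they are connected sums with PL $4$-spheres). Using (\ref{folding g2}) and (\ref{edge folding g2}), each vertex folding raises $g_2$ by $\binom{5}{2}=10$ and each edge folding raises it by $\binom{4}{2}=6$. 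Writing $n$ for the number of vertex foldings and $m$ for the number of edge foldings gives $g_2(K)=6m+10n$ at once, with $n\geq 0$; that $m\geq 1$ will follow from the topology below.

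Next I would identify the building blocks by passing to the decomposition of Lemma~\ref{4 dim npm up to one-vertex suspension}: there $K$ is obtained, again up to facet subdivisions and connected sums with stacked spheres (which leave $|K|$ unchanged), by a sequence of $n$ vertex foldings applied to the one-vertex suspension $\D'$ of a $g_2$-minimal normal $3$-pseudomanifold $M$ with a single singularity. By Proposition~\ref{3-dim npm two singularity}, $M$ is built from boundary complexes of $4$-simplices by $m$ three-dimensional vertex foldings; the cumulative effect of these foldings is to turn the surface link of the singular vertex into a genus-$m$ surface whose star-complement is the minimal, i.e.\ handlebody, filling, so $|M|$ is PL-homeomorphic to a genus-$m$ handlebody with its boundary coned off and $g_2(M)=6m$. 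This is the first bullet. The one-vertex suspension is PL-homeomorphic to the ordinary suspension \cite{BagchiDatta98}, so $|\D'|\cong\Sigma|M|$ is the second bullet. Deleting small open neighbourhoods of the two suspension points $t$ and $t_1$ exhibits $|\D'|$, away from the poles, as a collar $|M|\times[0,1]$ whose two boundary components are copies of $|M|$, capped off by the two cones. Moreover the identity $g_2(\lk(t,K))=g_2(K)=6m+10n$ together with $g_2(\lk(t,\D'))=g_2(M)=6m$ forces all $n$ handle additions to accumulate on $\lk(t,\cdot)$, that is, on the single boundary component $|M|\times\{1\}$.

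The heart of the proof, and the step I expect to be the main obstacle, is to show that each four-dimensional vertex folding at $t$ is realized by attaching a single $1$-handle $\mathbb{D}_1\times\mathbb{D}_3$ along $\partial\mathbb{D}_1\times\mathbb{D}_3$ to the boundary component $|M|\times\{1\}$, followed by re-coning. The mechanism is Lemma~\ref{4-dim vertex folding}: a vertex folding at $t$ is precisely the operation whose effect on $\lk(t,\cdot)$ is a three-dimensional handle addition $S\mapsto S^{\psi}$. On the collar this handle addition is realized by deleting two boundary $3$-balls from $|M|\times\{1\}$, gluing in the tube $\mathbb{D}_1\times\partial\mathbb{D}_3=\mathbb{D}_1\times S^2$, and capping, which is exactly the boundary effect of attaching a four-dimensional $1$-handle. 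The real difficulty is the coherence of these local surgeries: one must check that $n$ successive foldings at $t$ produce $n$ disjoint $1$-handles on the same boundary component, that they do not disturb the opposite collar boundary $|M|\times\{0\}$, and that after all the attachments both boundary components can be coned off simultaneously. Granting this, coning off the modified component produces the vertex $t$ with $\lk(t,K)$ equal to $|M|$ with $n$ handles added, so $g_2(\lk(t,K))=6m+10n=g_2(K)$, confirming optimality at $t$, while coning off $|M|\times\{0\}$ produces $t_1$ with link $|M|$; this yields the third bullet. Finally $m\geq 1$ is forced, since $m=0$ would make $M$ a $3$-sphere, $|\D'|$ a $4$-sphere, and $t_1$ a nonsingular vertex, contradicting that $K$ has exactly two singularities. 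Assembling the three bullets completes the argument.
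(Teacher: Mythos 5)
Your proposal is correct and follows essentially the same route as the paper, which proves this corollary simply by reading the topology and the $g_2$-count off the constructions in Lemma~\ref{4 dim npm up to one-vertex suspension}, Proposition~\ref{3-dim npm two singularity}, and Theorem~\ref{dim 4 2-sing main2}: edge foldings (each worth $6$) build the genus-$m$ handlebody-with-coned-boundary whose one-vertex suspension is PL the suspension, and vertex foldings at $t$ (each worth $10$) are handle additions on $\lk(t,\cdot)$, i.e.\ $1$-handles $\mathbb{D}_1\times\mathbb{D}_3$ on one boundary component, giving $g_2(K)=6m+10n$ with $m\geq 1$ forced by the singularity of $t_1$. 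Your blanket claim in the first paragraph that all other connected summands are stacked spheres is slightly off (a summand joined at the identified vertex $t$ may itself be a folded sphere, by Proposition~\ref{Proposition invariant}$(iii)$), but this is harmless since $g_2$ is additive under connected sums and such a summand's foldings are absorbed into $n$ as further $1$-handles, exactly as in your later accounting.
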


\section{Future Directions}
In this work, we provide a combinatorial characterization of normal \(4\)-pseudomanifolds \(K\) with exactly two singular vertices, assuming that \(K\) is \(g_2\)- and \(g_3\)-optimal with respect to a singular vertex. We establish that such a complex \(K\) can be obtained from certain boundary complexes of \(5\)-simplices through a sequence of operations involving vertex foldings, edge foldings, and connected sums.  This result naturally leads to the following open questions:  

\begin{Question}
{\rm 
Can one provide a combinatorial characterization of normal \(4\)-pseudomanifolds \(K\) that are \(g_2\)- and \(g_3\)-optimal with respect to a singular vertex but have more than two singular vertices? }
\end{Question}

\begin{Question}
{\rm
Can a similar characterization be established for higher-dimensional normal pseudomanifolds? }
\end{Question}

Exploring these questions could lead to a deeper understanding of the combinatorial structure of pseudomanifolds and their topological properties.

\medskip

 \noindent {\bf Acknowledgement:} The first author is supported by the Mathematical Research Impact Centric Support (MATRICS) Research Grant (MTR/2022/000036) by SERB (India). The second author is supported by the institute fellowship at IIT Delhi, India.  The third author is supported by the Prime Minister's Research Fellows (PMRF/1401215) scheme.
 
\smallskip

\noindent {\bf Data availability:} The authors declare that all data supporting the findings of this study are available within the article.

\smallskip

\noindent {\bf Declarations}

\smallskip

\noindent {\bf Conflict of interest:} No potential conflict of interest was reported by the authors.

\end{document}